\def\cal{\mathcal}
\newtheorem{Theorem}{Theorem}[section]
\newtheorem{Proposition}{Proposition}[section]
\newtheorem{Lemma}{Lemma}[section]
\theoremstyle{definition}
\newtheorem{Definition}{Definition}[section]
\newtheorem{Remark}{Remark}
\newtheorem{Assumptions}{Hypothesis}[section]
\def\R{{\mathbb{R}}}
\def\vp{\varphi}
\def\ve{\varepsilon}
\def\fd{\mathfrak{d}}
\def\fg{\mathfrak{g}}
\def\fh{\mathfrak{h}}
\def\ds{\displaystyle}
\title {Carleman estimates for singular parabolic equations with interior degeneracy and non smooth coefficients}
\author{{\sc Genni Fragnelli}\thanks{Member of the Gruppo Nazionale per l'Analisi Matematica, la Probabilit\`a e le loro Applicazioni (GNAMPA) of the Istituto Nazionale di Alta Matematica (INdAM), and supported by the GNAMPA
project  2014 {\it Systems with irregular operators}.}\\
Dipartimento di Matematica\\ Universit\`{a} di Bari "Aldo Moro"\\
Via
E. Orabona 4\\ 70125 Bari - Italy\\ email: genni.fragnelli@uniba.it\\
{\sc Dimitri Mugnai }\thanks{Member of the Gruppo Nazionale per l'Analisi Matematica, la Probabilit\`a e le loro Applicazioni (GNAMPA) of the Istituto Nazionale di Alta Matematica (INdAM), and supported by the GNAMPA
project  2014 {\it Systems with irregular operators}. Research also supported by the M.I.U.R. project {\it Variational and perturbative aspects of nonlinear differential problems}.}\\
Dipartimento di Matematica e Informatica\\Universit\`a di
Perugia\\Via Vanvitelli 1, 06123 Perugia - Italy\\ email:
dimitri.mugnai@unipg.it}
\date{}
\begin{document}

\maketitle

\vspace{0.3cm}

\centerline{ {\it  }}

\begin{abstract}
We establish Carleman estimates for singular/degenerate parabolic Dirichlet problems with degeneracy and singularity occurring in the interior of the spatial domain. Our results are completely new, since this situation is not covered by previous contributions for degeneracy and singularity on the boundary. In addition, we consider non smooth coefficients, thus preventing the use of standard calculations in this framework.
\end{abstract}

\noindent Keywords: Carleman estimates, singular/degenerate equations, non smooth coef\-ficients, Hardy--Poincar\'e inequality, Caccioppoli inequality, observability ine\-quality, null controllability.

\noindent 2000AMS Subject Classification: 35Q93, 93B05, 93B07, 34H15, 35A23, 35B99

\section{Introduction}

Controllability issues for parabolic problems have been a mainstream topic in recent years, and several developments have been pursued: starting from the heat equation in bounded and unbounded domain,  related contributions have been found for more general situations. A common strategy in showing controllability results is to prove that certain global Carleman estimates hold true for the operator which is the adjoint of the given one.

In this paper we focus on a class of singular parabolic operators with interior degeneracy of
the form
\begin{equation}\label{2}
u_t - \left(a(x)u_x \right) _x - \displaystyle
\frac{\lambda}{b(x)}u,
\end{equation}
associated to Dirichlet boundary conditions and with $(t,x) \in
Q_T:=(0,T) \times (0,1)$, $T>0$ being a fixed number. Here $a$ and
$b$ degenerate at the same interior point $x_0 \in
(0,1)$, and $\lambda \in \R$ satisfies suitable assumptions
(see condition \eqref{lambda} below). The fact that both $a$ and $b$ degenerate at $x_0$ is just for the sake of simplicity and shortness: all the stated results are still valid if they degenerate at different points. The prototypes we have in mind are $a(x)=|x-x_0|^{K_1}$ and $b(x)=|x-x_0|^{K_2}$ for some $K_1,K_2>0$. The main goal is to establish global Carleman estimates
for operators of the form given in \eqref{2}.

Such estimates for uniformly parabolic operators without
degeneracies or singularities have been largely developed (see, e.g., Fursikov--Imanuvilov \cite{fi}).
Recently, these estimates have been also studied for operators which are
not uniformly parabolic. Indeed, as pointed out by several authors,
many problems coming from Physics (see \cite{KaZo}), Biology (see \cite{epma}) and Mathematical Finance
(see \cite{HW}) are described by degenerate parabolic equations. In particular, new Carleman estimates (and consequently null controllability properties) were established in \cite{acf}, and also in \cite{cmv0}, \cite{mv}, for the operator
\[
u_t - (au_x)_x + c(t,x)u, \quad (t,x) \in Q_T,
\]
where $a(0)=a(1)=0$, $a\in C^1(0,1)$ and $c \in L^\infty(Q_T)$ (see also \cite{cfr},
\cite{cfr1} or \cite{f} for problems in non divergence form).

An interesting situation
is the case of parabolic operators with singular inverse-square potentials.
First results in this direction were obtained in \cite{vz2} for the {\em non degenerate}
singular potentials with heat-like operator
 \begin{equation}\label{4}
  u_t -
\Delta u-\lambda\frac{1}{|x|^2}u, \quad (t,x) \in (0,T)\times \Omega, 
\end{equation}
with associated Dirichlet boundary conditions in a bounded domain $\Omega\subset \R^N$ containing the singularity $x=0$ in the interior (see also \cite{vz} for the wave and Schr\"{o}dinger equations and \cite{cazacu} for boundary singularity).
Similar operators of the form
\[
  u_t -
\Delta u-\lambda\frac{1}{|x|^{K_2}}u, \quad (t,x) \in (0,T)\times \Omega, 
\]
arise for example in quantum mechanics (see, for
example, \cite{bg}, \cite{d}), or in combustion problems (see, for
example, \cite{be}, \cite{bv}, \cite{dglv}, \cite{gv}), and is known to generate interesting phenomena. For example,
in \cite{bg} and in \cite{bg1} it was proved that, for all values of $\lambda$, global positive
solutions exist if ${K_2} < 2$, whereas
instantaneous and complete blow-up occurs if ${K_2} > 2$. In the critical case, i.e. ${K_2} = 2$,
the value of the parameter $\lambda$ determines the behavior of the
equation: if $\lambda \le {1}/{4}$ (which is the optimal constant of the
Hardy inequality, see \cite{b}) global positive solutions exist, while, if $\lambda >{1}/{4}$, instantaneous and complete blow-up occurs
(for other comments on this argument we refer to \cite{v1}). We recall that in \cite{vz2}, Carleman estimates were established for \eqref{4} under the
condition $\lambda \le \displaystyle\frac{1}{4}$. On the contrary,
if $\lambda >\displaystyle\frac{1}{4}$, in \cite{e} it was proved
that null controllability fails.

We remark that the non degenerate problems studied in \cite{bg,cazacu,e,v1,vz,vz2} cover the multidimensional case, while here we treat the case $N=1$, like Vancostenoble \cite{v1}, who studied the operator that
couples a degenerate diffusion coefficient with a singular
potential. In particular, for ${K_1}\in [0,2)$ and
${K_2}\leq 2-{K_1}$, the author established Carleman estimates for
the operator
\[
u_t - (x^{K_1} u_x)_x-\lambda\frac{1}{x^{K_2}}u, \quad (t,x) \in
Q_T,
\]
unifying the results of \cite{cmv} and \cite{vz2} in the purely
degenerate operator and in the purely singular one, respectively.
This result was then extended in \cite{fs} and in \cite{fs1} to the
operators
\begin{equation}\label{vecch}
u_t - (a(x) u_x)_x-\lambda\frac{1}{x^{K_2}}u, \quad (t,x) \in Q_T,
\end{equation}
for $a\sim x^{K_1},\, K_1\in[0,2)$  and ${K_2}\leq 2-{K_1}$. Here, as before, the
function $a$ degenerates at the boundary of the space domain, and Dirichlet boundary conditions are in force.

We remark the fact that all the papers cited so far, with the exception of \cite{e}, consider a singular/degenerate 
operator with degeneracy or singularity appearing at the boundary of the
domain. For example, in \eqref{vecch} as $a$ one can also consider the double power
function
\[a(x)= x^k(1-x)^\kappa, \quad x \,\in \,[0,1],\]
where $k$ and $\kappa$ are positive constants. To the best of our knowledge,  \cite{bfm1}, \cite{fm} and \cite{fm1} are the first
papers dealing with Carleman estimates (and, consequently, null
controllability) for operators (in divergence and in non divergence form with Dirichlet or Neumann boundary conditions) with mere degeneracy at the interior of the
space domain (for related systems of degenerate equations we refer to \cite{bfm}). We also recall \cite{gabriela} and \cite{gabriela1} for other type  of control problems associated to parabolic operators with interior degeneracy in divergence and non divergence form, respectively.

We emphasize the fact that an interior degeneracy does not imply a simple adaptation of  previous results and of the techniques used for boundary degeneracy. Indeed, imposing homogeneous Dirichlet boundary conditions, in the latter case one knows {\em a priori} that any function in the reference functional space vani\-shes exactly at the degeneracy point. Now, since the degeneracy point is in the interior of the spatial domain, such information is not valid anymore, and we cannot take advantage of this fact.

For this reason, the present paper is devoted to study the
operator defined in \eqref{2}, that couples a \textit{general}
degenerate diffusion coefficient with a \textit{general} singular
potential with degeneracy and singularity at the \textit{interior}
of the space domain. In particular, under suitable conditions on all
the parameters of the operator, we establish Carleman estimates and,
as a consequence, null controllability for the associated
generalized heat problem. Clearly, this result generalizes the
one obtained in \cite{fm} or \cite{fm1}: in fact, if $\lambda =0$ (that is, if
we consider the purely degenerate case), we recover the main contributions therein. See also \cite{JDE} for the problem in non divergence form for both Dirichlet and Neumann boundary conditions.

We also remark the fact that, though we have in mind prototypes as power functions for the degeneracy and the singularity, we don not limit our investigation to these functions, which are analytic out of their zero. Indeed, in this paper, pure powers singularities and degeneracies are considered only as a by--product of our main results, which are valid for {\em non smooth} general coefficients. This is quite a new view--point when dealing with Carleman estimates, since in this framework it is natural to assume that all the coefficients in force are quite regular. However, though this strategy has been successful for years, it is clear that also more irregular coefficients can be considered and appear in a natural way (for instance, see  \cite{garofalo}, \cite{kt}). Nevertheless, it will be clear from the proof that Carleman estimates {\em do} hold without particular conditions also in the non smooth setting, while for observability (and thus controllability) another technical condition is needed; however, such a condition is trivially true for the prototypes.

For this reason,  for the first time to our best knowledge, in \cite{fm1} {\em non smooth} degenerate coefficients were treated. Continuing in this direction, here we consider operators which contain both degenerate and singular coefficients, as in \cite{fs}, \cite{fs1} and \cite{v1}, but with low regularity. 

The classical approach to study singular operators in dimension 1 relies in the validity of the Hardy--Poincar\'e inequality
\begin{equation}\label{hp1}
\int_0^1 \frac{u^2}{x^2}dx\leq 4\int_0^1(u')^2dx,
\end{equation}
which is valid for every $u\in H^1(0,1)$ with $u(0)=0$. Similar inequalities are the starting point to prove well--posedness of the associated problems in the Sobolev spaces under consideration. In our situation, we prove an inequality related to \eqref{hp1}, but with a degeneracy coefficient in the gradient term; such an estimate is valid in a suitable Hilbert space ${\cal H}$ we shall introduce below, and it states the existence of $C>0$ such that for all $u\in {\cal H}$ we have
\[
\int_0^1\frac{u^2}{b}dx\leq C\int_0^1 a(u')^2dx.
\]
This inequality, which is related to another weighted Hardy-Poincar\'{e} inequality (see Proposition \ref{HP}), is the key step for the well--posedness of \eqref{linear}. Once this is done, global Carleman estimates follow, provided that an {\em ad hoc} choice of the weight functions is made (see Theorem \ref{Cor1}).

The introduction of the space ${\cal H}$ (which may coincide with the usual Sobolev space in some cases) is another feature of this paper, which is completely new with respect to all the previous approaches: including the integrability of $u^2/b$ in the definition of ${\cal H}$ has the advantage of obtaining immediately some useful functional properties, that in general could be hard to show in the usual Sobolev spaces. Indeed, solutions were already found in suitable function spaces for the ``critical'' and ``supercritical'' cases (when $\lambda$ equals or exceeds the best constant in the classical Hardy--Poincar\'e inequality) in \cite{vz}and \cite{VaZu} for purely singular problems. However, as already done in the purely degenerate case (\cite{acf,bfm,bfm1,cfr,cfr1,cmv,fs,fs1,f,fm,fm1,fggr}), a weighted Sobolev space must be used. For this reason, we believe that it is natural to unify these approaches in the singular/degenerate, as we do.

Now, let us consider the evolution problem
\begin{equation}\label{linear}
\begin{cases}
u_t - \left(au_x \right) _x  - \displaystyle \frac{\lambda}{b(x)}u=h(t,x) \chi_{\omega}(x), & (t,x) \in Q_T,\\
u(t,0)=u(t,1)=0, &t \in (0,T),\\
u(0,x)=u_0(x), & x \in (0,1),
\end{cases}
\end{equation}
where $u_0\in L^2(0,1)$,
the control $h \in L^2(Q_T)$ acts on a non empty interval $\omega\subset (0,1)$ and $\chi_\omega$ denotes the characteristic function of $\omega$. 

As usual, we  say that problem \eqref{linear} is null controllable if there exists $h\in L^2(Q_T)$ such that $u(T,x)\equiv 0$ for $x\in [0,1]$. A common strategy to show that \eqref{linear} is null controllable is to prove Carleman estimates for any solution $v$ of the adjoint problem of  \eqref{linear}
\[
\begin{cases}
v_t +(av_x)_x +\displaystyle \frac{ \lambda}{b(x)}v= 0, &(t,x) \in
Q_T,
\\[5pt]
v(t,0)=v(t,1) =0, & t \in (0,T),
\\[5pt]
v(T,x)= v_T(x),
\end{cases}
\]
and then deduce an observability inequality of the form
\begin{equation}\label{disob}
\int_0^1v^2(0,x) dx \le C_T\int_0^T \int_{\omega}v^2(t,x)dxdt,
\end{equation}
where $C_T>0$ is a universal constant. In the non degenerate case this has been obtained by a well--established procedure using Carleman and Caccioppoli inequalities. In our singular/degenerate non smooth situation, we need a new suitable Caccioppoli inequality (see Proposition \ref{caccio}), as well as global Carleman estimates in the non smooth non degenerate and non singular case (see Proposition \ref{classical Carleman}), which will be used far away from $x_0$ within a localization procedure via cut--off functions. Once these tools are established, we are able to prove an observability inequality like \eqref{disob}, and then controllability results for \eqref{linear}. However, we cannot do that in all cases, since we have to exclude that both the degeneracy and the singularity are strong, see condition {\bf(SSD)} below.

Finally, we remark that our studies with non smooth coefficients are particularly useful. In fact, though null controllability results could be obtained also in other ways, for example by a localization technique (at least when $x_0\in \omega$), in \cite{fm1} it is shown that with non smooth coefficients, even when $\lambda=0$, this is not always the case. For this, our approach with observability inequalities is very general and permits to cover more involved situations.

\medskip

The paper is organized in the following way: in Section \ref{sec2}
we study the well--posedness of problem \eqref{linear}, giving some general tools
that we shall use several times. In Section \ref{sec3} we provide one of
the main results of this paper, i.e. Carleman estimates for the adjoint problem to \eqref{linear}. In Section
\ref{sec4} we apply the previous Carleman estimates to prove an
observability inequality, which, together with a Caccioppoli type
inequality, lets us derive new null controllability results for the
associated singular/degenerate problem, also when the degeneracy and the
singularity points are {\em inside} the control region.

A final comment on the notation: by $c$ or $C$ we shall denote
{\em universal} positive constants, which are allowed to vary from line to
line.

\section{Well--posedness}\label{sec2}

The ways in which $a$ and $b$ degenerate at $x_0$ can be quite
different, and for this reason we distinguish four different types
of degeneracy. In particular, we consider the following cases:
\begin{Assumptions}\label{Ass0}
{\bf Doubly weakly degenerate case (WWD):} there exists $x_0\in
(0,1)$ such that $a(x_0)=b(x_0) =0$, $a, b>0$ on $[0, 1]\setminus
\{x_0\}$, $a, b\in W^{1,1}(0,1)$ and there exists $K_1, K_2 \in
(0,1)$ such that $(x-x_0)a' \le K_1 a$ and $(x-x_0)b' \le K_2 b$
a.e. in $[0,1]$.
\end{Assumptions}

\begin{Assumptions}\label{Ass0_1}
{\bf Weakly-strongly degenerate case (WSD):} there exists $x_0 \in
(0,1)$ such that $a(x_0)=b(x_0)=0$, $a, b>0$ on $[0, 1]\setminus
\{x_0\}$, $a\in W^{1,1}(0,1)$, $ b\in W^{1, \infty}(0,1)$ and there
exist $K_1\in (0,1)$, $K_2 \geq1$ such that $(x-x_0)a' \le K_1
a$ and $(x-x_0)b' \le K_2 b$ a.e. in $[0,1]$.
\end{Assumptions}

\begin{Assumptions}\label{Ass01_1}
{\bf Strongly-weakly degenerate case (SWD):} there exists $x_0 \in
(0,1)$ such that $a(x_0)=b(x_0)=0$, $a, b>0$ on $[0, 1]\setminus
\{x_0\}$, $a\in W^{1, \infty}(0,1)$, $b\in W^{1,1}(0,1)$, and there
exist $K_1\geq1$, $K_2 \in (0,1)$ such that $(x-x_0)a' \le K_1
a$ and $(x-x_0)b' \le K_2 b$ a.e. in $[0,1]$.
\end{Assumptions}

\begin{Assumptions}\label{Ass01}
{\bf Doubly strongly degenerate case (SSD):} there exists $x_0 \in
(0,1)$ such that $a(x_0)=b(x_0)=0$, $a, b>0$ on $[0, 1]\setminus
\{x_0\}$, $a, b\in W^{1, \infty}(0,1)$ and there exist $K_1, K_2 \geq1$ such that $(x-x_0)a' \le K_1 a$ and $(x-x_0)b' \le K_2 b$
a.e. in $[0,1]$.
\end{Assumptions}
Typical examples for the previous degeneracies and singularities are
$a(x)=|x- x_0|^{K_1}$ and $b(x)= |x-x_0|^{K_2}$, with $ 0<K_1,
K_2<2$.

\begin{Remark}
The restriction $K_i<2$ is related to the controllability issue. Indeed, it is clear from the proof of Theorem \ref{theorem1} that such a condition is useless, for example, when $\lambda<0$. On the other hand, concerning controllability, we will not consider the case $K_i\geq2$, since if $a(x)=|x- x_0|^{K_1}$, $K_1 \ge2$ and $\lambda=0$, by a standard change of variables  (see \cite{fm1}), problem \eqref{linear} may be transformed in a non degenerate heat equation
on an unbounded domain, while the control remains distributed in a bounded domain. This situation is now well--understood, and the lack of null controllability was proved by Micu and Zuazua in \cite{mz}.
\end{Remark}

We will use the following result several times; we state it for $a$, but an analogous one holds for  $b$ replacing $K_1$ with $K_2$:
\begin{Lemma}[Lemma 2.1, \cite{fm}]\label{Lemma 2.1}
Assume that there exists $x_0 \in (0,1)$ such that $a(x_0)=0$, $a>0$ on $[0, 1]\setminus
\{x_0\}$, and either
\begin{itemize}
\item $a\in W^{1, 1}(0,1)$ and there
exist $K_1\in (0,1)$ such that $(x-x_0)a' \le K_1
a$ a.e. in $[0,1]$, or
\item $a\in W^{1, \infty}(0,1)$ and there
exist $K_1\in [1,2)$ such that $(x-x_0)a' \le K_1
a$ a.e. in $[0,1]$.
\end{itemize}
\begin{enumerate}
\item Then for all $\gamma \ge K_1$ the map
$$
\begin{aligned}
& x \mapsto \dfrac{|x-x_0|^\gamma}{a} \mbox { is non increasing on
the left of } x=x_0 \\
& \mbox{and non decreasing on the right of }
x=x_0,\\
&\mbox{ so that }\lim_{x\to x_0}\dfrac{|x-x_0|^\gamma}{a}=0 \mbox{
for all }\gamma>K_1.
\end{aligned}
$$
\item If $K_1<1$, then
    $\displaystyle\frac{1}{a} \in L^{1}(0,1)$.\\
\item If $K _1\in[1,2)$, then $\displaystyle \frac{1}{\sqrt{a}} \in
    L^{1}(0,1)$ and $\displaystyle \frac{1}{a}\not \in L^1(0,1)$.
\end{enumerate}
\end{Lemma}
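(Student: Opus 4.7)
The plan is to derive all three conclusions from a single monotonicity computation applied to $f_\gamma(x):=|x-x_0|^\gamma/a(x)$. Since $a$ is absolutely continuous on $[0,1]$ by the $W^{1,1}$ hypothesis and strictly positive away from $x_0$, $f_\gamma$ is absolutely continuous on every compact subset of $[0,1]\setminus\{x_0\}$. On $(x_0,1]$ a direct differentiation gives
\[
f_\gamma'(x)=\frac{(x-x_0)^{\gamma-1}}{a(x)^2}\bigl[\gamma\, a(x)-(x-x_0)a'(x)\bigr],
\]
and the hypothesis $(x-x_0)a'\le K_1 a$ together with $\gamma\ge K_1$ makes the bracket at least $(\gamma-K_1)a\ge 0$, so $f_\gamma'\ge 0$ a.e.\ and hence $f_\gamma$ is non-decreasing on $(x_0,1]$. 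The analogous computation on $(0,x_0)$ picks up an extra minus sign from $\frac{d}{dx}(x_0-x)^\gamma$; after observing that $x-x_0<0$ turns the hypothesis into $(x_0-x)a'\ge -K_1 a$, the same algebra delivers $f_\gamma'\le 0$ there, i.e.\ $f_\gamma$ is non-increasing. This is the monotonicity half of part 1.

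The monotonicity makes the one-sided limits at $x_0$ well-defined and, applied with $\gamma=K_1$, yields the uniform bound
\[
\frac{|x-x_0|^{K_1}}{a(x)}\le M:=\max\!\left\{\frac{x_0^{K_1}}{a(0)},\,\frac{(1-x_0)^{K_1}}{a(1)}\right\}\qquad\text{on }[0,1]\setminus\{x_0\},
\]
because the maxima on each side are attained at the far endpoints, where $a$ is strictly positive and finite. For any $\gamma>K_1$ I then factor
\[
\frac{|x-x_0|^\gamma}{a(x)}=\frac{|x-x_0|^{K_1}}{a(x)}\cdot|x-x_0|^{\gamma-K_1}\le M|x-x_0|^{\gamma-K_1}\longrightarrow 0,
\]
which finishes part 1.

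For part 2 ($K_1<1$) the same uniform bound gives $1/a(x)\le M|x-x_0|^{-K_1}$, which lies in $L^1(0,1)$ since $K_1<1$. For the first half of part 3 ($K_1\in[1,2)$), $1/\sqrt{a(x)}\le \sqrt{M}\,|x-x_0|^{-K_1/2}$ is integrable because $K_1/2<1$. The remaining non-integrability of $1/a$ uses the upgraded regularity $a\in W^{1,\infty}(0,1)$ that is in force in this regime: $a$ is Lipschitz with $a(x_0)=0$, so $a(x)\le \|a'\|_{L^\infty}|x-x_0|$, hence $1/a(x)\ge C/|x-x_0|$, which is not in $L^1$ near $x_0$.

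The only real technical subtlety is the rigorous justification of the sign of $f_\gamma'$ under the weak $W^{1,1}$ hypothesis: the derivative is defined only a.e., but absolute continuity of $f_\gamma$ on compact subsets of $[0,1]\setminus\{x_0\}$ lets the a.e.\ sign of $f_\gamma'$ be upgraded to genuine monotonicity of $f_\gamma$. Everything else in the argument is a direct corollary of this monotonicity together with the Lipschitz upper bound on $a$ in the strong-degeneracy regime.
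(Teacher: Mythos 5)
Your proof is correct, and it follows essentially the standard argument: the paper itself imports this lemma from \cite{fm} without reproving it, and the proof there proceeds exactly as you do, via the a.e.\ differential inequality for $|x-x_0|^\gamma/a$ upgraded to monotonicity by local absolute continuity, the resulting uniform bound $|x-x_0|^{K_1}/a\le M$ for the limit and the integrability statements, and the Lipschitz bound $a(x)\le\|a'\|_{L^\infty}|x-x_0|$ for the non-integrability of $1/a$ when $K_1\in[1,2)$.
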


For the well--posedness of the problem, we start introducing the following
weighted Hilbert spaces, which are suitable to study all situations, namely the {\em (WWD)}, {\em (SSD)}, {\em
(WSD)} and {\em (SWD)} cases:
\[
\begin{aligned}
 H^1_{a} (0,1):=\Big\{ u \in W^{1,1}_0(0,1) \,:\, \sqrt{a} u' \in  L^2(0,1)\Big\}
\end{aligned}
\]
and
\[
H^1_{a,b}(0,1):=\left\{ u \in H^1_{a}(0,1)\,:\,  \dfrac{u}{\sqrt{b}}\in L^2(0,1)\right\},
\]
endowed with the inner products
\[
\langle u,v\rangle_{H^1_a(0,1)}:=\int_0^1au'v'dx+\int_0^1uv\,dx,\] and
\[
\langle u,v\rangle_{H^1_{a,b}(0,1)}=\int_0^1au'v'dx+\int_0^1uv\,dx+\int_0^1\frac{uv}{b}dx,
\]
respectively.

Note that, if $u\in H^1_a(0,1)$, then $au'\in L^2(0,1)$, since $|au'|\leq(\ds\max_{[0,1]}\sqrt{a})\sqrt{a}|u'|$.

We recall the following weighted Hardy--Poincar\'e inequality, see \cite[Proposition 2.6]{fm}:
\begin{Proposition}\label{HP}
Assume that $p \in C([0,1])$, $p>0$ on $[0,1]\setminus \{x_0\}$,
$p(x_0)=0$ and there exists $q>1$ such that the function
\begin{equation}\label{condp}
\begin{aligned}
x \mapsto \dfrac{p(x)}{|x-x_0|^{q}} &\mbox { is
non increasing on the left of } x=x_0 \\
& \mbox{ and non
decreasing on the right of } x=x_0.
\end{aligned}
\end{equation}
\noindent Then, there exists a constant $C_{HP}>0$ such that for any
function $w$, locally absolutely continuous on $[0,x_0)\cup (x_0,1]$
and satisfying
$$
w(0)=w(1)=0 \,\, \mbox{with } \int_0^1 p(x)|w^{\prime}(x)|^2 \,dx <
+\infty,
$$ the following inequality holds:
\begin{equation}\label{hardy1}
\int_0^1 \dfrac{p(x)}{(x-x_0)^2}w^2(x)\, dx \leq C_{HP}\, \int_0^1
p(x) |w^{\prime}(x)|^2 \,dx.
\end{equation}
\end{Proposition}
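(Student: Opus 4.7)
The plan is to split $\int_0^1 \frac{p(x)}{(x-x_0)^2}w^2\,dx$ at the interior degeneracy point $x_0$ and treat the two halves $[0,x_0]$ and $[x_0,1]$ separately. By symmetry it suffices to work on $[0,x_0]$, where $w(0)=0$, and to handle $[x_0,1]$ identically using $w(1)=0$. On $[0,x_0]$, the monotonicity hypothesis lets me factor $p(x)=(x_0-x)^q\alpha(x)$ with $\alpha(x):=p(x)/(x_0-x)^q$ non-negative and non-increasing, so the target reduces to
\[
\int_0^{x_0}\alpha(x)(x_0-x)^{q-2}w^2\,dx \;\le\; \frac{4}{(q-1)^2}\int_0^{x_0}\alpha(x)(x_0-x)^{q}(w'(x))^2\,dx.
\]

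The central identity is $(x_0-x)^{q-2}=-\frac{1}{q-1}\frac{d}{dx}(x_0-x)^{q-1}$, which invites an integration by parts. To avoid boundary-value delicacies at $x_0^-$, I work on the truncated interval $[0,x_0-\varepsilon]$ and set $F(\varepsilon):=\int_0^{x_0-\varepsilon}\alpha(x)(x_0-x)^{q-2}w^2\,dx$. Integration by parts in the Stieltjes sense (since $\alpha$ is only of bounded variation) produces
\begin{align*}
F(\varepsilon)=-\frac{\alpha(x_0-\varepsilon)w^2(x_0-\varepsilon)\varepsilon^{q-1}}{q-1}
&+\frac{1}{q-1}\int_0^{x_0-\varepsilon}(x_0-x)^{q-1}w^2\,d\alpha \\
&+\frac{2}{q-1}\int_0^{x_0-\varepsilon}\alpha(x)(x_0-x)^{q-1}w\,w'\,dx,
\end{align*}
where the boundary contribution at $x=0$ has already dropped out because $w(0)=0$. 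Two sign observations close the argument: the explicit boundary term at $x_0-\varepsilon$ is non-positive (a minus sign before a non-negative quantity), and the Stieltjes integral is non-positive because $d\alpha\le 0$; hence both may be discarded to get an upper bound. Applying Cauchy--Schwarz to the cross-term via the splitting $\alpha^{1/2}(x_0-x)^{(q-2)/2}|w|\cdot\alpha^{1/2}(x_0-x)^{q/2}|w'|$ yields
\[
F(\varepsilon)\le \frac{2}{q-1}\sqrt{F(\varepsilon)}\sqrt{B},\qquad B:=\int_0^{x_0}p(x)(w')^2\,dx,
\]
whence $F(\varepsilon)\le \frac{4}{(q-1)^2}B$ uniformly in $\varepsilon$. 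Monotone convergence as $\varepsilon\to 0^+$ recovers the one-sided estimate, and the symmetric argument on $[x_0,1]$ completes the proof with $C_{HP}=4/(q-1)^2$.

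The main technical obstacle is justifying the integration by parts when $\alpha$ is merely monotone (not $C^1$) and $w$ is only locally absolutely continuous on $[0,x_0)$. This is handled by standard regularisation: smoothen $\alpha$ by convolution with a mollifier, which preserves the non-increasing property, and simultaneously approximate $w$ by functions vanishing in a neighbourhood of $x_0$, for which the computation is classical. Passing to the limit via monotone and dominated convergence transfers the constant $4/(q-1)^2$ to the original functions; preservation of the sign of $d\alpha$ under mollification and the non-negativity of the truncated integrand $F(\varepsilon)$ are the crucial ingredients to obtain uniform bounds in the limit.
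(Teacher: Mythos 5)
Your proof is correct, but it takes a genuinely different route from the paper's. The paper proves \eqref{hardy1} without any integration by parts: starting from $w(x)=-\int_x^1 w'(y)\,dy$ on the right of $x_0$ (and $w(x)=\int_0^x w'(y)\,dy$ on the left), it inserts an auxiliary power weight $(y-x_0)^{\pm\beta/2}$ with $\beta\in(1,q)$, applies the Cauchy--Schwarz inequality and Fubini's theorem, and then uses the monotonicity hypothesis only in the pointwise form $p(x)/(x-x_0)^q\le p(y)/(y-x_0)^q$ for $x<y$; this yields the constant $\frac{1}{(\beta-1)(q-\beta)}$, whose optimization in $\beta$ gives exactly your $4/(q-1)^2$. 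You instead factor $p(x)=|x-x_0|^q\alpha(x)$ with $\alpha$ monotone, integrate by parts against $d\alpha$ on the truncated interval, discard the two terms of favorable sign, and absorb via Cauchy--Schwarz. Both arguments are sound; yours is the classical Hardy-inequality mechanism and produces the explicit constant directly, but it obliges you to interpret $d\alpha$ as a Lebesgue--Stieltjes measure and to justify the product rule $d(\alpha w^2)=w^2\,d\alpha+2\alpha w w'\,dx$ for a merely monotone $\alpha$; your mollification remedy is legitimate (and note that approximating $w$ is not actually needed, since $w$ is already absolutely continuous on $[0,x_0-\varepsilon]$ and the sign of the boundary term at $x_0-\varepsilon$ is all you use), whereas the paper's H\"older--Fubini argument never differentiates $p$ and so avoids this measure-theoretic care entirely. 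The one step you should state explicitly is that $F(\varepsilon)<+\infty$ for each fixed $\varepsilon>0$ (true, because $\alpha\le\alpha(0)$ by monotonicity and $w$ is continuous on $[0,x_0-\varepsilon]$), since this is what legitimizes dividing by $\sqrt{F(\varepsilon)}$ before letting $\varepsilon\to 0^+$.
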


\begin{Remark}
Actually, such a proposition was proved in \cite{fm} also requiring $q<2$. However, as it is clear from the proof, the result is true without such an upper bound on $q$, that in \cite{fm} was used for other estimates.
\end{Remark}

Moreover, we will also need other types of Hardy's inequalities. Let us start with the following crucial
\begin{Lemma}\label{L2'}
If $K_1+K_2\leq 2$ and $K_2< 1$, then there exists a constant $C>0$ such that
\begin{equation}\label{1}
\int_0^1\frac{u^2}{b}dx\leq
 C \int_0^1 a (u')^2dx
\end{equation}
for every $u\in H^1_a(0,1)$.
\end{Lemma}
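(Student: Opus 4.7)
My plan is to deduce the inequality from the weighted Hardy--Poincar\'e inequality (Proposition \ref{HP}) applied with the tailored weight $p(x):=(x-x_0)^2/b(x)$, extended by $p(x_0):=0$, which is engineered precisely so that the left-hand side of \eqref{hardy1} collapses to $\int_0^1 u^2/b\,dx$. First I would verify that this $p$ fits into Proposition \ref{HP}: continuity and positivity on $[0,1]\setminus\{x_0\}$ are immediate, while continuity at $x_0$ follows from Lemma \ref{Lemma 2.1}(1) applied to $b$ with $\gamma=2$, which is admissible because $K_2<1<2$. For the valley-shape condition \eqref{condp} I would take $q:=2-K_2$, which is strictly greater than $1$ precisely because of the hypothesis $K_2<1$. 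A direct computation gives
\[
\frac{p(x)}{|x-x_0|^q}=\frac{|x-x_0|^{K_2}}{b(x)},
\]
and Lemma \ref{Lemma 2.1}(1) applied to $b$ with $\gamma=K_2$ shows that this ratio is non-increasing on the left of $x_0$ and non-decreasing on the right, as required.

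Proposition \ref{HP} then yields, for every $u\in H^1_a(0,1)\subset W^{1,1}_0(0,1)$,
\[
\int_0^1\frac{u^2}{b}\,dx=\int_0^1\frac{p(x)}{(x-x_0)^2}u^2\,dx\le C_{HP}\int_0^1\frac{(x-x_0)^2}{b}(u')^2\,dx,
\]
and it remains to dominate $(x-x_0)^2/b$ pointwise by a multiple of $a$. For this, applying Lemma \ref{Lemma 2.1}(1) separately to $a$ (with $\gamma=K_1$) and to $b$ (with $\gamma=K_2$), both ratios $|x-x_0|^{K_1}/a$ and $|x-x_0|^{K_2}/b$ are monotone on each side of $x_0$, hence bounded on $[0,1]$ by constants $M_a,M_b$ (the larger of their values at the endpoints $0$ and $1$). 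Multiplying gives $|x-x_0|^{K_1+K_2}\le M_aM_b\,a(x)b(x)$, and since $K_1+K_2\le 2$ and $|x-x_0|\le 1$ on $[0,1]$ we have $(x-x_0)^2\le|x-x_0|^{K_1+K_2}$, so that $(x-x_0)^2/b\le M_aM_b\,a$ pointwise. Combining this with the previous display produces the desired inequality with $C=C_{HP}M_aM_b$.

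Thus the proof has no genuinely hard step once one has identified the weight $p$; the difficulty is precisely in guessing it. The two hypotheses enter transparently and sharply: $K_2<1$ is exactly what enables the admissible choice $q=2-K_2>1$ in Proposition \ref{HP}, while $K_1+K_2\le 2$ is exactly what turns the Hardy--Poincar\'e right-hand side into one controlled by $\int_0^1 a(u')^2\,dx$.
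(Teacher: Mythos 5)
Your proof is correct and follows essentially the same route as the paper: the same weight $p(x)=(x-x_0)^2/b(x)$ with $q=2-K_2>1$ in Proposition \ref{HP}, followed by the same pointwise bound $p\le c\,a$ obtained from Lemma \ref{Lemma 2.1} and $K_1+K_2\le 2$. The extra verifications you include (continuity of $p$ at $x_0$ and the admissibility of $u\in H^1_a(0,1)$ in Proposition \ref{HP}) are details the paper leaves implicit, not a different argument.
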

\begin{proof}
We set $p(x):=\dfrac{(x-x_0)^2}{b}$, so that $p$ satisfies \eqref{condp} with $q=2-K_2>1$ by Lemma \ref{Lemma 2.1}. Thus, taken $u\in H^1_a(0,1)$, by Proposition \ref{HP}, we get
\[
\int_0^1\frac{u^2}{b}dx=\int_0^1\frac{p(x)}{(x-x_0)^2}u^2dx\leq C_{HP}\int_0^1p(x)|u'(x)|^2dx.
\]
Now, by Lemma \ref{Lemma 2.1},
\[
p(x)=(x-x_0)^{2-K_1-K_2}a(x)\frac{(x-x_0)^{K_1}}{a(x)}\frac{(x-x_0)^{K_2}}{b(x)}\leq ca(x)
\]
for some $c>0$, and the claim follows.
\end{proof}
\begin{Remark}
A similar proof shows that, when $K_1+2K_2\leq 2$ and $K_2<1/2$, then
\[
\int_0^1\frac{u^2}{b^2}dx\leq
 C \int_0^1 a (u')^2dx
\]
for every $u\in H^1_a(0,1)$.
\end{Remark}

Lemma \ref{L2'} implies that $H^1_a(0,1)= H^1_{a,b}(0,1)$ when $K_1+K_2\leq 2$ and $K_2<1$. However, inequality \eqref{1} holds in other cases, see Proposition \ref{PropH} below. In order to prove such a proposition, we need a preliminary result:
\begin{Lemma}\label{leso}
If $K_2\geq1$, then $u(x_0)=0$ for every $u\in H^1_{a,b}(0,1)$.
\end{Lemma}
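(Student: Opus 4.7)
The plan is to argue by contradiction using the continuity of $u$ at $x_0$ combined with the non-integrability of $1/b$ near $x_0$ when $K_2\geq 1$.

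First, I would note that since $u\in H^1_{a,b}(0,1)\subset W^{1,1}_0(0,1)$, the function $u$ is continuous on $[0,1]$, so the value $u(x_0)$ is well-defined. Suppose, toward a contradiction, that $u(x_0)\neq 0$. By continuity, there exist $\delta>0$ and $c>0$ such that $u^2(x)\geq c^2$ for every $x\in(x_0-\delta,x_0+\delta)\subset (0,1)$. Hence
\[
\int_0^1 \frac{u^2(x)}{b(x)}\,dx \;\geq\; c^2 \int_{x_0-\delta}^{x_0+\delta}\frac{dx}{b(x)}.
\]

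Next, I would show that $1/b$ is not in $L^1$ on any neighborhood of $x_0$ when $K_2\geq 1$. The standing condition $(x-x_0)b'\leq K_2 b$ together with $b(x_0)=0$ yields, via Lemma \ref{Lemma 2.1} applied to $b$ with $\gamma=K_2$, that the map $x\mapsto |x-x_0|^{K_2}/b(x)$ is non-increasing on $(x_0-\delta,x_0)$ and non-decreasing on $(x_0,x_0+\delta)$. In particular, this ratio is bounded above by some finite value near $x_0$, so there exists $C>0$ such that
\[
\frac{1}{b(x)}\;\geq\;\frac{C}{|x-x_0|^{K_2}} \qquad \text{for all } x\in(x_0-\delta,x_0+\delta)\setminus\{x_0\}.
\]
Since $K_2\geq 1$, the function $|x-x_0|^{-K_2}$ fails to be integrable on any neighborhood of $x_0$, so the right-hand side integral diverges. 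Consequently $\int_0^1 u^2/b\,dx=+\infty$, contradicting the membership $u/\sqrt{b}\in L^2(0,1)$ that defines $H^1_{a,b}(0,1)$.

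The argument has essentially no obstruction: the only point that requires a bit of care is to make sure that the monotonicity given by Lemma \ref{Lemma 2.1} is available regardless of whether we are in the regime $K_2\in[1,2)$ treated there or in a possibly larger one. In the latter situation, the same conclusion follows directly by integrating the differential inequality $(\ln b)'\leq K_2/(x-x_0)$, which shows that $b(x)/|x-x_0|^{K_2}$ is bounded above near $x_0$, thus giving the same lower bound on $1/b$ and the same contradiction.
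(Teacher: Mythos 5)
Your overall strategy coincides with the paper's: use that $u\in W^{1,1}_0(0,1)$ is continuous at $x_0$, assume $u(x_0)\neq 0$, and contradict $u/\sqrt{b}\in L^2(0,1)$ via the non-integrability of $1/b$ near $x_0$. However, the step where you justify that non-integrability contains a genuine error of direction. From Lemma \ref{Lemma 2.1}(1) you correctly get that $|x-x_0|^{K_2}/b$ is non-increasing on the left and non-decreasing on the right of $x_0$, hence bounded \emph{above} near $x_0$; but an upper bound $|x-x_0|^{K_2}/b\le M$ yields $1/b\le M\,|x-x_0|^{-K_2}$, i.e.\ an \emph{upper} bound on $1/b$, not the lower bound $1/b\ge C\,|x-x_0|^{-K_2}$ you assert. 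The same reversal occurs in your fallback: integrating $(\ln b)'\le K_2/(x-x_0)$ shows that $b/|x-x_0|^{K_2}$ is non-increasing as one moves away from $x_0$, hence bounded \emph{below} by a positive constant near $x_0$ (it may even blow up there, e.g.\ $b=|x-x_0|$ with $K_2=2$), so it does not give $b\le C|x-x_0|^{K_2}$. In fact the structural inequality $(x-x_0)b'\le K_2 b$ with $K_2\ge 1$ alone cannot yield the claim: $b(x)=|x-x_0|^{1/2}$ satisfies $(x-x_0)b'\le b$, yet $1/b\in L^1(0,1)$.

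What rescues the argument — and what the paper's one-line proof actually invokes — is item (3) of Lemma \ref{Lemma 2.1} applied to $b$: in the regimes where $K_2\ge 1$ (Hypotheses \ref{Ass0_1} and \ref{Ass01}) one also assumes $b\in W^{1,\infty}(0,1)$, so $b(x)=|b(x)-b(x_0)|\le \|b'\|_{L^\infty}|x-x_0|$, whence $1/b\ge c/|x-x_0|\notin L^1$ near $x_0$. Replacing your monotonicity deduction by this Lipschitz bound (or simply citing Lemma \ref{Lemma 2.1}(3)) closes the gap; the remainder of your contradiction argument is then exactly the paper's proof.
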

\begin{proof}
Since $u\in W^{1,1}_0(0,1)$, there exists $\lim_{x\to x_0}u(x)=L\in \R$. If $L\neq 0$, then $|u(x)|\geq \ds\frac{L}{2}$ in a neighborhood of $x_0$, that is
\[
\frac{|u(x)|^2}{b}\geq \frac{L^2}{4b}\not \in L^1(0,1)
\]
by Lemma \ref{Lemma 2.1}, and thus $L=0$.
\end{proof}

We also need the following result, whose proof, with the aid of Lemma \ref{leso}, is a simple adaptation of the one given in \cite[Lemma 3.2]{fggr}.
\begin{Lemma}\label{densita}
If $K_2\geq1$, then
\[
H^1_c(0,1):=\Big\{u\in H^1_0(0,1)\mbox{ \rm such that supp}\,u\subset (0,1)\setminus\{x_0\}\Big\}
\] 
is dense in $H^1_{a,b}(0,1)$.
\end{Lemma}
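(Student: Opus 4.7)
The plan is to mirror the proof of \cite[Lemma 3.2]{fggr}, adapted to the larger space $H^1_{a,b}(0,1)$: approximate an arbitrary $u \in H^1_{a,b}(0,1)$ by truncating a shrinking neighborhood of $x_0$. First I would fix piecewise-linear $\psi_n:[0,1]\to[0,1]$ with $\psi_n \equiv 0$ on $[x_0 - 1/n, x_0 + 1/n]$, $\psi_n \equiv 1$ outside $[x_0 - 2/n, x_0 + 2/n]$, and $|\psi_n'| \leq n$ a.e., and set $u_n := \psi_n u$. By construction $u_n \in H^1_0(0,1)$ with support in $(0,1)\setminus\{x_0\}$, so $u_n \in H^1_c(0,1)$; it remains to prove $u_n \to u$ in $H^1_{a,b}(0,1)$.

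The convergences $\|u_n - u\|_{L^2(0,1)} \to 0$ and $\|(u_n - u)/\sqrt b\|_{L^2(0,1)} \to 0$ come directly from dominated convergence, using that $u$ and $u/\sqrt b$ lie in $L^2(0,1)$ (the latter because $u \in H^1_{a,b}$) and that $(1-\psi_n)\to 0$ pointwise a.e. Decomposing $(u_n - u)' = (\psi_n - 1) u' + \psi_n' u$, the first piece tends to zero in the $a$-weighted $L^2$-norm again by dominated convergence, being dominated by $\sqrt a\,|u'| \in L^2(0,1)$. The crucial point is therefore to show
\[
I_n := \int_0^1 a(\psi_n')^2 u^2\,dx \longrightarrow 0,
\]
and this is where Lemma \ref{leso} enters decisively: since $K_2 \geq 1$, we have $u(x_0) = 0$.

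To control $I_n$, I would combine $u(x_0)=0$ with the pointwise size of the weights near $x_0$ provided by Lemma \ref{Lemma 2.1}, namely $a(x) \leq C|x - x_0|^{K_1}$ and $b(x) \leq C|x - x_0|^{K_2}$ in a punctured neighborhood of $x_0$. Writing $au^2 = (ab)\cdot(u^2/b) \leq C|x-x_0|^{K_1+K_2}(u^2/b)$ on the shell $S_n := \{1/n \leq |x - x_0| \leq 2/n\}$ that carries $\psi_n'$ gives
\[
I_n \leq n^2 \int_{S_n} a u^2\,dx \leq C\, n^{2 - K_1 - K_2} \int_{S_n} \frac{u^2}{b}\,dx,
\]
and the shell integral tends to $0$ as $n \to \infty$ by absolute continuity of the integral of the $L^1$-function $u^2/b$. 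In the strongly-degenerate regime $K_1 + K_2 \geq 2$ (which covers the \emph{(SSD)} situation) the prefactor is bounded and the argument closes immediately. The main obstacle is the complementary subregime $K_1 < 1$, $K_2 \geq 1$ with $K_1 + K_2 < 2$ (i.e.\ \emph{(WSD)}), where the factor $n^{2-K_1-K_2}$ blows up. I would overcome it through an auxiliary weighted Hardy-type inequality $\int_0^1 au^2/(x-x_0)^2\,dx \leq C\|u\|_{H^1_a}^2$, provable by integration by parts using $(x - x_0)a' \leq K_1 a$ together with $K_1 < 1$ and $u(x_0) = 0$: this upgrades the shell bound to $\int_{S_n} au^2\,dx \leq (2/n)^2 \int_{|x-x_0|\leq 2/n} au^2/(x-x_0)^2\,dx$, the right-hand side again vanishing by absolute continuity. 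Once $I_n \to 0$ is in hand, the three convergences combine to give $u_n \to u$ in $H^1_{a,b}(0,1)$.
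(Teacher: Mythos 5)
Your overall strategy (cut-off functions $\psi_n$ supported away from $x_0$, reduction to $I_n=\int_0^1 a(\psi_n')^2u^2\,dx\to 0$, with Lemma \ref{leso} supplying $u(x_0)=0$) is exactly the adaptation of \cite[Lemma 3.2]{fggr} that the paper invokes without writing out. However, your central shell estimate rests on a misreading of Lemma \ref{Lemma 2.1}: that lemma says $|x-x_0|^{\gamma}/a$ is non\emph{increasing} to the left and non\emph{decreasing} to the right of $x_0$, which yields the \emph{lower} bounds $a(x)\ge c|x-x_0|^{K_1}$ and $b(x)\ge c|x-x_0|^{K_2}$ near $x_0$, not the upper bounds $a\le C|x-x_0|^{K_1}$, $b\le C|x-x_0|^{K_2}$ that you use. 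These upper bounds are false in general for the non smooth coefficients the paper allows: e.g.\ $a(x)=|x-x_0|^{K_1}\log(e/|x-x_0|)$ satisfies $(x-x_0)a'\le K_1a$ and the regularity hypotheses, yet $a/|x-x_0|^{K_1}$ is unbounded at $x_0$. Consequently the inequality $au^2\le C|x-x_0|^{K_1+K_2}u^2/b$ on the shell $S_n$, and with it the bound $I_n\le Cn^{2-K_1-K_2}\int_{S_n}u^2/b$ and the claim that the case $K_1+K_2\ge 2$ ``closes immediately,'' are unjustified as written. (A smaller point: your $u_n$ need not have support away from the endpoints $0,1$; either read $H^1_c$ as only excluding $x_0$ from the support, or add a truncation near the endpoints.)

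The good news is that the gap is reparable within your own scheme. Your auxiliary inequality $\int_0^1 a\,u^2/(x-x_0)^2\,dx\le C\int_0^1 a(u')^2\,dx$ is indeed provable by integration by parts from $(x-x_0)a'\le K_1a$ whenever $K_1<1$ and $u(x_0)=0$ (the boundary term at $x_0$ vanishes because $1/a\in L^1$ by Lemma \ref{Lemma 2.1}), and it handles \emph{all} of the case $K_1<1$, not just $K_1+K_2<2$: on the shell $au^2\le (2/n)^2\,au^2/(x-x_0)^2$, so $I_n\le 4\int_{|x-x_0|\le 2/n}au^2/(x-x_0)^2\,dx\to0$ by absolute continuity. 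In the remaining case $K_1\ge1$ (hence the (SSD) setting of Hypothesis \ref{Ass01}, with $K_2\ge1$), do not appeal to power-type upper bounds at all: there $a,b\in W^{1,\infty}(0,1)$ vanish at $x_0$, so the Lipschitz bounds $a(x)\le \|a'\|_\infty|x-x_0|$ and $b(x)\le\|b'\|_\infty|x-x_0|$ give $n^2ab\le C$ on $S_n$, whence $I_n\le C\int_{S_n}u^2/b\,dx\to0$ since $u^2/b\in L^1$. With this two-case correction of the key step, the rest of your argument (the three dominated-convergence limits) is fine.
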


In the spirit of \cite[Lemma 5.3.1]{davies}, now we are ready for the following ``classical'' Hardy inequality in the space $H^1_{a,b}(0,1)$ for $a(x)=|x-x_0|^\alpha$ and $b(x)= |x-x_0|^{2-\alpha}$. However, note that our inequality is more interesting than the classical one, since we admit a singularity inside the interval:
\begin{Lemma}\label{hardissimo}
For every $\alpha\in \R$ the inequality
\[
\frac{(1-\alpha)^2}{4}\int_0^1 \frac{u^2}{|x-x_0|^{2-\alpha}}dx\leq  \int_0^1|x-x_0|^\alpha (u')^2dx
\]
holds true for every $u\in H^1_{|x-x_0|^\alpha,|x-x_0|^{2-\alpha}}(0,1)$.
\end{Lemma}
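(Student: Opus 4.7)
The case $\alpha=1$ is trivial since the left-hand side vanishes, so I assume $\alpha\neq 1$. The heart of the argument is the pointwise identity, valid on $(0,1)\setminus\{x_0\}$,
\[
\frac{1}{|x-x_0|^{2-\alpha}} = \frac{1}{\alpha-1}\frac{d}{dx}\bigl[(x-x_0)|x-x_0|^{\alpha-2}\bigr],
\]
obtained by differentiating $(x-x_0)^{\alpha-1}$ to the right of $x_0$ and $-(x_0-x)^{\alpha-1}$ to the left. The plan is the classical Hardy scheme: multiply this identity by $u^2$, integrate by parts on $(0,x_0)$ and $(x_0,1)$, and close with the Cauchy--Schwarz inequality.

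First I would work with a smooth test function $u$ vanishing at $0$ and $1$ and supported away from $x_0$, so that all boundary contributions vanish. Integration by parts then gives
\[
\int_0^1 \frac{u^2}{|x-x_0|^{2-\alpha}}\,dx = \frac{2}{1-\alpha}\int_0^1 u\,u'(x-x_0)|x-x_0|^{\alpha-2}\,dx.
\]
Since $\bigl|(x-x_0)|x-x_0|^{\alpha-2}\bigr|=|x-x_0|^{\alpha-1}=|x-x_0|^{\alpha/2}\cdot|x-x_0|^{(\alpha-2)/2}$, the Cauchy--Schwarz inequality produces
\[
\int_0^1 \frac{u^2}{|x-x_0|^{2-\alpha}}\,dx \leq \frac{2}{|1-\alpha|}\Bigl(\int_0^1 \frac{u^2}{|x-x_0|^{2-\alpha}}\,dx\Bigr)^{1/2}\Bigl(\int_0^1 |x-x_0|^{\alpha}(u')^2\,dx\Bigr)^{1/2},
\]
from which the claim with constant $(1-\alpha)^2/4$ follows by dividing (if the left-hand side is zero, the inequality is trivial).

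The remaining step is extending the inequality from such test functions to the whole space $H^1_{|x-x_0|^\alpha,|x-x_0|^{2-\alpha}}(0,1)$, and this is the point I expect to require the most care. When $\alpha\leq 1$, the exponent $K_2=2-\alpha\geq 1$ satisfies the hypothesis of Lemma \ref{densita}, so $H^1_c(0,1)$ is dense in the weighted space and a standard passage to the limit (using Lemma \ref{leso} to guarantee $u(x_0)=0$) closes the argument. When $\alpha>1$, the factor $|x-x_0|^{\alpha-1}$ in the boundary term of the integration by parts already vanishes as $x\to x_0$, so the identity can be obtained directly on $(0,x_0)$ and $(x_0,1)$ for any $u\in W^{1,1}_0(0,1)$ with finite right-hand side, without invoking density. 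The main obstacle is therefore the first regime: one must ensure that cutoff approximations converge simultaneously in both weighted norms $\int a(u')^2\,dx$ and $\int u^2/b\,dx$, which is exactly where $u(x_0)=0$ (Lemma \ref{leso}) and the density supplied by Lemma \ref{densita} play the essential role.
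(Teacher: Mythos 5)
Your proof is correct, and its architecture mirrors the paper's: the trivial case $\alpha=1$, a direct argument on each side of $x_0$ when $\alpha>1$ (where the boundary contribution at $x_0\pm\ve$ either has a favorable sign or vanishes, since $|x-x_0|^{\alpha-1}\to 0$ and $u$ is bounded), and a density argument via Lemma \ref{densita} (together with Lemma \ref{leso}) when $\alpha<1$, i.e. $K_2=2-\alpha>1$. The difference is the algebraic engine: the paper expands a square, writing $u'=(x-x_0)^\beta\big((x-x_0)^{-\beta}u\big)'+\beta(x-x_0)^{-1}u$ with $\beta=(1-\alpha)/2$ and discarding the sign-favorable boundary term, so the inequality falls out with no division and no a priori finiteness of the left-hand side; you instead integrate $u^2$ against $\frac{d}{dx}\big[(x-x_0)|x-x_0|^{\alpha-2}\big]$ and close with Cauchy--Schwarz, which requires dividing by $\big(\int_0^1 u^2|x-x_0|^{\alpha-2}dx\big)^{1/2}$. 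That quantity is finite by the very definition of $H^1_{|x-x_0|^\alpha,|x-x_0|^{2-\alpha}}(0,1)$ (or one first works on the truncated intervals $(0,x_0-\ve)$, $(x_0+\ve,1)$, where it is automatically finite, and then lets $\ve\to0$), so this is a point of care rather than a gap. In short, the square-expansion route is marginally more robust because it never needs the left-hand side to be finite in advance, while your route is the more familiar classical Hardy computation; both rely on the same case split and on the same density lemma for the regime $\alpha<1$.
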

\begin{proof} The case $\alpha=1$ is trivial. So, take $\beta=(1-\alpha)/2\neq0$ and $\ve\in(0,1-x_0)$. \\
{\sl First case: $\beta<0$ ($\alpha>1)$}. In this case we have
\[
\begin{aligned}
&\int_{x_0+\ve}^1(x-x_0)^\alpha(u')^2dx\\
&=\int_{x_0+\ve}^1(x-x_0)^\alpha\Big((x-x_0)^\beta\big((x-x_0)^{-\beta}u\big)'+\beta(x-x_0)^{-1}u\Big)^2dx\\
&\geq\beta^2  \int_{x_0+\ve}^1(x-x_0)^{\alpha-2}u^2dx+2\beta \int_{x_0+\ve}^1(x-x_0)^{\alpha+\beta-1}u\big((x-x_0)^{-\beta}u\big)'dx\\
&=\beta^2  \int_{x_0+\ve}^1(x-x_0)^{\alpha-2}u^2dx+\beta\big((x-x_0)^{-\beta}u)^2\Big|^1_{x_0+\ve} \mbox{ (since $\alpha+\beta-1=-\beta$)}\\
& \geq \beta^2  \int_{x_0+\ve}^1(x-x_0)^{\alpha-2}u^2dx.\\
\end{aligned}
\]

Letting $\ve\to 0^+$, we get that
\begin{equation}\label{adx}
\int_{x_0}^1(x-x_0)^\alpha(u')^2dx\geq \beta^2 \int_{x_0}^1(x-x_0)^{\alpha-2}u^2dx.
\end{equation}

{\sl Second case: $\beta>0$}. In this situation we have $2-\alpha>1$.  Thus, in view of Lemma \ref{densita} with $K_2=2-\alpha$, we will prove \eqref{adx} first if $u\in H^1_c(0,1)$ and then, by density, if $u\in H^1_{|x-x_0|^\alpha,|x-x_0|^{2-\alpha}}(0,1)$. Thus, take $u\in H^1_c(0,1)$; proceeding as above, we get
\[
\begin{aligned}
&\int_{x_0+\ve}^1(x-x_0)^\alpha(u')^2dx\\
&\geq\beta^2  \int_{x_0+\ve}^1(x-x_0)^{\alpha-2}u^2dx+\beta\big((x-x_0)^{-\alpha}u)^2\Big|^1_{x_0+\ve}\\
& \geq \beta^2  \int_{x_0+\ve}^1(x-x_0)^{\alpha-2}u^2dx,
\end{aligned}
\]
since $u(x_0+\ve)=0$ for $\ve$ small enough.

Passing to the limit as $\ve\to0^+$, and using Lemma \ref{densita}, we get that \eqref{adx} holds true for every $u\in H^1_{|x-x_0|^\alpha,|x-x_0|^{2-\alpha}}(0,1)$.

Operating in a symmetric way on the left of $x_0$, we get the conclusion.
\end{proof}

As a corollary of the previous result, we get the following improvement of Lemma \ref{L2'}.
\begin{Proposition}\label{PropH}
If one among Hypotheses $\ref{Ass0}, \ref{Ass0_1}, \ref{Ass01_1}$ holds with $K_1+K_2\leq 2$, then \eqref{1} holds for every $u\in H^1_{a,b}(0,1)$.
\end{Proposition}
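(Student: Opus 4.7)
When $K_2<1$, which is automatic under Hypothesis \ref{Ass0} (WWD) or Hypothesis \ref{Ass01_1} (SWD), the inequality is already Lemma \ref{L2'}. The only genuinely new case is thus Hypothesis \ref{Ass0_1} (WSD), in which $K_1\in(0,1)$ and $K_2\in[1,2-K_1]$; on this case I shall concentrate.

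The plan is to sandwich the weights $a$ and $b$ between pure powers of $|x-x_0|$ and then invoke Lemma \ref{hardissimo}. Setting $\alpha=K_1$, we have $\alpha\neq 1$ (since $K_1<1$), $\alpha\geq K_1$ and $2-\alpha=2-K_1\geq K_2$. Applying Lemma \ref{Lemma 2.1}, item 1, to $a$ with $\gamma=\alpha$, and to $b$ with $\gamma=2-\alpha$, yields a constant $C>0$ such that
\[
|x-x_0|^\alpha\leq C\,a(x) \quad\text{and}\quad |x-x_0|^{2-\alpha}\leq C\,b(x), \qquad x\in[0,1]\setminus\{x_0\},
\]
and in particular $1/b(x)\leq C/|x-x_0|^{2-\alpha}$.

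For every $u\in H^1_c(0,1)$, whose support is a compact subset of $(0,1)\setminus\{x_0\}$, all the weighted integrals below are automatically finite, so Lemma \ref{hardissimo} (with the above $\alpha\neq1$) combined with the two pointwise bounds gives
\[
\int_0^1\frac{u^2}{b}\,dx\leq C\int_0^1\frac{u^2}{|x-x_0|^{2-\alpha}}\,dx\leq\frac{4C}{(1-\alpha)^2}\int_0^1|x-x_0|^\alpha(u')^2\,dx\leq\frac{4C^2}{(1-\alpha)^2}\int_0^1 a(u')^2\,dx.
\]
Since $K_2\geq 1$, Lemma \ref{densita} provides the density of $H^1_c(0,1)$ in $H^1_{a,b}(0,1)$, and both sides of the previous inequality are continuous with respect to the $H^1_{a,b}$-norm; hence \eqref{1} extends to every $u\in H^1_{a,b}(0,1)$ by passing to the limit.

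The main obstacle is the impossibility of applying Lemma \ref{hardissimo} directly to a generic $u\in H^1_{a,b}(0,1)$: the membership $u\in H^1_{a,b}$ only provides $u^2/b\in L^1$, while Lemma \ref{hardissimo} requires $u^2/|x-x_0|^{2-\alpha}\in L^1$, and the two weights cannot be compared pointwise in this direction. The density route through $H^1_c(0,1)$ side-steps the integrability issue at $x_0$ entirely. Incidentally, this same scheme collapses for the excluded Hypothesis \ref{Ass01} (SSD), where $K_1,K_2\geq 1$ together with $K_1+K_2\leq 2$ force $K_1=K_2=1$, so that $\alpha=1$ becomes inevitable and the Hardy constant $(1-\alpha)^2/4$ vanishes.
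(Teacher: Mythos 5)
Your proof is correct, and it rests on the same basic mechanism as the paper's — comparison of $a$ and $b$ with pure powers of $|x-x_0|$ via Lemma \ref{Lemma 2.1}, followed by the power-weight Hardy inequality of Lemma \ref{hardissimo} — but the execution is genuinely different. The paper makes no case distinction: it applies Lemma \ref{hardissimo} with $\alpha=2-K_2$ directly to an arbitrary $u\in H^1_{a,b}(0,1)$, obtaining in one stroke the chain $\int_0^1 u^2/b\,dx\le c\int_0^1 u^2|x-x_0|^{-K_2}dx\le c\int_0^1|x-x_0|^{2-K_2}(u')^2dx\le c\int_0^1|x-x_0|^{K_1}(u')^2dx\le c\int_0^1 a(u')^2dx$. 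You instead settle the cases $K_2<1$ by Lemma \ref{L2'}, and in the (WSD) case choose $\alpha=K_1$, prove the estimate on $H^1_c(0,1)$, and pass to the limit using Lemma \ref{densita} together with the continuity of both sides in the $H^1_{a,b}$-norm. Your variant buys two things. First, Lemma \ref{hardissimo} as stated is available only for $u$ in the power-weight space $H^1_{|x-x_0|^{2-K_2},|x-x_0|^{K_2}}(0,1)$, i.e. it presupposes $\int_0^1 u^2|x-x_0|^{-K_2}dx<\infty$; this does not follow pointwise from $u^2/b\in L^1(0,1)$, since Lemma \ref{Lemma 2.1} only gives $b\ge c\,|x-x_0|^{K_2}$, so the paper's direct application leaves this membership to be justified, whereas your restriction to $H^1_c(0,1)$ (where every weighted integral is trivially finite) followed by density sidesteps the issue entirely. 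Second, at the borderline $K_2=1$ of Hypothesis \ref{Ass0_1} the paper's exponent is $\alpha=2-K_2=1$, for which the constant $(1-\alpha)^2/4$ in Lemma \ref{hardissimo} vanishes, while your choice $\alpha=K_1<1$ keeps a strictly positive constant there. What the paper's route buys is brevity and a uniform treatment of the three hypotheses; note also that your density step re-runs, at the level of $H^1_{a,b}(0,1)$, the same approximation from $H^1_c(0,1)$ that is already hidden in the proof of Lemma \ref{hardissimo} for $\alpha<1$, so the two arguments are close in substance even though yours is the more carefully quantified one.
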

\begin{proof}
By Lemma \ref{Lemma 2.1} and Lemma \ref{hardissimo} with $\alpha= 2- K_2$, we immediately get that for every $u\in H^1_{a,b}(0,1)$,
\[
\begin{aligned}
\int_0^1\frac{u^2}{b}dx&\leq c\int_0^1\frac{u^2}{|x-x_0|^{K_2}}dx\leq c\int_0^1|x-x_0|^{2-K_2}(u')^2dx \\
&\leq c\int_0^1|x-x_0|^{K_1}(u')^2dx\leq c\int_0^1a(u')^2dx.
\end{aligned}
\]
\end{proof}

\begin{Remark}
It is well known that when $K_1=K_2=1$, an inequality of the form \eqref{1} doesn't hold (see \cite{mu}). Being such an inequality fundamental for the observability inequality (see Lemma \ref{obser.regular}), it is no surprise if with our techniques we cannot handle this case in Section \ref{sec4}.
\end{Remark}

The fundamental space in which we will work is clearly the one where the Hardy--Poincar\'e--type inequality \eqref{1} holds: in view of Proposition, it is clear that such a space is 
\[
{\cal H}:=
H^1_{a,b}(0,1)
\]

\begin{Remark}\label{rem1}

Under the assumptions of Proposition \ref{PropH},  the standard norm $\|\cdot\|_{\cal H}^2$ is equivalent to
\[
\|u\|_{\circ}^2:= \int_0^1 a (u')^2 dx
\]
for all $u \in {\cal H}$. Indeed, for all $u \in {\cal H}
$, we have
\[
\int_0^1 u^2 dx = \int_0^1 b \frac{u^2}{b}dx \le c \int_0^1 a (u')^2
dx,
\]
and this is enough to conclude.

Moreover, when $\lambda <0$, an equivalent norm is given by
\[
\|u\|_{\sim}^2:= \int_0^1 a (u')^2 dx-\lambda \int_0^1 \frac{u^2}{b}dx.
\]
This is particularly useful if Hypothesis \ref{Ass01} holds (see the proof of Theorem \ref{theorem1}).
\end{Remark}

First, let us call $C^*$ the best constant of \eqref{1} in ${\cal H}$. From now on, we make the following assumptions on $a$, $b$ and $\lambda$:
\begin{Assumptions}\label{Ass03}
\begin{enumerate}
\item One among Hypothesis $\ref{Ass0}$,  $\ref{Ass0_1}$  or $\ref{Ass01_1}$ holds true with $K_1+K_2\leq 2$, and
we assume that
\begin{equation}\label{lambda}
\lambda\in \left(0,  \frac{1}{C^*}\right),
\end{equation}
or
\item Hypotheses $\ref{Ass0}$,  $\ref{Ass0_1}$, $\ref{Ass01_1}$ or $\ref{Ass01}$ hold with $\lambda<0$.
\end{enumerate}
\end{Assumptions}
Observe that the assumption $\lambda \neq 0$ is not restrictive, since the case $\lambda =0$ was already considered in \cite{fm} and in \cite{fm1}.

Using the previous lemmas one can prove the next inequality.
\begin{Proposition}\label{eq}
Assume Hypothesis $\ref{Ass03}$. Then there exists $\Lambda\in(0,1]$ such
that for all $u \in{\cal H}$
\[
\int_0^1 a(u')^2 dx - \lambda \int_0^1 \frac{u^2}{b} dx \ge \Lambda
\int_0^1 a(u')^2 dx.
\]
\end{Proposition}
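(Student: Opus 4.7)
The proof is a direct split according to the two alternatives of Hypothesis \ref{Ass03}, and the heavy lifting has already been done in Proposition \ref{PropH}, which supplies inequality \eqref{1} precisely in the relevant range of parameters. The plan is to treat the positive and the negative $\lambda$ cases separately, since the sign of $\lambda$ determines whether the Hardy--type estimate \eqref{1} actually needs to be invoked.

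In the first case of Hypothesis \ref{Ass03}, one of Hypotheses \ref{Ass0}, \ref{Ass0_1}, \ref{Ass01_1} holds with $K_1+K_2\le 2$, so Proposition \ref{PropH} applies and inequality \eqref{1} is valid on $\mathcal{H}$. By definition of $C^*$ as the best constant of \eqref{1} on $\mathcal{H}$, we have
\[
\int_0^1 \frac{u^2}{b}\,dx \le C^*\int_0^1 a(u')^2\,dx \qquad \text{for every } u\in \mathcal{H}.
\]
Since $\lambda>0$, multiplying by $\lambda$ and subtracting from $\int_0^1 a(u')^2\,dx$ yields
\[
\int_0^1 a(u')^2\,dx - \lambda\int_0^1 \frac{u^2}{b}\,dx \;\ge\; (1-\lambda C^*)\int_0^1 a(u')^2\,dx,
\]
and the assumption $\lambda\in(0,1/C^*)$ guarantees that $\Lambda:=1-\lambda C^*\in(0,1)$ is an admissible choice.

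In the second case, $\lambda<0$, so $-\lambda>0$; moreover, by the very definition of $\mathcal{H}=H^1_{a,b}(0,1)$, the function $u/\sqrt{b}$ belongs to $L^2(0,1)$, so that $\int_0^1 u^2/b\,dx$ is a well-defined nonnegative number. Consequently,
\[
\int_0^1 a(u')^2\,dx - \lambda\int_0^1 \frac{u^2}{b}\,dx \;\ge\; \int_0^1 a(u')^2\,dx,
\]
and we may take $\Lambda=1$. Note that in this case inequality \eqref{1} is not needed at all, which is exactly why Hypothesis \ref{Ass01} (SSD), not covered by Proposition \ref{PropH}, is still allowed here.

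The only point that requires attention is the positivity of the constant in the first case: $C^*$ must be finite, which is exactly the content of Proposition \ref{PropH}, and the strict inequality $\lambda C^*<1$ is built into the range \eqref{lambda}. No obstacle is expected; the statement is essentially a restatement of the best--constant inequality, together with the trivial sign observation when $\lambda<0$.
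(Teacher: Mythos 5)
Your proof is correct and follows essentially the same route as the paper: split on the sign of $\lambda$, use the best constant $C^*$ of \eqref{1} to get $\Lambda=1-\lambda C^*$ when $\lambda\in(0,1/C^*)$, and take $\Lambda=1$ trivially when $\lambda<0$. Your extra remarks on why Proposition \ref{PropH} is not needed in the (SSD) case are accurate but not essential.
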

\begin{proof} If $\lambda< 0$, the result is obvious taking $\Lambda=1$. Now, assume that $\lambda \in
\ds\left(0, \frac{1}{C^*}\right)$. Then
\[
\begin{aligned}
\int_0^1 a(u')^2 dx - \lambda \int_0^1 \frac{u^2}{b} dx
\ge \int_0^1 a(u')^2 dx - \lambda C^*\int_0^1 a(u')^2
dx \ge \Lambda\int_0^1 a(u')^2 dx.
\end{aligned}
\]
\end{proof}

We recall the following definition:
\begin{Definition}\label{def_div}
Let $u_0 \in L^2(0,1)$ and $h \in L^2(Q_T)$. A function $u$
is said to be a (weak) solution of \eqref{linear} if
\[
u \in L^2(0, T; {\cal H}) \cap H^1([0, T];{\cal H}^*)
\]
and it satisfies  \eqref{linear} in the sense of ${\cal H}^*$-valued distributions.
\end{Definition}
Note that, by \cite[Lemma 11.4]{rr}, any solution belongs to $C([0,T];L^2(0,1))$.

Finally, we introduce the Hilbert space
\[
H^2_{a, b}(0,1) :=  \Big\{ u \in H^1_a(0,1)\, :\, au' \in H^1(0,1) \mbox{ and }Au
\in L^2(0,1)\Big\},
\]
where
\[
Au:=\left(au' \right)'  + \displaystyle \frac{\lambda}{b}u \mbox{ with }D(A) =H^2_{a, b}(0,1).
\]

\begin{Remark}\label{aul2}
Observe that if $u \in D(A)$, then $\ds \frac{u}{b}$ and $\ds \frac{u}{\sqrt{b}}\in L^2(0,1)$, so that $u\in H^1_{a,b}(0,1)$ and inequality \eqref{1} holds.
\end{Remark}

We also recall the following integration by parts
with functions in the reference spaces:
\begin{Lemma}[Green formula, \cite{fggr}, Lemma 2.3]\label{green}
Assume one among the Hypo\-the\-ses $\ref{Ass0},\,\ref{Ass0_1},\,\ref{Ass01_1},\,\ref{Ass01}$. Then, for all $(u,v)\in
H^2_{a,b}(0,1)\times H^1_a(0,1)$ the following identity holds:
\begin{equation}\label{greenformula}
\int_0^1(au')' v dx= - \int_0^1 au'v' dx.
\end{equation}
\end{Lemma}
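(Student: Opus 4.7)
My plan is to prove the formula by excising small neighborhoods of the degeneracy point $x_0$, applying classical integration by parts on the complementary regions, and then passing to the limit.

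\textbf{Step 1: Truncation and classical IBP.} For $\epsilon\in(0,\min\{x_0,1-x_0\})$, split
\[
\int_0^1(au')'v\,dx=\int_0^{x_0-\epsilon}(au')'v\,dx+\int_{x_0-\epsilon}^{x_0+\epsilon}(au')'v\,dx+\int_{x_0+\epsilon}^1(au')'v\,dx.
\]
On each of the outer subintervals, $a$ is continuous and strictly positive (so bounded below by some $c_\epsilon>0$), so $|v'|\le c_\epsilon^{-1/2}|\sqrt{a}\,v'|\in L^2$, giving $v\in H^1$ on each closed piece. Since $au'\in H^1(0,1)$ also restricts to $H^1$ on each piece, the standard integration by parts formula applies, and using $v(0)=v(1)=0$ (which holds because $v\in W^{1,1}_0(0,1)\hookrightarrow C([0,1])$) we get
\[
\int_0^{x_0-\epsilon}(au')'v\,dx+\int_{x_0+\epsilon}^1(au')'v\,dx=B(\epsilon)-\int_{(0,x_0-\epsilon)\cup(x_0+\epsilon,1)}au'v'\,dx,
\]
where $B(\epsilon)=(au')(x_0-\epsilon)v(x_0-\epsilon)-(au')(x_0+\epsilon)v(x_0+\epsilon)$.

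\textbf{Step 2: Vanishing of the interior boundary term.} Both $au'\in H^1(0,1)$ and $v\in W^{1,1}_0(0,1)$ admit continuous representatives on $[0,1]$; in particular $au'$ is genuinely continuous at $x_0$, so its left and right limits coincide. Hence
\[
B(\epsilon)\xrightarrow[\epsilon\to 0^+]{}(au')(x_0)v(x_0)-(au')(x_0)v(x_0)=0.
\]

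\textbf{Step 3: Vanishing of the central integrals.} Since $au'\in H^1(0,1)$, we have $(au')'\in L^2(0,1)$, and since $v$ is bounded, Cauchy--Schwarz yields
\[
\left|\int_{x_0-\epsilon}^{x_0+\epsilon}(au')'v\,dx\right|\le\|v\|_\infty\sqrt{2\epsilon}\,\|(au')'\|_{L^2(x_0-\epsilon,x_0+\epsilon)}\to 0.
\]
Likewise, writing $au'v'=\sqrt{a}\,u'\cdot\sqrt{a}\,v'$ with both factors in $L^2(0,1)$,
\[
\left|\int_{x_0-\epsilon}^{x_0+\epsilon}au'v'\,dx\right|\le\left(\int_{x_0-\epsilon}^{x_0+\epsilon}a(u')^2\,dx\right)^{1/2}\left(\int_{x_0-\epsilon}^{x_0+\epsilon}a(v')^2\,dx\right)^{1/2}\to 0
\]
by absolute continuity of the integral. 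Combining Steps 1--3 gives \eqref{greenformula}.

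\textbf{Main obstacle.} The delicate point is the vanishing of $B(\epsilon)$: this relies crucially on the fact that $au'$ (not $u'$ alone) has a \emph{continuous} representative at $x_0$, which is built into the definition of $H^2_{a,b}(0,1)$ by requiring $au'\in H^1(0,1)$. Without this built-in regularity, the one-sided limits could disagree and the interior boundary term would contribute a genuine ``jump at $x_0$'' that is not killed by the continuity of $v$. A secondary concern is uniformity across the four hypotheses \ref{Ass0}--\ref{Ass01}: one must note that in all cases $a\in W^{1,1}(0,1)\hookrightarrow C([0,1])$ with $a>0$ off $x_0$, so the bound $a\ge c_\epsilon>0$ used in Step 1 is available regardless of whether the degeneracy is weak or strong.
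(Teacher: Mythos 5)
Your proof is correct. Note that the paper itself gives no proof of this lemma, simply quoting it from \cite{fggr} (Lemma 2.3); the argument there is essentially the truncation procedure you use, and your write-up isolates exactly the right points: classical integration by parts on $[0,x_0-\epsilon]$ and $[x_0+\epsilon,1]$ where $a$ is bounded away from zero, cancellation of the interior boundary terms because $au'\in H^1(0,1)\hookrightarrow C([0,1])$ and $v\in W^{1,1}_0(0,1)$ are both continuous at $x_0$, and absolute continuity of the integrals of $(au')'v\in L^1$ and $\sqrt{a}u'\cdot\sqrt{a}v'\in L^1$ to dispose of the central strips, all of which is available under each of Hypotheses \ref{Ass0}--\ref{Ass01}.
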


Observe that in the non degenerate case, it is well known that the heat operator with an inverse--square singular potential
\[
u_t-\Delta u-\lambda \frac{u}{|x|^2}v
\]
gives rise to well--posed Cauchy-Dirichlet problems if and only if $\lambda$ is not larger than the best Hardy inequality (see \cite{bg1}, \cite{CaMa}, \cite{VaZu}). For this reason, it is not strange that we require an analogous condition for problem \eqref{linear}, by invoking Hypothesis \ref{Ass03}; as a consequence, using the standard semigroup theory, we have that \eqref{linear} is well--posed:
\begin{Theorem}\label{theorem1}
Assume Hypothesis $\ref{Ass03}$. For every $u_0\in L^2(0,1)$ and $h\in L^2(Q_T)$ there exists  a unique solution of problem \eqref{linear}.
In particular, the ope\-ra\-tor $A: D(A) \to
L^2(0,1)$ is non positive and self-adjoint in $L^2(0,1)$ and it gene\-rates an analytic contraction
semigroup of angle $\pi/2$. Moreover, let $u_0 \in D(A)$; then
\[
\begin{aligned}
h\in W^{1,1}(0,T;L^2(0,1)) &\Rightarrow u \in C^1(0,T; L^2(0,1)) \cap C([0,T];D(A)),\\
h\in L^2(Q_T) &\Rightarrow u \in H^1(0,T;L^2(0,1)).
\end{aligned}
\]
\end{Theorem}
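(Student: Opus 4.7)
The plan is to realize $A$ as the self-adjoint operator associated with the symmetric, closed, densely defined, bounded-below bilinear form
\[
\mathfrak{a}(u,v):=\int_0^1 a u'v'\,dx-\lambda\int_0^1\frac{uv}{b}\,dx,\qquad u,v\in\mathcal{H},
\]
and then to invoke the standard spectral/semigroup machinery. First I would check that $\mathfrak{a}$ is continuous and coercive on $\mathcal{H}$: continuity follows directly from the definition of the inner products in $H^1_{a,b}(0,1)$, while coercivity is exactly Proposition \ref{eq} together with Remark \ref{rem1}, which gives the equivalence between $\|\cdot\|_{\mathcal{H}}$ and either $\|\cdot\|_\circ$ (when $\lambda<0$, using also $\|\cdot\|_\sim$) or the form $\mathfrak{a}$ itself up to a lower-order term that can be absorbed. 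Note that under Hypothesis \ref{Ass03}(2) in the SSD case, Proposition \ref{PropH} does not apply, but Remark \ref{rem1} provides the equivalent norm $\|\cdot\|_\sim$ and one does not need inequality \eqref{1}: the form is coercive for free since $\lambda<0$.

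Next I would identify the operator associated with $\mathfrak{a}$ with $A$ on $D(A)=H^2_{a,b}(0,1)$. In one direction, for $u\in D(A)$ and $v\in\mathcal{H}$, Lemma \ref{green} gives
\[
\int_0^1(-Au)v\,dx=\int_0^1 a u'v'\,dx-\lambda\int_0^1\frac{uv}{b}\,dx=\mathfrak{a}(u,v),
\]
which shows $A$ is symmetric and that $-A\subset$ (form operator). In the reverse direction, given $f\in L^2(0,1)$, Lax--Milgram applied to $\mathfrak{a}(u,v)+\mu\int_0^1 uv\,dx=\int_0^1 fv\,dx$ (for $\mu$ large enough in the case $\lambda>0$, and $\mu=0$ if $\lambda<0$) produces a unique $u\in\mathcal{H}$. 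Choosing $v\in C_c^\infty((0,1)\setminus\{x_0\})$ shows that $(au')'+\lambda u/b=\mu u - f$ in the distributional sense, which together with $f-\mu u,\ \lambda u/b\in L^2(0,1)$ (by Remark \ref{aul2} and the defining integrability of $u/\sqrt b$ for elements of $\mathcal{H}$) forces $au'\in H^1(0,1)$ and $Au\in L^2(0,1)$; hence $u\in D(A)$. This proves that $\mu I-A:D(A)\to L^2(0,1)$ is bijective, i.e., $A$ is maximal.

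From the combination ``$A$ symmetric, bounded above, and $\mu I-A$ surjective'' one concludes that $A$ is self-adjoint and non-positive; in particular $\langle-Au,u\rangle=\mathfrak{a}(u,u)\ge 0$ gives the contractivity, and by the spectral theorem any non-positive self-adjoint operator on a Hilbert space generates an analytic contraction semigroup of angle $\pi/2$ on $L^2(0,1)$. This takes care of the first two assertions. The density of $D(A)$ in $L^2(0,1)$ follows because $D(A)$ contains $H^1_c(0,1)$ (hence $C_c^\infty((0,1)\setminus\{x_0\})$), which is dense in $L^2(0,1)$.

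The regularity statements are then immediate from the general theory of analytic semigroups: for $u_0\in D(A)$ and $h\in W^{1,1}(0,T;L^2(0,1))$, the mild solution $u(t)=e^{tA}u_0+\int_0^t e^{(t-s)A}h(s)\,ds$ is a classical solution with $u\in C^1((0,T];L^2(0,1))\cap C([0,T];D(A))$ (see, e.g., Pazy); analogously, the maximal-regularity result for analytic semigroups on Hilbert spaces yields $u\in H^1(0,T;L^2(0,1))$ when $h\in L^2(Q_T)$. The main technical obstacle I expect is not the semigroup step but the rigorous passage from the Lax--Milgram weak solution to the domain characterization $au'\in H^1(0,1)$ in the non-smooth regime, which has to be done carefully at $x_0$; this is where Remark \ref{aul2} and the inequality \eqref{1} play the decisive role to guarantee $\lambda u/b\in L^2(0,1)$ and thus $(au')'\in L^2(0,1)$.
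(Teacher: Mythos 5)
Your overall route---realizing $-A$ as the operator associated with the coercive symmetric form $\mathfrak{a}(u,v)=\int_0^1 au'v'\,dx-\lambda\int_0^1\frac{uv}{b}\,dx$ on ${\cal H}$ (coercivity being Proposition \ref{eq} plus Remark \ref{rem1}), using Lemma \ref{green} for the inclusion of $D(A)$ in the form domain, Lax--Milgram for maximality, and abstract analytic-semigroup theory for the generation and regularity statements---is essentially the paper's own proof, which packages the same ingredients through the bounded solution operator $T$ and then sets $A=T^{-1}$ (the parabolic well-posedness is obtained there by Faedo--Galerkin rather than through the mild solution, a cosmetic difference). The problem lies precisely at the point you yourself single out as decisive, where your justification does not work.

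To conclude that the Lax--Milgram solution $u\in{\cal H}$ belongs to $D(A)=H^2_{a,b}(0,1)$ you need $\lambda u/b\in L^2(0,1)$ and $au'\in H^1(0,1)$ on all of $(0,1)$. You assert $\lambda u/b\in L^2$ ``by Remark \ref{aul2} and the defining integrability of $u/\sqrt{b}$'': but Remark \ref{aul2} presupposes $u\in D(A)$, which is exactly what is being proved, so its use here is circular, and membership in ${\cal H}$ only gives $\int_0^1 u^2/b\,dx<+\infty$, which does not imply $\int_0^1 u^2/b^2\,dx<+\infty$ (a strictly stronger requirement; compare the remark following Lemma \ref{L2'}, which yields an estimate on $\int_0^1 u^2/b^2\,dx$ only under $K_1+2K_2\le 2$ and $K_2<1/2$). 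Moreover, testing only with $v\in C_c^\infty\big((0,1)\setminus\{x_0\}\big)$ identifies $(au')'$ only on $(0,1)\setminus\{x_0\}$ and cannot exclude a Dirac mass at $x_0$, i.e.\ a jump of $au'$ across the degeneracy point; ruling this out requires admissible test functions not vanishing at $x_0$ (available when $K_2<1$, since then $1/b\in L^1$) or, when $K_2\ge 1$, an argument through Lemma \ref{leso} and the density Lemma \ref{densita}. A smaller issue of the same nature: your density argument claims $H^1_c(0,1)\subset D(A)$, but with $a$ merely in $W^{1,1}(0,1)$ one only gets $(au')'=a'u'+au''\in L^1$ for such functions, so the inclusion is unjustified; density of $D(A)$ should instead come from the abstract form framework or, as in the paper, from the self-adjointness and injectivity of the bounded operator $T$. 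To be fair, the paper itself passes over the identification of the range of $T$ with $H^2_{a,b}(0,1)$ very quickly, but as written the specific reasoning you offer for this decisive step would fail.
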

\begin{proof}Observe that $D(A)$ is dense in $L^2(0,1)$. The existence of the unique solution follows in a standard way by a Faedo--Galerkin procedure, see, e.g., \cite[Theorem 11.3]{rr}, or \cite[Theorem 3.4.1 and Remark 3.4.3]{LM}. Let us prove the other facts.

\textbf{$\boldsymbol {A}$ is non positive.} By Proposition \ref{eq}, Remark \ref{rem1} and Lemma \ref{green},
for all $u \in D(A)$ we have
\[
\begin{aligned}
- \langle Au, u \rangle_{L^2(0,1)} \!=\! -\int_0^1\!\left( (au')' +
\frac{\lambda}{b}u\right)u \,dx\! =\! \int_0^1 \!a (u')^2 dx - \lambda
\int_0^1\! \frac{u^2}{b}dx
 \ge C
\|u\|_{\cal H}^2.
\end{aligned}
\]

\textbf{$\boldsymbol{ A}$ is self-adjoint.} Let $T:L^2(0,1)\to L^2(0,1)$ be the mapping defined in the following usual way: to each $h\in L^2(0,1)$ associate the weak solution $u=T(h)\in {\cal H}$ of 
\[
\int_0^1 \left(au'v' - \lambda \frac{u v}{b}
\right) dx= \int_0^1 h v \,dx
\]
for every $v\in {\cal H}$. Note that $T$ is well defined by the Lax--Milgram Lemma via Proposition \ref{eq}, which also implies that $T$ is continuous. Now, it is easy to see that $T$ is injective and symmetric. Thus it is self--adjoint. As a consequence, $A=T^{-1}:D(A)\to L^2(0,1)$ is self--adjoint (for example, see \cite[Proposition A.8.2]{Taylor}).

\textbf{$\boldsymbol{A}$ is $\boldsymbol{m}$--dissipative}. Being $A$ non positive and self--adjoint, this is a straightforward consequence of \cite[Corollary 2.4.8]{ch}. Then $(A, D(A))$ generates a cosine family
and an analytic contractive semigroup of angle $\ds\frac{\pi}{2}$ on
$L^2(0,1)$ (see, for instance, \cite[Examples 3.14.16 and
3.7.5]{abhn}).

The additional regularity is a consequence of \cite[Lemma 4.1.5 and Proposition 4.1.6]{ch} in the first case, and of \cite[6.2.2 and 6.2.4]{arendt} in the second one.
\end{proof}

\section{Carleman estimates for singular/degenerate
problems}\label{sec3}

In this section we prove one of the main result of this paper, i.e.
a new Carleman estimate with boundary terms for solutions of the
singular/degenerate problem
\begin{equation}\label{P-adjoint}
\begin{cases}
v_t +\left(av_x \right) _x  + \displaystyle \frac{\lambda}{b(x)}v=h(t,x)=h, & (t,x) \in Q_T,\\
v(t,0)=v(t,1)=0, &t \in (0,T),\\
v(T,x) = v_T(x),
\end{cases}
\end{equation}
which is the adjoint of problem
\eqref{linear}. 

On the degenerate function $a$ we make the following assumption:
\begin{Assumptions}\label{Ass02}
Hypothesis $\ref{Ass03}$ holds. Moreover, if $K_1 > \displaystyle \frac{4}{3}$, then 
there exists a constant $\theta \in \left(0, K_1\right]$ such that
\begin{equation}\label{dainfinito_1}
\begin{array}{ll}
x \mapsto \dfrac{a(x)}{|x-x_0|^{\theta}} &
\begin{cases}
& \mbox{ is non increasing on the left of $x=x_0$,}\\
& \mbox{ is non decreasing on the right of $x=x_0$}.
\end{cases}
\end{array}
\end{equation}
In addition, when $ K_1 >\displaystyle \frac{3}{2}$ the function in  \eqref{dainfinito_1}  is bounded below away from $ 0$
and there exists a constant $\Sigma>0$ such that
\begin{equation}\label{Sigma}
|a'(x)|\leq \Sigma |x-x_0|^{2\theta-3} \mbox{ for a.e. }x\in
[0,1].
\end{equation}
Moreover, if $\lambda <0$ we require that 
\begin{equation}\label{ipob}
(x-x_0)b'(x) \ge 0  \text{ in } [0,1].
\end{equation}
\end{Assumptions}

\begin{Remark}
If $a(x)= |x-x_0|^{K_1}$, then \eqref{dainfinito_1} is clearly satisfied with $\theta=K_1$. Moreover, the additional requirements for the sub-case $ K_1 >\displaystyle \frac{3}{2}$
are technical
ones and are introduced in \cite{fm1} to guarantee the convergence of some
integrals (see \cite[Appendix]{fm1}). Of course,
the prototype $a(x)=|x-x_0|^{K_1}$ satisfies again such conditions with
$\theta=K_1$. Finally, \eqref{ipob} is clearly satisfied by the prototype $b(x)= |x-x_0|^{K_2}$.
\end{Remark}

To prove Carleman estimate, let us introduce the function $\varphi:= \Theta \psi$, where
\begin{equation}\label{c_1}
\Theta(t):=\frac{1}{[t(T-t)]^4} \quad \text{and} \quad \psi(x) :=
c_1\left[\int_{x_0}^x \frac{y-x_0}{a(y)}dy- c_2\right],
\end{equation}
where $c_2> \displaystyle \ds\sup_{[0,1]}\int_{x_0}^x \frac{y-x_0}{a(y)}dy$ and $c_1>0$ (for the observability inequality $c_1$ will be taken sufficiently large, see Lemma \ref{lemma3}). Observe that $\Theta (t) \rightarrow + \infty \,
\text{ as } t \rightarrow 0^+, T^-$, and clearly $-c_1c_2 \le \psi < 0$.

The main result of this section is the following
\begin{Theorem}\label{Cor1}
Assume Hypothesis $\ref{Ass02}$. Then, there exist two positive
constants $C$ and $s_0$, such that every solution $v$ of
\eqref{P-adjoint} in
\begin{equation}\label{v}
\mathcal{V}:=L^2\big(0, T; H^2_{a,b}(0,1)\big) \cap H^1\big(0,
T;{\cal H}\big)
\end{equation}
satisfies, for all $s \ge s_0$,
\[
\begin{aligned}
&\int_{Q_T} \left(s\Theta a(v_x)^2 + s^3 \Theta^3
\frac{(x-x_0)^2}{a} v^2\right)e^{2s\varphi}dxdt\\
&\le C\left(\int_{Q_T} h^2e^{2s\varphi}dxdt +
sc_1\int_0^T\left[a\Theta e^{2s \varphi(t,x)}(x-x_0)(v_x)^2
dt\right]_{x=0}^{x=1}\right).
\end{aligned}
\]
\end{Theorem}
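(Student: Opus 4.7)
The plan is to adapt the Fursikov--Imanuvilov scheme to our setting, following the degenerate computations of \cite{fm,fm1} and handling carefully the new contributions coming from the singular potential. First I would conjugate via $w:=e^{s\varphi}v$: a direct calculation gives $L_s w:=e^{s\varphi}(v_t+(av_x)_x+\lambda v/b)=L_s^+ w+L_s^- w$, where
\[
L_s^+ w:=(aw_x)_x+s^2 a\varphi_x^2\, w - s\varphi_t w + \frac{\lambda}{b}w
\]
is symmetric in $L^2(Q_T)$ (with respect to the homogeneous Dirichlet conditions in $x$ and the vanishing at $t=0,T$, which is free since $\Theta\to+\infty$ at the endpoints) and $L_s^- w:=w_t-2s a\varphi_x w_x-s(a\varphi_x)_x w$ is antisymmetric. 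Polarising
\[
\|e^{s\varphi}h\|_{L^2(Q_T)}^2=\|L_s^+ w\|^2+\|L_s^- w\|^2+2\langle L_s^+ w,L_s^- w\rangle
\]
reduces the theorem to bounding $2\langle L_s^+ w,L_s^- w\rangle$ from below by the required distributed integrals, modulo boundary terms at $x=0,1$.

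Next I would expand the twelve resulting scalar products and integrate by parts in $t$ and in $x$, invoking Lemma \ref{green} and the regularity built into $\mathcal V$ to legitimise the manipulations in our non-smooth framework. Two structural features of the weight $\varphi=\Theta\psi$ are decisive: $a\varphi_x=c_1\Theta(t)(x-x_0)$ is globally Lipschitz in $x$ across $x_0$, so all boundary contributions at $x_0^\pm$ cancel when one integrates separately on $(0,x_0)$ and $(x_0,1)$; and $(a\varphi_x)_x=c_1\Theta(t)$ is independent of $x$, which collapses several terms to clean expressions. The purely ``degenerate'' contributions (those not involving $\lambda/b$) then reproduce exactly the distributed integrals $c_1 s\int_{Q_T}\Theta a w_x^2$ and $c_1^3 s^3\int_{Q_T}\Theta^3(x-x_0)^2 w^2/a$ together with lower-order remainders and the boundary integral at $x=0,1$, as in \cite{fm1}; the refinements of Hypothesis \ref{Ass02} for $K_1>3/2$ enter here, exactly as in \cite[Appendix]{fm1}, to make the integrals converge at $x_0$ in the low-regularity case.

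The genuinely new part is the treatment of the singular term. Crossing $\lambda w/b$ against the three summands of $L_s^- w$ produces: (i) from the $w_t$-factor, only time-boundary contributions that vanish because $\Theta\to+\infty$ at $t=0,T$; (ii) from the $-2sa\varphi_x w_x$-factor, a term of the shape $-2sc_1\lambda\int \Theta (x-x_0)/b\cdot w w_x$, which after a further integration by parts in $x$ becomes an integral of $\int\Theta w^2/b$-type (no boundary contribution at $x_0$, by Lemma \ref{Lemma 2.1}, since $(x-x_0)/b$ behaves like $|x-x_0|^{1-K_2}$); and (iii) a clean $-sc_1\lambda\int_{Q_T}\Theta w^2/b$ arising from the fact that $(a\varphi_x)_x=c_1\Theta$. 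All these $\int\Theta w^2/b$ pieces are then absorbed into a fraction of $c_1 s\int \Theta a w_x^2$ via Proposition \ref{eq}, i.e.\ via the Hardy--Poincar\'e inequality \eqref{1} and the standing assumption $\lambda<1/C^\ast$ (respectively $\lambda<0$). Reverting from $w$ to $v$ through $w_x=e^{s\varphi}(v_x+s\varphi_x v)$ and using Young's inequality to absorb the resulting cross products for $s\ge s_0$ yields the estimate in the form stated.

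The main obstacle is precisely this absorption step: were $\lambda$ allowed to exceed $1/C^\ast$, Proposition \ref{eq} would cease to dominate the new $\int w^2/b$ contributions and the estimate would collapse — this is exactly the parabolic analogue of the sharpness of the Hardy constant seen in \cite{vz2,e}. The secondary technical obstacle, legitimising the various integrations by parts at $x_0$ when $a$ is only $W^{1,\infty}$ with $K_1>3/2$, is handled by the boundedness-from-below condition in \eqref{dainfinito_1} and the pointwise bound \eqref{Sigma} on $a'$ built into Hypothesis \ref{Ass02}, in the same spirit as in \cite{fm1}.
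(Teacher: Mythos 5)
Your proposal follows essentially the same route as the paper's proof: the conjugation $w=e^{s\varphi}v$, the splitting $L_s w=L_s^+w+L_s^-w$ with the product $\langle L_s^+w,L_s^-w\rangle$ expanded into distributed and boundary terms as in \cite{fm,fm1}, the new singular contribution reduced (after the integrations by parts, using $(x-x_0)b'\le K_2b$ for $\lambda>0$ and \eqref{ipob} for $\lambda<0$) to a $\int\Theta w^2/b$--type term absorbed through the Hardy--Poincar\'e inequality \eqref{1}, and the final return from $w$ to $v$. The only cosmetic difference is that the paper absorbs this term by the rescaling trick in $s$ (taking the constant $C$ as large as needed) rather than by invoking $\lambda<1/C^*$ through Proposition \ref{eq}, but both are legitimate under Hypothesis \ref{Ass02}.
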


\begin{Remark}
In \cite{vz2} the authors prove a related Carleman inequality for the {\em non degenerate} singular 1-D problem
\begin{equation}\label{vancozu}
\begin{cases}
\ds v_t+v_{xx}+\frac{\mu}{x^2}+\frac{\lambda}{x^\beta}v=h & (t,x)\in Q_T,\\
v(t,0)=v(t,1)=0& t\in (0,T),\\
v(T,x)=v_T(x)& x\in (0,1),
\end{cases}
\end{equation}
where $\beta\in[0,2)$. When $\mu=0$ and $x_0=0$, such an inequality reads as follows:
\[
\int_{Q_T} \left(s^3\Theta^3 x^2v^2 +\frac{s}{2}\Theta \frac{v^2}{x^2}+\frac{s}{2} \Theta \frac{v^2}{x^{2/3}}\right)e^{2s\Psi}dxdt\leq \frac{1}{2} \int_{Q_T} h^2e^{2s\Psi}dxdt,
\]
where $\Psi(x)= \ds\frac{x^2}{2}-1<0$ in $[0,1]$. Actually, it is proved for solutions $v$ such that
\begin{equation}\label{suppcomp}
\mbox{$v(t, x) = 0 $ for all $(t, x)\in (0, T ) \times (1- \eta, 1)$ for some $\eta\in (0,1)$}.
\end{equation}
However, in \cite[Remark 3.5]{vz2} the authors say that Carleman estimates can be proved also for all solutions of \eqref{vancozu} not satisfying \eqref{suppcomp}. We think that this latter situation is much more interesting, since by the Carleman estimates, if $h=0$, then $v\equiv0$ even if \eqref{suppcomp} does not hold.
\end{Remark}

The proof of Theorem \ref{Cor1} is quite long, and several
intermediate lemmas will be used. First, for $s> 0$, define the function
\[
w(t,x) := e^{s \varphi (t,x)}v(t,x),
\]
where $v$ is any solution of \eqref{P-adjoint} in $\mathcal{V}$;
observe that, since $v\in\mathcal{V}$ and $\vp<0$, then
$w\in\mathcal{V}$ and satisfies
\begin{equation}\label{1'}
\begin{cases}
(e^{-s\varphi}w)_t + \left(a(e^{-s\varphi}w)_x \right) _x+ \lambda \displaystyle\frac{e^{-s\varphi}w} {b} =h, & (t,x) \in (0,T) \times (0,1),\\
w(t,0)=w(t,1)=0, &  t \in (0,T),\\ w(T,x)= w(0, x)= 0, & x \in
(0,1).
\end{cases}
\end{equation}
As usual, we re--write the previous problem as
follows: setting
\[
Lv:= v_t + (av_x)_x + \lambda \frac{v}{b} \quad \text{and} \quad
L_sw= e^{s\varphi}L(e^{-s\varphi}w),
\]
then \eqref{1'} becomes
\[
\begin{cases}
L_sw= e^{s\varphi}h,\\
w(t,0)=w(t,1)=0, & t \in (0,T),\\
w(T,x)= w(0, x)= 0, & x \in (0,1).
\end{cases}
\]
Computing $L_sw$, one has
\[
\begin{aligned}
L_sw
=L^+_sw + L^-_sw,
\end{aligned}
\]
where
\[
L^+_sw := (aw_x)_x + \lambda\frac{w}{b}
 - s \varphi_t w + s^2a \varphi_x^2 w,
\]
and
\[
L^-_sw := w_t -2sa\varphi_x w_x -
 s(a\varphi_x)_xw.
\]

Of course,
\begin{equation}\label{stimetta}
\begin{aligned}
2\langle L^+_sw, L^-_sw\rangle &\le 2\langle L^+_sw, L^-_sw\rangle+
\|L^+_sw \|_{L^2(Q_T)}^2 + \|L^-_sw\|_{L^2(Q_T)}^2\\
& =\| L_sw\|_{L^2(Q_T)}^2= \|he^{s\varphi}\|_{L^2(Q_T)}^2,
\end{aligned}
\end{equation}
where $\langle\cdot, \cdot \rangle$ denotes the scalar product in
$L^2(Q_T)$. As usual, we will separate the scalar product $\langle
L^+_sw, L^-_sw\rangle$ in distributed terms and boundary terms.

\begin{Lemma}\label{lemma1}
The following identity holds:
\begin{equation}\label{D&BT}
\begin{aligned}
& \left.
\begin{aligned}
&\langle L^+_sw,L^-_sw\rangle \;\\&=\; \frac{s}{2} \int_{Q_T}
\varphi_{tt} w^2dxdt 
- 2s^2 \int_{Q_T}a \varphi_x \varphi_{tx}w^2dxdt \\& +s
\int_{Q_T}(2 a^2\varphi_{xx} + aa'\varphi_x)(w_x)^2 dxdt
\\&+ s^3 \int_{Q_T}(2a \varphi_{xx} + a'\varphi_x)a
(\varphi_x)^2 w^2dxdt
\\&-s\lambda \int_{Q_T}
\frac{a\varphi_xb'}{b^2} w^2 dxdt
\end{aligned}\right\}\;\text{\{D.T.\}} \\
& \left.
\begin{aligned}
& + \int_0^T[aw_xw_t]_{x=0}^{x=1} dt- \frac{s}{2}
\int_0^1[w^2\varphi_t]_{t=0}^{t=T}dx+ \frac{s^2}{2}\int_0^1
[a(\varphi_x)^2 w^2]_{t=0}^{t=T}dt\\& + \int_0^T[-s\varphi_x
(aw_x)^2 +s^2a\varphi_t \varphi_x w^2 - s^3 a^2(\varphi_x)^3w^2-
s\lambda \frac{a\varphi_x }{b} w^2]_{x=0}^{x=1}dt
\\&+ \int_0^T[-sa(a\varphi_x)_xw
w_x]_{x=0}^{x=1}dt-\frac{1}{2} \int_0^1
\Big[a(w_x)^2-\lambda\frac{1}{2b}w^2 \Big]_{t=0}^{t=T}dx.
\end{aligned}\right\}\; \text{\{B.T.\}}
\end{aligned}
\end{equation}
\end{Lemma}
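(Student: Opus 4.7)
The plan is to expand the inner product $\langle L^+_s w, L^-_s w\rangle_{L^2(Q_T)}$ into the twelve cross terms coming from
\[
L^+_s w = (aw_x)_x + \lambda\frac{w}{b} - s\varphi_t w + s^2 a\varphi_x^2 w,\qquad L^-_s w = w_t - 2sa\varphi_x w_x - s(a\varphi_x)_x w,
\]
then reduce each of them via integration by parts in $t$ (using $w(0,x)=w(T,x)=0$, though kept as formal evaluations $[\cdot]_{t=0}^{t=T}$) or in $x$ (using the Green formula in Lemma \ref{green} together with $w(t,0)=w(t,1)=0$ when $v$ vanishes there, keeping the remaining spatial boundary evaluations $[\cdot]_{x=0}^{x=1}$). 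The output is the sum of the distributed integrals forming $\{\text{D.T.}\}$ and the one-dimensional contributions forming $\{\text{B.T.}\}$.

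I would process the twelve terms in four blocks, one for each component of $L^+_s w$. For the $(aw_x)_x$ block, the pairing with $w_t$ yields the boundary term $\int_0^T [aw_x w_t]_0^1 dt$ after a $t$-in-time/$x$-in-space integration by parts (the distributed part cancels since $\int\int (aw_x)_x w_t = \frac{1}{2}\frac{d}{dt}\int a w_x^2$ up to a boundary contribution using $a_t=0$); the pairing with $-2sa\varphi_x w_x$ produces, after integration by parts in $x$, the term $s\int (2a^2\varphi_{xx}+aa'\varphi_x)(w_x)^2$ plus the boundary piece $-s\int[ \varphi_x (aw_x)^2]_0^1$; and the pairing with $-s(a\varphi_x)_x w$ gives the boundary term $-s\int[a(a\varphi_x)_x w w_x]_0^1$ after one more IBP. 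For the $\lambda w/b$ block, the crucial computation is the pairing with $-2sa\varphi_x w_x$: integrating by parts in $x$ and using $(w^2)_x = 2ww_x$ produces $s\lambda\int (a\varphi_x/b)_x w^2$, and expanding $(a\varphi_x/b)_x$ generates precisely $-s\lambda\int a\varphi_x b'/b^2 \, w^2$ plus a contribution absorbed into the $s^3$/$s$ blocks coming from $(a\varphi_x)_x$ (the term $-s(a\varphi_x)_x w\cdot \lambda w/b$ supplies the symmetric companion that cancels the unwanted piece), leaving the clean $b'/b^2$ distributed term together with the boundary evaluation $-s\lambda\int[a\varphi_x w^2/b]_0^1$. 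For the $-s\varphi_t w$ block, the $w_t$ pairing yields $\frac{s}{2}\int\varphi_{tt}w^2 - \frac{s}{2}\int[\varphi_t w^2]_{t=0}^{t=T}$ after IBP in $t$, while the $-2sa\varphi_x w_x$ pairing gives, after IBP in $x$, the distributed $s^2\int(a\varphi_t\varphi_x)_x w^2$, from which the combination with the other $s^2$ contribution produces $-2s^2\int a\varphi_x\varphi_{tx}w^2$ and the boundary trace $s^2\int[a\varphi_t\varphi_x w^2]_0^1$; the remaining term with $-s(a\varphi_x)_x w$ contributes a symmetric distributed correction. Finally, the $s^2 a\varphi_x^2 w$ block, paired with $w_t$, generates $-\frac{s^2}{2}\int[a\varphi_x^2 w^2]_{t=0}^{t=T}$ via the time derivative of $\varphi_x^2$, and paired with the two remaining pieces of $L^-_s w$, produces the distributed $s^3\int(2a\varphi_{xx}+a'\varphi_x)a\varphi_x^2 w^2$ plus the boundary $-s^3\int[a^2\varphi_x^3 w^2]_0^1$ after one more IBP in $x$.

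Collecting these contributions and matching the signs gives exactly the $\{\text{D.T.}\}$ and $\{\text{B.T.}\}$ listed in \eqref{D&BT}. Care must be taken to ensure that each integration by parts is admissible: since $v\in\mathcal{V}$ and $\varphi$ is smooth away from $x_0$, all manipulations involving $a,\varphi_x$ are justified on each side of $x_0$, and the integrability of the singular terms near $x_0$ (needed to pass to the limit in the usual cut--off approximation around $x_0$) is guaranteed by Remark \ref{aul2} and the properties of $\Theta\psi$ encoded in \eqref{c_1} together with Hypothesis \ref{Ass02}.

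The step I expect to be the main obstacle is the bookkeeping of the singular block involving $\lambda w/b$: the pairing with $-2sa\varphi_x w_x$ alone produces, after IBP in $x$, a term $s\lambda\int (a\varphi_x)_x w^2/b$, and only when it is combined with $\langle \lambda w/b,\,-s(a\varphi_x)_x w\rangle$ does the unwanted symmetric piece cancel, leaving the net distributed contribution $-s\lambda\int a\varphi_x b'/b^2 \, w^2$ and the boundary evaluation $-s\lambda\int_0^T[a\varphi_x w^2/b]_0^1 dt$ appearing in \eqref{D&BT}. The rest is a careful but standard bookkeeping of integration by parts, exactly as in the purely degenerate setting of \cite{fm1}, now supplemented by the terms involving $b$.
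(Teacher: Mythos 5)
Your proposal is correct and follows essentially the same route as the paper: expand the cross terms of $\langle L^+_s w,L^-_s w\rangle$ and integrate by parts in $t$ and $x$ (the non--singular part being the standard computation of \cite{fm,fm1}, using $(a\varphi_x)_{xx}=0$), with the only new ingredient being the singular block, where pairing $\lambda w/b$ with both $-2sa\varphi_x w_x$ and $-s(a\varphi_x)_x w$ cancels the $(a\varphi_x)_x/b$ pieces and leaves $-s\lambda\int_{Q_T} a\varphi_x b' w^2/b^2$ plus the trace $-s\lambda\int_0^T[a\varphi_x w^2/b]_{x=0}^{x=1}dt$ — exactly the paper's computation of $I_4$. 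Only note that your intra-block bookkeeping is slightly imprecise (the $aa'\varphi_x(w_x)^2$ contribution actually arises from the pairing of $(aw_x)_x$ with $-s(a\varphi_x)_x w$, whose non--boundary remainder you attributed elsewhere, and the time trace $\frac{s^2}{2}\int_0^1[a(\varphi_x)^2w^2]_{t=0}^{t=T}dx$ enters with a plus sign), but the block totals, and hence the final identity \eqref{D&BT}, are unaffected.
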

\begin{proof} Computing $\langle L^+_sw,L^-_sw\rangle$,
one has that
\[
\langle L^+_sw,L^-_sw\rangle = I_1+ I_2 + I_3+I_4,
\]
where
\[
\begin{aligned}
I_1& := \int_{Q_T}\big((aw_x)_x
 - s \varphi_t w + s^2a (\varphi_x)^2 w\big) w_t  dxdt,\\
I_2& := \int_{Q_T}\big((aw_x)_x
 - s \varphi_t w + s^2a (\varphi_x)^2 w\big) (-2sa\varphi_xw_x)  dxdt,\\
I_3& := \int_{Q_T}\big((aw_x)_x
 - s \varphi_t w + s^2a (\varphi_x)^2 w\big)(-s(a\varphi_x)_xw) dxdt,
\end{aligned}
\]
and
\[
I_4:= \lambda \int_{Q_T} \frac{w}{b}\big( w_t -2sa\varphi_x
w_x -
 s(a\varphi_x)_xw\big)dxdt.
\]
 By several integrations by parts in space and in time (see
\cite[Lemma 3.4]{acf}, \cite[Lemma 3.1]{fm} or \cite[Lemma 3.1]{fm1}), and observing that $  \int_{Q_T}a (a \varphi_x)_{xx} w w_xdxdt=0$ (by the very definition of $\varphi$), we get
\begin{equation}\label{D&BT_prima}
\begin{aligned}
&I_1+I_2+I_3 \;\\&=\; \frac{s}{2} \int_{Q_T} \varphi_{tt}
w^2dxdt 
- 2s^2 \int_{Q_T}a \varphi_x \varphi_{tx}w^2dxdt\\&  +s
\int_{Q_T}(2 a^2\varphi_{xx} + aa'\varphi_x)(w_x)^2 dxdt
\\&+ s^3 \int_{Q_T}(2a \varphi_{xx} + a'\varphi_x)a
(\varphi_x)^2 w^2dxdt
\\
& + \int_0^T[aw_xw_t]_{x=0}^{x=1} dt- \frac{s}{2}
\int_0^1[w^2\varphi_t]_{t=0}^{t=T}dx+ \frac{s^2}{2}\int_0^1
[a(\varphi_x)^2 w^2]_{t=0}^{t=T}dt\\& + \int_0^T[-s\varphi_x (aw_x)^2
+s^2a\varphi_t \varphi_x w^2 - s^3 a^2(\varphi_x)^3w^2
]_{x=0}^{x=1}dt
\\&+ \int_0^T[-sa(a\varphi_x)_xw
w_x]_{x=0}^{x=1}dt-\frac{1}{2} \int_0^1 \Big[a(w_x)^2\Big]_{t=0}^{t=T}dx.
\end{aligned}
\end{equation}
Next, we compute $I_4$:
\begin{equation}\label{T_4}
\begin{aligned}
I_4& = \lambda \left(\int_{Q_T} \frac{1}{2b}(w^2)_t dxdt - 2
s \int_{Q_T} \frac{a}{b} \varphi_xw_xwdxdt \right.\\
& \left.-s \int_{Q_T}
\frac{(a\varphi_x)_x}{b} w^2 dxdt\right)\\
&= \lambda \left( \int_0^1 \frac{1}{2b}[w^2]_{t=0}^{t=T} dx -  s
\int_{Q_T} \frac{a}{b} \varphi_x (w^2)_x dxdt -s
\int_{Q_T} \frac{(a\varphi_x)_x}{b} w^2 dxdt\right)\\
&= \lambda \left( \int_0^1 \frac{1}{2b}[w^2]_{t=0}^{t=T} dx -  s
\int_0^T\left[\frac{a}{b} \varphi_x w^2\right]_{x=0}^{x=1}dt\right.\\
& \left. + s \int_{Q_T} \left(\frac{a \varphi_x}{b}\right)_x
w^2 dxdt-s
\int_{Q_T} \frac{(a\varphi_x)_x}{b} w^2 dxdt\right)\\
&=\lambda \left( \int_0^1 \frac{1}{2b}[w^2]_{t=0}^{t=T} dx -  s
\int_0^T\left[\frac{a \varphi_x}{b} w^2\right]_{x=0}^{x=1}dt -s
\int_{Q_T} \frac{a\varphi_xb'}{b^2} w^2 dxdt\right).
\end{aligned}
\end{equation}
Adding \eqref{D&BT_prima}-\eqref{T_4}, \eqref{D&BT} follows
immediately.
\end{proof}

For the boundary terms in \eqref{D&BT}, we have:
\begin{Lemma}\label{lemma4}
The boundary terms in \eqref{D&BT} reduce to
\[-s \int_0^{T} \left[ \Theta
     (aw_{x})^2 \psi' \right]_{x=0}^{x=1}dt.
\]
\end{Lemma}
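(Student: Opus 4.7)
The plan is to eliminate, one by one, all the boundary contributions in \eqref{D&BT} by using the boundary data on $w$, and verify that only the one term claimed survives.

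First I would record the boundary behavior of $w = e^{s\varphi}v$. By hypothesis $v\in\mathcal{V}$, and since $\varphi = \Theta\psi$ with $\Theta(t)\to +\infty$ as $t\to 0^+, T^-$ and $\psi<0$, the factor $e^{s\varphi}$ vanishes together with all its $t$- and $x$-derivatives polynomial in $\Theta$ at $t=0$ and $t=T$. Hence $w(0,x)=w(T,x)=0$, and moreover $w_x(0,x)=w_x(T,x)=0$. The spatial Dirichlet conditions $v(t,0)=v(t,1)=0$ give $w(t,0)=w(t,1)=0$, and differentiating in $t$ along these curves also yields $w_t(t,0)=w_t(t,1)=0$.

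Next I would sweep the terms in \{B.T.\}. The three integrals in $x$ at $t=0,T$ — namely $-\tfrac{s}{2}\int_0^1[w^2\varphi_t]_{t=0}^{t=T}dx$, $\tfrac{s^2}{2}\int_0^1[a(\varphi_x)^2w^2]_{t=0}^{t=T}dx$, and $-\tfrac12\int_0^1[a(w_x)^2-\tfrac{\lambda}{2b}w^2]_{t=0}^{t=T}dx$ — vanish, since $w$ and $w_x$ are both zero at $t=0,T$. For the integrals in $t$ at $x=0,1$, the ones containing a factor $w$ without $w_x$ alone (i.e.\ $aw_xw_t$, $s^2a\varphi_t\varphi_xw^2$, $-s^3a^2(\varphi_x)^3w^2$, $-s\lambda\frac{a\varphi_x}{b}w^2$, and $-sa(a\varphi_x)_xww_x$) all vanish because $w(t,0)=w(t,1)=0$, using additionally $w_t(t,0)=w_t(t,1)=0$ for the $aw_xw_t$ term.

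The only surviving contribution is therefore
\[
\int_0^T\bigl[-s\varphi_x(aw_x)^2\bigr]_{x=0}^{x=1}\,dt.
\]
Substituting $\varphi_x=\Theta\,\psi'$ gives exactly the claimed expression. Since all boundary simplifications rest on plain Dirichlet data and on the decay of the weight at the time endpoints, there is no serious obstacle here; the only mildly delicate point is to justify, from the regularity in $\mathcal{V}$ and the form of $\varphi$, that $w_x$ and $w_t$ vanish at the appropriate boundaries (an approximation argument through smoother data with $v_T\in D(A)$, in the spirit of Theorem \ref{theorem1}, suffices and then one passes to the limit).
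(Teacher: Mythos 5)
Your proof is correct and takes essentially the same route as the paper: every boundary term is killed by the Dirichlet data $w(t,0)=w(t,1)=0$ (hence $w_t=0$ there) together with the vanishing of $w$ and $\sqrt{a}\,w_x$ at $t=0,T$ forced by the weight $e^{s\varphi}$, leaving only $-s\int_0^T\big[\Theta (aw_x)^2\psi'\big]_{x=0}^{x=1}dt$. The paper merely adds the explicit remark (which you use implicitly) that the new singular coefficient $\Theta\frac{a\psi'}{b}=\Theta c_1\frac{x-x_0}{b}$ is bounded at $x=0,1$ because $b$ degenerates only at the interior point $x_0$, so its product with $w^2$ indeed vanishes, and it delegates the remaining nonsingular terms to the computations of the earlier works it cites.
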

\begin{proof}
As in \cite{fm} or \cite{fm1}, using the definition of $\varphi$ and the boundary
conditions on $w$, one has that
\begin{equation}\label{bd1}
\begin{aligned}
& \int_0^T[aw_xw_t]_{x=0}^{x=1} dt- \frac{s}{2}
\int_0^1[w^2\varphi_t]_{t=0}^{t=T}dx+ \frac{s^2}{2}\int_0^1
[a(\varphi_x)^2 w^2]_{t=0}^{t=T}dt\\& + \int_0^T[-s\varphi_x (aw_x)^2
+s^2a\varphi_t \varphi_x w^2 - s^3 a^2(\varphi_x)^3w^2
]_{x=0}^{x=1}dt
\\&+ \int_0^T[-sa(a\varphi_x)_xw
w_x]_{x=0}^{x=1}dt-\frac{1}{2} \int_0^1 \Big[a(w_x)^2\Big]_{t=0}^{t=T}dx= -s
\int_0^{T} \left[ \Theta
     (aw_{x})^2 \psi' \right]_{x=0}^{x=1}dt.
\end{aligned}
\end{equation}
Moreover, since $w \in \mathcal{V}$, $ w\in C\big([0, T];{\cal H}
\big)$; thus $w(0, x)$, $w(T,x)$ are well defined, and using
the boundary conditions of $w$, we get that
\[
\int_0^1\left[\frac{1}{2b}w^2\right]_{t=0}^{t=T} dx=0.
\]

Now, consider the last boundary term $\displaystyle s \lambda
\int_0^T \left[\frac{a\varphi_x}{b}w^2\right]_{x=0}^{x=1}dt$. Using
the definition of $\varphi$, this term becomes $\displaystyle s
\lambda \int_0^T
\left[\Theta\frac{a\psi'}{b}w^2\right]_{x=0}^{x=1}dt$. By
definition of $\psi$, the
function $\displaystyle \Theta \frac{a\psi'}{b}w^2$ is bounded  in $(0,T)$.
Thus, by the boundary conditions on $w$, one has
\[
s \lambda \int_0^T
\left[\Theta\frac{a\psi'}{b}w^2\right]_{x=0}^{x=1}dt=0.
\]
\end{proof}

Now, the crucial step is to prove the following estimate:
\begin{Lemma}\label{lemma2}
Assume Hypothesis $\ref{Ass02}$. Then there exist two positive constants
$s_0$ and $C$ such that for
all $s \ge s_{0}$ the distributed terms of \eqref{D&BT} satisfy the
estimate
\[
\begin{aligned}
&\frac{s}{2}\int_{Q_T} \varphi_{tt} w^2 dxdt- 2s^2 \int_{Q_T}a
\varphi_x \varphi_{tx}w^2 dxdt\\& + s \int_{Q_T}(2
a^2\varphi_{xx} + aa' \varphi_x)(w_x)^2 dxdt \\&+ s^3 \int_0^T
\int_0^1(2a \varphi_{xx}+ a' \varphi_x)a (\varphi_x)^2 w^2
dxdt-s\lambda \int_{Q_T} \frac{a\varphi_xb'}{b^2} w^2
dxdt\\&\ge \frac{C}{2}s\int_{Q_T} \Theta a(w_x)^2 dxdt +
\frac{C^3}{2}s^3 \int_{Q_T}\Theta^3 \frac{(x-x_0)^2}{a} w^2
dxdt.
\end{aligned}
\]
\end{Lemma}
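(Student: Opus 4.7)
The proof proceeds by computing the derivatives of $\varphi = \Theta(t)\psi(x)$ explicitly and then estimating each of the five integrals on the left-hand side.

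\textbf{Step 1 (Derivatives of $\varphi$).} From the definition of $\psi$, one gets
\[
\psi'(x)=c_1\frac{x-x_0}{a(x)},\qquad \psi''(x)=c_1\frac{a(x)-(x-x_0)a'(x)}{a(x)^2},
\]
hence $\varphi_x=\Theta\psi'$, $\varphi_{xx}=\Theta\psi''$, $\varphi_t=\Theta'\psi$, $\varphi_{tx}=\Theta'\psi'$, $\varphi_{tt}=\Theta''\psi$.

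\textbf{Step 2 (Leading-order positive contributions).} Direct algebra yields the key identities
\[
2a^2\varphi_{xx}+a a'\varphi_x=c_1\Theta\bigl[2a-(x-x_0)a'\bigr],
\]
and
\[
(2a\varphi_{xx}+a'\varphi_x)\,a(\varphi_x)^2
=c_1^3\Theta^3\frac{(x-x_0)^2}{a^2}\bigl[2a-(x-x_0)a'\bigr].
\]
Applying $(x-x_0)a'\le K_1 a$ together with $K_1<2$, one bounds these from below by $c_1\Theta(2-K_1)a$ and $c_1^3\Theta^3(2-K_1)(x-x_0)^2/a$ respectively. This already produces the two terms on the right-hand side with constant $\tfrac{C}{2}=\tfrac{c_1(2-K_1)}{2}$ (possibly diminished after Step~4).

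\textbf{Step 3 (Time error terms).} Since $\Theta(t)=[t(T-t)]^{-4}$, elementary calculation gives $|\Theta'|\le C\Theta^{5/4}$ and $|\Theta''|\le C\Theta^{3/2}$. Combined with $|\psi|\le c_1c_2$, the first two integrals on the left are bounded in absolute value by
\[
Cs\int_{Q_T}\Theta^{3/2}w^2\,dx\,dt+Cs^2\int_{Q_T}\Theta^{9/4}\frac{(x-x_0)^2}{a}w^2\,dx\,dt.
\]
The second summand is dominated by $s^3\int_{Q_T}\Theta^3(x-x_0)^2/a\cdot w^2$ for $s\ge s_0$ (since $s\Theta^{3/4}\ge s\,\inf_t\Theta^{3/4}\to\infty$). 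For the first summand I will apply Young's inequality
\[
\Theta^{3/2}w^2\le \varepsilon\,\Theta^3\frac{(x-x_0)^2}{a}w^2+\frac{1}{4\varepsilon}\,\frac{a}{(x-x_0)^2}w^2,
\]
and then Proposition \ref{HP} to the weight $p=a$ (which satisfies \eqref{condp} with $q=K_1$ when $K_1>1$; for $K_1\le 1$ one uses instead the integrability of $1/a$ from Lemma~\ref{Lemma 2.1} together with the Poincaré inequality in $H^1_0(0,1)$) to absorb the second term into $s\int\Theta a(w_x)^2$. The first term is absorbed into the main $s^3$ piece upon taking $\varepsilon$ small and $s_0$ large.

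\textbf{Step 4 (Singular term).} Rewriting the last distributed integral as
\[
-s\lambda c_1\int_{Q_T}\Theta(x-x_0)\frac{b'}{b^2}w^2\,dx\,dt,
\]
the estimate $(x-x_0)b'\le K_2 b$ (together with \eqref{ipob} when $\lambda<0$, which removes the sign ambiguity) yields the pointwise bound $|(x-x_0)b'/b^2|\le K_2/b$. Hence the modulus is controlled by $s|\lambda|K_2 c_1\int_{Q_T}\Theta\,w^2/b$, and by inequality \eqref{1} of Lemma~\ref{L2'}/Proposition~\ref{PropH} this is in turn $\le s|\lambda|K_2 C^* c_1\int_{Q_T}\Theta a(w_x)^2$. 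When $\lambda<0$ the sign is favourable and gives a further positive contribution; when $0<\lambda<1/C^*$ the product $\lambda C^*$ is $<1$, so by choosing $c_1$ large one retains a strictly positive portion of the leading $s\int\Theta a(w_x)^2$ term.

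\textbf{Main obstacle.} The delicate point is keeping all the absorption thresholds compatible across the sub-cases of Hypothesis~\ref{Ass02}: the regime $K_1>4/3$ (and especially $K_1>3/2$) forces one to invoke the extra monotonicity \eqref{dainfinito_1} and the growth bound \eqref{Sigma} on $|a'|$ in order that the integrals $\int aa'\varphi_x(w_x)^2$ and $\int a'\varphi_x a(\varphi_x)^2 w^2$ make sense near $x_0$ and so that Hardy--Poincaré can be applied with a weight that is not simply $a$. This is where the non-smooth nature of the coefficients is felt, and it is the technical core of the argument; the rest of the proof consists in absorbing uniformly in $s\ge s_0$ and in $c_1$ sufficiently large.
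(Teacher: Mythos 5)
There is a genuine gap, and it sits exactly where you locate the ``technical core'': your Step 3 does not work as written. After your Young inequality the leftover term is (up to constants) $s\int_{Q_T}\frac{a}{(x-x_0)^2}\,w^2\,dxdt$, and you propose to control it by $s\int_{Q_T}a(w_x)^2$ via Proposition \ref{HP} with the weight $p=a$. But the standing hypothesis $(x-x_0)a'\le K_1a$ is an \emph{upper} bound on $a'$: by Lemma \ref{Lemma 2.1} it yields the monotonicity of $|x-x_0|^{\gamma}/a$, which is the reciprocal of what \eqref{condp} requires for $p=a$; condition \eqref{dainfinito_1} would give the right monotonicity, but it is only assumed when $K_1>4/3$ and only with some $\theta\in(0,K_1]$, not necessarily $\theta>1$. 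Worse, for $K_1\le 1$ (the (WWD) and (WSD) cases) the inequality $\int_0^1\frac{a}{(x-x_0)^2}w^2\,dx\le C\int_0^1 a(w_x)^2\,dx$ is simply false on ${\cal H}$: if $K_2<1$ a function $w\in{\cal H}$ need not vanish at $x_0$, and then $\frac{a}{(x-x_0)^2}w^2\sim w(x_0)^2|x-x_0|^{K_1-2}$ is not even integrable, so your fallback (``integrability of $1/a$ plus Poincar\'e in $H^1_0$'') cannot produce it either. The correct treatment, which is the content of the result the paper invokes (\cite[Lemma 3.2]{fm}, \cite[Lemma 4.1]{fm1}), interpolates so that only a \emph{fractional} power of $\frac{a}{(x-x_0)^2}$ survives and then applies Proposition \ref{HP} with the weights $p=(a|x-x_0|^4)^{1/3}$ (for $K_1>4/3$, which is precisely why \eqref{dainfinito_1} is assumed there) or $p=|x-x_0|^{4/3}\max_{[0,1]}a^{1/3}$ (for $K_1\le 4/3$) --- the same weights that reappear in the proof of Lemma \ref{obser.regular}. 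The paper's own proof of Lemma \ref{lemma2} does not redo this part at all: it quotes the purely degenerate estimate from \cite{fm,fm1} and only handles the new term $-s\lambda\int_{Q_T}\frac{a\varphi_xb'}{b^2}w^2$.

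Your Step 4 also has a flaw in the absorption mechanism. The singular error is bounded by $s\lambda c_1K_2C^*\int_{Q_T}\Theta a(w_x)^2$, while the positive term you produced in Step 2 has coefficient of order $c_1(2-K_1)$; both scale \emph{linearly} in $c_1$, so ``choosing $c_1$ large'' does not improve the ratio, and $\lambda C^*<1$ alone does not give $\lambda K_2C^*$ smaller than the retained fraction of $2-K_1$. The paper absorbs this term by a different device: it remarks that the constant $C$ in the degenerate estimate of \cite{fm,fm1} may be taken as large as needed at the price of enlarging $s_0$, and then requires $\lambda c_1K_2C^*\le C/4$. Two smaller points: the pointwise bound $|(x-x_0)b'/b^2|\le K_2/b$ is not available (only the one-sided bound $(x-x_0)b'\le K_2b$ is assumed), though for $\lambda>0$ the one-sided bound is all you need; and for $\lambda<0$ you should say explicitly that \eqref{ipob} makes the whole term nonnegative, which is how the paper disposes of that case.
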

\begin{proof}
Proceeding as in \cite[Lemma 3.2]{fm} or in \cite[Lemma 4.1]{fm1}, one can prove
that, for $s $ large enough,
\[
\begin{aligned}
&\frac{s}{2} \int_{Q_T} \varphi_{tt} w^2dxdt - 2s^2 \int_{Q_T}a \varphi_x \varphi_{tx}w^2dxdt  \\&+s
\int_{Q_T}(2 a^2\varphi_{xx} + aa'\varphi_x)(w_x)^2 dxdt
\\&+ s^3 \int_{Q_T}(2a \varphi_{xx} + a'\varphi_x)a
(\varphi_x)^2 w^2dxdt \\
&\ge \frac{3C}{4}s\int_{Q_T} \Theta a(w_x)^2 dxdt +
\frac{C^3}{2}s^3 \int_{Q_T}\Theta^3 \frac{(x-x_0)^2}{a} w^2
dxdt,
\end{aligned}
\]
where $C$ is a positive constant. Let us remark that one can assume $C$ as large as desired, provided that $s_0$ increases as well. Indeed, taken $k>0$, from
\[
Cs{\cal A}_1+C^3s^3{\cal A}_2=kC\frac{s}{k}{\cal A}_1+k^3C^3\frac{s^3}{k^3}{\cal A}_2,
\]
we can choose $s_0'=ks_0$ and $C'=kC$ large as needed.

Now, we estimate the term $\displaystyle -s\lambda \int_{Q_T}
\frac{a\varphi_xb'}{b^2} w^2 dxdt$. If $\lambda <0$, the thesis follows immediately by the previous inequality and by \eqref{ipob}. Otherwise, if $\lambda >0$, by definition of
$\varphi$ and the assumption on $b$, one has
\[
\begin{aligned}
-s\lambda \int_{Q_T} \frac{a\varphi_xb'}{b^2} w^2 dxdt &=
-s\lambda \int_{Q_T} \Theta \frac{a\psi'b'}{b^2} w^2
dxdt\\&=-s \lambda c_1 \int_{Q_T} \Theta
\frac{(x-x_0)b'}{b^2}w^2 dxdt\\
&\ge -s\lambda c_1K_2 \int_{Q_T} \frac{\Theta}{b}w^2 dxdt.
\end{aligned}
\]
Since $w(t,\cdot)\in {\cal H}$ for every $t\in[0,1]$, for $w\in {\cal V}$, by \eqref{1} we get
\[
\int_{Q_T} \frac{\Theta}{b}w^2 dxdt \le
C^* \int_{Q_T} \Theta a(w_x)^2 dxdt.
\]
Hence,
\[
-s\lambda \int_{Q_T} \frac{a\varphi_xb'}{b^2} w^2 dxdt \ge
-s\lambda c_1K_2 C^* \int_{Q_T} \Theta
a(w_x)^2 dxdt,
\]
and we can assume, in view of what remarked above, that this last
quantity is greater than
\[
 - s\frac{C}{4}\int_{Q_T} \Theta a(w_x)^2 dxdt.
\]
Summing up, the distributed terms of $\int_{Q_T}L^+_s w L^-_s
w dxdt$ can be estimated as
\[
\{\text{D.T.} \}\ge \frac{C}{2}s\int_{Q_T} \Theta a(w_x)^2
dxdt + \frac{C^3}{2}s^3 \int_{Q_T}\Theta^3 \frac{(x-x_0)^2}{a}
w^2 dxdt,
\]
for $s$ large enough and $C >0$.
\end{proof}

From Lemma \ref{lemma1}, Lemma \ref{lemma4} and Lemma \ref{lemma2},
we deduce immediately that there exist two positive constants $C$
and $s_0$, such that for
all $s \ge s_0$,
\begin{equation}\label{D&BT1}
\begin{aligned}
\int_{Q_T}L^+_s w L^-_s w dxdt &\ge Cs\int_{Q_T} \Theta
a(w_x)^2 dxdt\\&+ Cs^3 \int_{Q_T}\Theta^3 \frac{(x-x_0)^2}{a}
w^2 dxdt- s\int_0^{T} \left[\Theta
a^2w_{x}^{2}\psi'\right]_{x=0}^{x=1}dt.
\end{aligned}
\end{equation}

Thus, a straightforward consequence of \eqref{stimetta} and of
\eqref{D&BT1} is the next result.
\begin{Lemma}
Assume Hypothesis $\ref{Ass02}$. Then, there exist two
positive constants $C$ and $s_0$, such that for all $s \ge s_0$, \begin{equation}\label{2stelle}
\begin{aligned}
   & s\int_{Q_T} \Theta a(w_x)^2dxdt  + s^3
\int_{Q_T}\Theta^3 \frac{(x-x_0)^2}{a} w^2 dxdt\\&\le
    C\left(\int_{Q_T} h^2 e^{2s\varphi(t,x)}dxdt+ s\int_0^{T} \left[\Theta
       a^2(w_{x})^{2}\psi'\right]_{x=0}^{x=1}dt.
    \right).
    \end{aligned}
    \end{equation}
\end{Lemma}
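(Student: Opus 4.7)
The plan is essentially a bookkeeping step: the target inequality follows by chaining the two previously established estimates. From \eqref{stimetta} I have the upper bound
\[
2\langle L^+_sw, L^-_sw\rangle \le \|he^{s\varphi}\|_{L^2(Q_T)}^2,
\]
while \eqref{D&BT1} (which consolidates Lemma \ref{lemma1}, Lemma \ref{lemma4} and Lemma \ref{lemma2}) provides the lower bound
\[
\int_{Q_T}L^+_sw\,L^-_sw\,dxdt \ge Cs\!\int_{Q_T}\!\Theta a(w_x)^2 dxdt + Cs^3\!\int_{Q_T}\!\Theta^3\frac{(x-x_0)^2}{a}w^2 dxdt - s\!\int_0^T\!\!\left[\Theta a^2 (w_x)^2 \psi'\right]_{x=0}^{x=1}\!dt,
\]
valid for all $s\ge s_0$ with constants $C,s_0>0$ coming from the previous lemmas.

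The proof then proceeds by multiplying the lower bound by $2$, combining it with the upper bound to get
\[
2Cs\int_{Q_T}\Theta a(w_x)^2 dxdt + 2Cs^3\int_{Q_T}\Theta^3\frac{(x-x_0)^2}{a}w^2 dxdt \le \|he^{s\varphi}\|_{L^2(Q_T)}^2 + 2s\int_0^T\left[\Theta a^2 (w_x)^2 \psi'\right]_{x=0}^{x=1}dt,
\]
and then dividing through by $2C$ and relabeling the resulting constant (which is permitted by the universal-constant convention stated in the introduction) to obtain exactly \eqref{2stelle}.

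There is no real obstacle here, since all the analytical content has already been absorbed into the upper bound \eqref{stimetta} (which came from the elementary inequality $2AB\le A^2+B^2$) and the lower bound \eqref{D&BT1} (which required the delicate estimates of Lemma \ref{lemma2}, in particular the use of \eqref{1} to absorb the singular term $-s\lambda\int a\varphi_x b'/b^2 w^2$ into a fraction of the distributed gradient term, as well as the boundary identification in Lemma \ref{lemma4}). The only thing to verify is that the remaining boundary contribution appearing on the right-hand side of \eqref{2stelle} matches the one produced by \eqref{D&BT1}, which it does by direct inspection.
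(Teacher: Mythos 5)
Your proposal is correct and coincides with the paper's own argument: the paper derives \eqref{2stelle} precisely as ``a straightforward consequence'' of \eqref{stimetta} and \eqref{D&BT1}, i.e.\ by bounding $2\langle L^+_sw,L^-_sw\rangle$ above by $\|he^{s\varphi}\|^2_{L^2(Q_T)}$ and below by twice the right-hand side of \eqref{D&BT1}, then absorbing the numerical factors into the universal constant $C$. The only (harmless) point worth noting is that after dividing by $2C$ the two terms on the right carry slightly different coefficients, which you may unify into a single constant since the $h$-term is nonnegative.
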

Recalling the definition of $w$, we have $v= e^{-s\varphi}w$ and
$v_{x}= -s\Theta \psi'e^{-s\varphi}w + e^{-s\varphi}w_{x}$. Thus,
substituting in \eqref{2stelle}, Theorem \ref{Cor1} follows.

\section{Observability results and application to null
controllability}\label{sec4}
In this section we shall apply the just established Carleman inequalities to observability and controllability issues. For this, we assume that the control set $\omega$ satisfies
the following assumption:
\begin{Assumptions} \label{ipotesiomega}
The subset $\omega$ is such that
\begin{enumerate}
\item[(i)] it is an interval which contains the degeneracy point:
\begin{equation}\label{omega1}
\omega=(\alpha,\beta) \subset (0,1) \mbox{ is such that $x_0 \in
\omega$},
\end{equation}
or
\item[(ii)] it is an interval lying on one
side of the degeneracy point:
\begin{equation}\label{omega}
\omega=(\alpha,\beta) \subset (0,1) \mbox{ is such that $x_0\not \in
\bar \omega$}.
\end{equation}
\end{enumerate}
\end{Assumptions}

On the coefficients $a$ and $b$ we essentially start with the assumptions made so far, with the exception of Hypothesis \ref{Ass01}, and we add another technical one. We summarize all of them in the following:
\begin{Assumptions}\label{Ass04}$\ $
\begin{itemize}
\item Assume one among Hypotheses \ref{Ass0}, \ref{Ass0_1} or \ref{Ass01_1} with $K_1+K_2\leq2$ and $\lambda<1/C^*$. 
\item If $\lambda<0$, \eqref{ipob} holds.
\item If $K_1>4/3$, condition \eqref{dainfinito_1} holds, and if $K_1>3/2$, \eqref{Sigma} is satisfied.
\item If Hypothesis $\ref{Ass0}$ or $\ref{Ass0_1}$ holds, there exist
two functions $\fg \in L^\infty_{\rm loc}([0,1]\setminus \{x_0\})$, $\fh \in W^{1,\infty}_{\rm loc}([0,1]\setminus \{x_0\})$ and
two strictly positive constants $\fg_0$, $\fh_0$ such that $\fg(x) \ge \fg_0$ for a.e. $x$ in $[0,1]$ and
\begin{equation}\label{5.3'}
-\frac{a'(x)}{2\sqrt{a(x)}}\left(\int_x^B\fg(t) dt + \fh_0 \right)+ \sqrt{a(x)}\fg(x) =\fh(x, B)
\end{equation}
for a.e.$x,B \in [0,1]$ with $x<B<x_0$ or $x_0<x<B$.
\end{itemize}
\end{Assumptions}

\begin{Remark}
Since we require identity \eqref{5.3'} far from $x_0$, once $a$ is given, it is easy to find $\fg,\fh,\fg_0$ and $\fh_0$ with the desired properties.  For example, if $a(x):= |x-x_0|^\alpha, \alpha \in (0,1)$,  
 we can take    $ \fg (x)\equiv \fg_0=\fh_0=1 $ and $\fh (x,B)= \ds |x-x_0|^{\frac{\alpha}{2}-1}\left[ \frac{\alpha}{2}\text{sign} (x-x_0) (B+1-x) + |x-x_0|\right]$, for all $x$ and $B \in [0,1]$, with $x<B<x_0$ or $x_0<x<B$.
 Clearly, $\fg \in L^\infty_{\rm loc}([0,1]\setminus \{x_0\})$ and $\fh \in W^{1,\infty}_{\rm loc}([0,1]\setminus \{x_0\}; L^\infty(0,1))$.
\end{Remark}

Now, we associate to problem \eqref{linear} the
homogeneous adjoint problem
\begin{equation}\label{h=0}
\begin{cases}
v_t +(av_x)_x +\displaystyle \frac{ \lambda}{b(x)}v= 0, &(t,x) \in
Q_T,
\\[5pt]
v(t,0)=v(t,1) =0, & t \in (0,T),
\\[5pt]
v(T,x)= v_T(x),
\end{cases}
\end{equation}
where $T>0$ is given and $v_T(x) \in L^2(0,1)$. By the Carleman estimate in Theorem
\ref{Cor1}, we will deduce the following observability inequality
for all the degenerate cases:
\begin{Proposition}
\label{obser.} Assume Hypotheses $\ref{ipotesiomega}$ and $\ref{Ass04}$. Then
there exists a positive constant $C_T$ such that every solution $v
\in  C([0, T]; L^2(0,1)) \cap L^2 (0,T; {\cal H})$ of
\eqref{h=0} satisfies
 \begin{equation}\label{obser1.}
\int_0^1v^2(0,x) dx \le C_T\int_0^T \int_\omega v^2(t,x)dxdt.
\end{equation}
\end{Proposition}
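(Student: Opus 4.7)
My plan is to prove \eqref{obser1.} by combining a dissipation estimate for the adjoint problem with the degenerate/singular Carleman estimate of Theorem \ref{Cor1}, a classical (non--degenerate, non--singular) Carleman estimate (Proposition \ref{classical Carleman} announced in the introduction), and a Caccioppoli--type inequality (Proposition \ref{caccio}), all glued together through cut--off functions. This is the standard ``three--ingredient'' route for observability of degenerate/singular parabolic equations, adapted here to the interior--degeneracy and non--smooth--coefficient setting.

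First I would prove a dissipation estimate: testing \eqref{h=0} against $v$ and using Lemma \ref{green} together with the coercivity from Proposition \ref{eq} (recall that $A$ is non--positive and self--adjoint by Theorem \ref{theorem1}), the map $t\mapsto \int_0^1 v^2(t,\cdot)\,dx$ is non--increasing. This yields
\[
\int_0^1 v^2(0,x)\,dx \;\le\; \frac{4}{T}\int_{T/4}^{3T/4}\int_0^1 v^2(t,x)\,dx\,dt,
\]
so the task reduces to bounding the right--hand side by $C_T\int_0^T\int_\omega v^2\,dx\,dt$.

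Second, on $[T/4,3T/4]$ the weight $\Theta$ is bounded above and below by positive constants and $\vp$ is bounded; hence, applying Theorem \ref{Cor1} to $v$ with $h=0$, the left--hand side controls $\int_{T/4}^{3T/4}\int_0^1\frac{(x-x_0)^2}{a}v^2\,dx\,dt$ up to a multiplicative constant. To upgrade the weight $(x-x_0)^2/a$ to $1$, I would split $[0,1]=U\cup ([0,1]\setminus U)$ for a small neighbourhood $U$ of $x_0$: off $U$, $(x-x_0)^2/a$ is bounded below by a positive constant; on $U$, I would invoke the weighted Hardy--Poincar\'e inequality \eqref{1} (valid in ${\cal H}$ by Proposition \ref{PropH}) coupled with the $s\Theta a(v_x)^2$ term on the left--hand side of the Carleman inequality.

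Third comes the localization, which depends on Hypothesis \ref{ipotesiomega}. In case (i) with $x_0\in\omega$, I would fix $\omega'=(\alpha',\beta')\subset\subset\omega$ with $x_0\in\omega'$ and apply Proposition \ref{classical Carleman} on the two regular subintervals lying outside a cut--off supported near $x_0$; on these subintervals the coefficients $a$ and $1/b$ are smooth and bounded, so the classical estimate controls the global $L^2$--integral in terms of integrals on $\omega'\setminus\{x_0\}$, which are converted into $\int_0^T\int_\omega v^2\,dx\,dt$ via Proposition \ref{caccio}. In case (ii) with $x_0\notin\bar\omega$, I would apply Theorem \ref{Cor1} on $[0,1]$ to handle the region containing $x_0$, and combine it with Proposition \ref{classical Carleman} applied on a subinterval lying on the same side of $x_0$ as $\omega$; a cut--off function equal to $1$ in a neighbourhood of $x_0$ and vanishing beyond $\omega$ allows the boundary terms of Theorem \ref{Cor1} to be traded for interior terms on $\omega$, which are again absorbed by Caccioppoli.

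The main obstacle is the treatment of the boundary contributions $[a\Theta e^{2s\vp}(x-x_0)(v_x)^2]_{x=0}^{x=1}$ appearing on the right--hand side of Theorem \ref{Cor1}: since the Dirichlet condition does not force $v_x$ to vanish at $x=0,1$, these are genuine non--trivial terms and must be absorbed via the classical Carleman estimate on a suitable non--degenerate subinterval combined with Caccioppoli. It is precisely in order to apply Proposition \ref{classical Carleman} with non--smooth $a$ that Hypothesis \ref{Ass04}, and in particular the auxiliary identity \eqref{5.3'} for $\mathfrak{g}$ and $\mathfrak{h}$, is needed.
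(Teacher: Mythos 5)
Your plan is essentially sound, and its core --- the localized Carleman estimate obtained by gluing Theorem \ref{Cor1}, Proposition \ref{classical Carleman} and the Caccioppoli inequality via cut--off functions (your third step) --- is exactly the content of the paper's Lemma \ref{lemma3}. Where you genuinely diverge is in how that estimate is converted into observability. You multiply the equation by $v$ and reduce to bounding $\int_{T/4}^{3T/4}\int_0^1 v^2\,dx\,dt$, which you then control on the strip through the $s\Theta a(v_x)^2$ term plus the Hardy--Poincar\'e inequality \eqref{1} (in fact no splitting into $U$ and its complement is needed: $\int_0^1 v^2\,dx\le \max b\int_0^1 v^2/b\,dx\le C\int_0^1 a(v_x)^2\,dx$ directly, cf.\ Remark \ref{rem1}). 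The paper instead multiplies by $v_t$, shows that $t\mapsto\int_0^1 a(v_x)^2\,dx-\lambda\int_0^1 v^2/b\,dx$ is non--decreasing, bounds this energy at $t=0$ by the localized Carleman estimate over $[T/4,3T/4]$, and finally converts $\int_0^1 a(v_x)^2(0,x)\,dx$ into $\int_0^1 v^2(0,x)\,dx$ via the weighted Hardy--Poincar\'e inequality of Proposition \ref{HP} with $p=(a|x-x_0|^4)^{1/3}$ (or $|x-x_0|^{4/3}\max_{[0,1]} a^{1/3}$), using that $(a/(x-x_0)^2)^{1/3}$ is bounded away from zero. Your route is a legitimate alternative which shortcuts that last step and treats $\lambda>0$ and $\lambda<0$ uniformly through Proposition \ref{eq}, whereas the paper distinguishes the two signs and uses $\lambda<1/C^*$ to absorb the singular term at $t=0$.

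Two points need repair. First, the monotonicity you invoke is stated backwards: for the backward adjoint equation, multiplying by $v$ and using Lemma \ref{green} and Proposition \ref{eq} gives
\[
\frac12\frac{d}{dt}\int_0^1 v^2\,dx=\int_0^1 a(v_x)^2\,dx-\lambda\int_0^1 \frac{v^2}{b}\,dx\ \ge\ 0,
\]
so $t\mapsto\int_0^1 v^2(t,\cdot)\,dx$ is non--\emph{de}creasing; from your claim (``non--increasing'') the displayed reduction $\int_0^1 v^2(0,x)\,dx\le \frac{4}{T}\int_{T/4}^{3T/4}\int_0^1 v^2\,dx\,dt$ would not follow, while it does follow from the correct monotonicity, so the slip is fixable but must be fixed. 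Second, Theorem \ref{Cor1} and the localized Carleman estimate are proved for solutions in $\mathcal{V}$ (in practice for $v\in\mathcal{W}$, i.e.\ $v_T\in D(A^2)$), whereas the proposition concerns every $v\in C([0,T];L^2(0,1))\cap L^2(0,T;{\cal H})$; you must therefore first establish \eqref{obser1.} for such regular solutions and then conclude by density of $D(A^2)$ and continuous dependence, as the paper does in its final density argument.
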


Using the observability inequality \eqref{obser1.} and a standard
technique (e.g., see \cite[Section 7.4]{LRL}), one can prove the
null controllability result for the linear degenerate problem
\eqref{linear}:
\begin{Theorem}\label{th3}
Assume Hypotheses $\ref{ipotesiomega}$ and $\ref{Ass04}$. Then, given $u_0 \in L^2
(0,1)$, there exists $h \in L^2(Q_T)$ such that the solution $u$ of
\eqref{linear} satisfies
\begin{equation*}
u(T,x)= 0 \ \text{ for every  } \  x \in [0, 1].
\end{equation*}
Moreover
\[
\int_{Q_T} h^2 dxdt \le C \int_0^1 u_0^2(x) dx,
\]
for some positive constant $C$.
\end{Theorem}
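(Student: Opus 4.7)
The plan is to deduce Theorem \ref{th3} from the observability inequality \eqref{obser1.} by a penalization/HUM argument, in the spirit of \cite{LRL}. For each $\ve>0$, I would introduce the strictly convex, continuous and coercive functional
\[
J_\ve(h):=\frac{1}{2}\int_0^T\!\!\int_\omega h^2\,dxdt+\frac{1}{2\ve}\int_0^1 u^h(T,x)^2\,dx
\]
on $L^2((0,T)\times\omega)$, where $u^h$ is the unique weak solution of \eqref{linear} associated to $h$ (well posed by Theorem \ref{theorem1} and linearly depending on $h$ in $C([0,T];L^2(0,1))$). The direct method yields a unique minimizer $h_\ve$.

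Writing the Euler--Lagrange equation for $J_\ve$ and using duality, I would show $h_\ve=-v_\ve\big|_{(0,T)\times\omega}$, where $v_\ve\in C([0,T];L^2(0,1))\cap L^2(0,T;\mathcal H)$ solves the adjoint problem \eqref{h=0} with terminal datum $v_\ve(T,\cdot)=\frac{1}{\ve}u^{h_\ve}(T,\cdot)$. Multiplying the equation for $u^{h_\ve}$ by $v_\ve$ and integrating by parts on $Q_T$ via Lemma \ref{green} (which is legitimate in the present degenerate/singular setting), I would obtain the identity
\[
\frac{1}{\ve}\int_0^1 u^{h_\ve}(T,x)^2\,dx+\int_0^T\!\!\int_\omega h_\ve^2\,dxdt=-\int_0^1 u_0(x)\,v_\ve(0,x)\,dx.
\]

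At this point I would invoke the observability inequality from Proposition \ref{obser.}, which (combined with $h_\ve=-v_\ve|_\omega$) gives
\[
\int_0^1 v_\ve(0,x)^2\,dx\le C_T\int_0^T\!\!\int_\omega v_\ve^2\,dxdt=C_T\|h_\ve\|^2_{L^2((0,T)\times\omega)}.
\]
Applying Cauchy--Schwarz to the right-hand side of the identity above, I would then deduce the two uniform bounds
\[
\|h_\ve\|^2_{L^2((0,T)\times\omega)}\le C\|u_0\|^2_{L^2(0,1)},\qquad \frac{1}{\ve}\|u^{h_\ve}(T,\cdot)\|^2_{L^2(0,1)}\le C\|u_0\|^2_{L^2(0,1)}.
\]

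Finally, I would extract a subsequence $h_\ve\rightharpoonup h$ weakly in $L^2((0,T)\times\omega)$, extend $h$ by zero outside $\omega$, and use linear continuity of the solution map $h\mapsto u^h$ from $L^2(Q_T)$ to $C([0,T];L^2(0,1))$ to conclude that $u^{h_\ve}(T,\cdot)\to u^h(T,\cdot)$ in $L^2(0,1)$. The second bound forces $u^h(T,\cdot)=0$, while the first gives the announced control estimate by weak lower semicontinuity of the norm. The main obstacle is not in the HUM machinery itself (which is essentially automatic once observability holds), but in ensuring that every integration by parts and every duality identity above is justified in the weighted space $\mathcal H$ under the delicate coupling between the degeneracy of $a$ and the singularity $\lambda/b$; this is precisely what Lemma \ref{green}, Remark \ref{aul2} and the well-posedness result Theorem \ref{theorem1} were set up to guarantee, so the argument remains formally standard.
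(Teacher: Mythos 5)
Your argument is correct and coincides with the paper's own route: the paper proves Theorem \ref{th3} by exactly this standard passage from the observability inequality of Proposition \ref{obser.} to null controllability, simply citing the classical penalization/HUM technique of \cite[Section 7.4]{LRL} rather than writing it out, which is what you have done in detail (up to an immaterial sign convention in the duality identity).
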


\subsection{Proof of Proposition \ref{obser.}} In
this subsection we will prove, as a consequence of the Carleman
estimate proved in Section \ref{sec3}, the observability inequality
\eqref{obser1.}. For this purpose, we will give some preliminary
results. As a first step, consider the adjoint problem
\begin{equation}\label{h=01}
\begin{cases}
v_t +Av= 0, &(t,x) \in Q_T,
\\[5pt]
v(t,0)=v(t,1) =0, & t \in (0,T),
\\[5pt]
v(T,x)= v_T(x) \,\in D({A}^2),
\end{cases}
\end{equation}
where
\[
D({A}^2) = \Big\{u \,\in \,D({ A })\,:\, A u \,\in \,D( A )
\;\Big\}
\]
and ${ A }u:=(au_x)_x+ \lambda \displaystyle\frac{u}{b}$. Observe
that $D({ A }^2)$ is densely defined in $D({ A })$ for the graph norm (see, for
example, \cite[Lemma 7.2]{b}) and hence in $L^2(0,1)$. As in
\cite{cfr}, \cite{cfr1}, \cite{f} or \cite{fm}, define the following
class of functions:
\[
\cal{W}:=\Big\{ v\text{ is a solution of \eqref{h=01}}\Big\}. \]
Obviously (see, for example, \cite[Theorem 7.5]{b})
\[ \cal{W}\subset
C^1\big([0,T]\:;\:H^2_{a,b}(0,1)\big) \subset \mathcal{V} \subset
\cal{U},
\]
where, $\mathcal{V}$ is defined in \eqref{v} and
\begin{equation}\label{U}
\cal{U}:= C([0,T]; L^2(0,1)) \cap L^2(0, T; {\cal H}).
\end{equation}

We start with
\begin{Proposition}[Caccioppoli's inequality]\label{caccio}Assume Hypothesis $\ref{Ass03}$.
Let $\omega'$ and $\omega$ be two open subintervals of $(0,1)$ such
that $\omega'\subset \subset \omega \subset  (0,1)$ and $x_0 \not
\in \bar\omega'$. Let $\varphi(t,x)=\Theta(t)\Upsilon(x)$, where
$\Theta$ is defined in \eqref{c_1} and
\[
\Upsilon \in C([0,1],(-\infty,0))\cap
C^1([0,1]\setminus\{x_0\},(-\infty,0))
\]
is such that
\begin{equation}\label{stimayx}
|\Upsilon_x|\leq \frac{c}{\sqrt{a}} \mbox{ in }[0,1]\setminus\{x_0\}
\end{equation}
for some $c>0$. Then, there exist two positive constants $C$ and
$s_0$ such that every solution $v \in \cal W$ of the adjoint problem
\eqref{h=01} satisfies
\begin{equation}\label{lemme-caccio}
   \int_{0}^T \int _{\omega'}   (v_x)^2e^{2s\varphi } dxdt
    \ \leq \ C \int_{0}^T \int _{\omega}   v^2  dxdt,
\end{equation}
for all $s\geq s_0$.
\end{Proposition}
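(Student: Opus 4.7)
The plan is to run the standard parabolic Caccioppoli trick, adapting it to our degenerate/singular setting by localizing away from $x_0$. Using the hypothesis $x_0\notin \overline{\omega'}$, I first pick an auxiliary interval $\omega''$ with $\omega'\subset\subset\omega''\subset\subset\omega$ and $x_0\notin \overline{\omega''}$, together with a cut--off $\xi\in C^\infty_c((0,1))$ such that $0\le \xi\le 1$, $\xi\equiv 1$ on $\omega'$, and $\supp\xi\subset\omega''$. On $\supp\xi$ both $a$ and $b$ are continuous and strictly positive, so $a$ is bounded above while $1/b$ is bounded: this is exactly what lets us sidestep the interior degeneracy/singularity at $x_0$.

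Next, I would set $D(t):=\int_0^1\xi^2 v^2 e^{2s\varphi}\,dx$. Since $\Upsilon<0$ on $[0,1]$ and $\Theta(t)\to +\infty$ as $t\to 0^+,T^-$, we have $e^{2s\varphi(t,x)}\to 0$ uniformly in $x$ at the time endpoints, so $D(0)=D(T)=0$. Computing $\int_0^T D'(t)\,dt=0$, substituting $v_t=-(av_x)_x-\lambda v/b$ from \eqref{h=01}, and integrating by parts in $x$ (no boundary contribution because $\supp\xi\subset(0,1)$), we obtain
\[
\begin{aligned}
2\int_0^T\!\!\int_0^1\xi^2 a(v_x)^2 e^{2s\varphi}\,dxdt=\,&-2s\int_0^T\!\!\int_0^1\xi^2 v^2\varphi_t e^{2s\varphi}\,dxdt+2\lambda\int_0^T\!\!\int_0^1\xi^2\tfrac{v^2}{b}e^{2s\varphi}\,dxdt\\
&-4\int_0^T\!\!\int_0^1\xi\xi_x\, avv_x e^{2s\varphi}\,dxdt-4s\int_0^T\!\!\int_0^1\xi^2 avv_x\varphi_x e^{2s\varphi}\,dxdt.
\end{aligned}
\]
Young's inequality applied to the two cross terms absorbs a fraction of $\int\xi^2 a(v_x)^2 e^{2s\varphi}$ into the left--hand side and yields
\[
\int_0^T\!\!\int_0^1\xi^2 a(v_x)^2 e^{2s\varphi}\,dxdt\le C\int_0^T\!\!\int_0^1\xi^2 v^2\Big(s|\varphi_t|+s^2a\varphi_x^2+\tfrac{|\lambda|}{b}+a\xi_x^2\Big)e^{2s\varphi}\,dxdt.
\]

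Finally, I would verify that on $\supp\xi$ the coefficient in parentheses, multiplied by $e^{2s\varphi}$, is uniformly bounded for $s\ge s_0$. This is where the structure of the weight is used: assumption \eqref{stimayx} gives $a\varphi_x^2\le c^2\Theta^2$, and direct differentiation of \eqref{c_1} yields $|\varphi_t|\lesssim\Theta^{5/4}$; moreover, since $\Upsilon\le -\delta<0$ on $[0,1]$, the elementary bound $s^k\Theta^m e^{-2s\Theta\delta}\le C_{k,m,\delta}$ for $s\ge 1$ takes care of both $s|\varphi_t|e^{2s\varphi}$ and $s^2 a\varphi_x^2 e^{2s\varphi}$. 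The remaining terms $|\lambda|/b$ and $a\xi_x^2$ are bounded on $\supp\xi$, and $e^{2s\varphi}\le 1$ since $\varphi<0$. Hence the right--hand side is controlled by $C\int_0^T\int_\omega v^2\,dxdt$. Since $a\ge a_{\min}>0$ on $\omega'$ (because $x_0\notin\overline{\omega'}$) and $\xi\equiv 1$ there, dividing by $a_{\min}$ gives \eqref{lemme-caccio}. The only delicate step is the uniform bound on the Carleman--type weights, but it is a routine by--product of the fact that $\Upsilon$ stays strictly negative on all of $[0,1]$.
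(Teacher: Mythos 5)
Your argument is correct, and it follows the same general mechanism as the paper (cut--off, differentiate $t\mapsto\int_0^1\xi^2e^{2s\varphi}v^2dx$, insert the equation, integrate by parts, Young's inequality, then uniform bounds of the type $s^k\Theta^m e^{-2s\delta\Theta}\le C$ using $\Upsilon\le-\delta<0$ and \eqref{stimayx}). The genuine difference is in how the singular term $\lambda\int\!\!\int\xi^2e^{2s\varphi}v^2/b$ is handled. The paper takes $\supp\xi\subset\omega$, and since $\omega$ is allowed to contain $x_0$ (case (i) of Hypothesis \ref{ipotesiomega}), the factor $1/b$ is not bounded on $\supp\xi$; the paper therefore splits according to the sign of $\lambda$, drops the term when $\lambda<0$, and for $\lambda>0$ applies the Hardy--Poincar\'e inequality \eqref{1} to $w=\xi e^{s\varphi}v$ followed by an $\ve$--absorption with the restriction $\ve<(1+\lambda)^{-1}$. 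You instead interpose $\omega''$ with $\omega'\subset\subset\omega''\subset\subset\omega$ and $x_0\notin\bar\omega''$ (possible because $\mathrm{dist}(\bar\omega',x_0)>0$) and support $\xi$ in $\omega''$, so that $a$, $1/b$ and $a\Upsilon_x^2$ are all bounded there and the singular term is controlled trivially, with no case distinction on $\lambda$ and no use of \eqref{1}. Since the right--hand side of \eqref{lemme-caccio} is the unweighted integral of $v^2$ over $\omega$, nothing is lost by shrinking the support, so your route proves exactly the stated inequality and is more elementary; the paper's choice keeps the cut--off adapted to the whole control region, which is closer in spirit to how the estimate is then combined with the cut--off machinery in Lemma \ref{lemma3}, but is not needed for the statement itself. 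One cosmetic slip: after Young's inequality the term coming from $4\int\!\!\int\xi\xi_x\,avv_xe^{2s\varphi}$ should read $C\int\!\!\int a\,\xi_x^2v^2e^{2s\varphi}$, not $C\int\!\!\int \xi^2a\,\xi_x^2v^2e^{2s\varphi}$; this does not affect the conclusion, since that term is anyway supported in $\omega''$ where $a$ and $\xi_x$ are bounded.
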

Of course, our prototype for $\Upsilon$ is the function $\psi$
defined in \eqref{c_1}, since
\[
|\psi'(x)|= c_1 \sqrt{\frac{|x-x_0|^2}{a(x)}}\frac{1}{\sqrt{a(x)}}\leq c \frac{1}{\sqrt{a(x)}}
\]
by Lemma \ref{Lemma 2.1}.

\begin{proof} The proof follows the one of \cite[Proposition
4.2]{fm}, but it is different for the presence of the singular term.

Let us consider a smooth function $\xi: [0,1] \to \R$ such that
  \[\begin{cases}
    0 \leq \xi (x)  \leq 1, &  \text{for all } x \in [0,1], \\
    \xi (x) = 1 ,  &   x \in \omega', \\
    \xi (x)=0, &     x \in [\, 0, 1 ]\setminus \omega.
    \end{cases}\]
Since $v$ solves \eqref{h=01}, we have
\begin{equation}\label{ca}
    \begin{aligned}
    0 &= \int _0 ^T \frac{d}{dt} \left(\int _0 ^1 \xi ^2 e^{2s\varphi}
    v^2dx\right)dt
    =  \int_{Q_T}2s \xi ^2  \varphi _t e^{2s\varphi} v^2 + 2 \xi ^2
    e^{2s\varphi} vv_t dxdt
    \\
    &= 2 \int_{Q_T} \xi ^2 s\varphi _t e^{2s\varphi} v^2dxdt + 2 \int_{Q_T}\xi
    ^2e^{2s\varphi} v \left(-\lambda \frac{v}{b}-(a v_x)_x\right)dxdt
    \\
    &= 2 \int_{Q_T}\xi ^2 s\varphi _t e^{2s\varphi} v^2 dxdt -  2\lambda\int_{Q_T} \xi^2 e^{2s \varphi}\frac{v^2}{b} dx dt+2 \int_{Q_T}( \xi
    ^2e^{2s\varphi} v )_x a v_xdxdt.
\end{aligned}
    \end{equation}
If $\lambda <0$, then, differentiating the last term in \eqref{ca}, we get
\[
\begin{aligned}
2\int_{Q_T}\xi^2e^{2s\vp}a(v_x)^2dxdt&=2\lambda\int_{Q_T} \xi^2 e^{2s \varphi}\frac{v^2}{b} dx dt-2 \int_{Q_T}\xi ^2 s\varphi _t e^{2s\varphi} v^2 dxdt\\
&-2\int_{Q_T}(\xi^2e^{2s\vp})_xavv_xdxdt\\
&\leq -2 \int_{Q_T}\xi ^2 s\varphi _t e^{2s\varphi} v^2 dxdt-2\int_{Q_T}(\xi^2e^{2s\vp})_xavv_xdxdt,
\end{aligned}
\]
and then one can proceed as for the proof of \cite[Proposition 4.2]{fm}, obtaining the claim.

Otherwise, if $\lambda >0$, fixed $\ve>0$, by the Cauchy--Schwarz inequality, we have for $w=\xi e^{s\varphi}v$
    \[
    \begin{aligned}
  \int_0^1 \xi^2 e^{2s \varphi}\frac{v^2}{b} dx &\le  C^*\int_0^1 a (w_x)^2 dx\\
    & \leq C_\ve\int_0^1 a [(\xi e^{s\varphi})_x]^2 v^2 dx +\ve\int_0^1 \xi^2e^{2s\varphi} a(v_x)^2 dx
    \end{aligned}
    \]
for some $C_\ve>0$. Moreover,
\[
[(\xi e^{s\varphi})_x]^2 \le C\chi_\omega(e^{2s\varphi}+ s^2 (\varphi_x)^2e^{s\varphi}) \le C\chi_\omega\left(1+ \frac{1}{a}\right)
    \]
for some positive constant $C$. Indeed, $e^{2s\vp}<1$, while $s^2(\vp_x)^2e^{2s\vp}$ can be
estimated with
\[
\frac{c}{(-\max \Upsilon)^2}(\Upsilon_x)^2 \leq \frac{c}{a}
\]
by \eqref{stimayx}, for some constants $c>0$.
Thus
    \begin{equation}\label{ca1}
    \begin{aligned}
2\lambda  \int_{Q_T} \xi^2 e^{2s \varphi}\frac{v^2}{b} dxdt
    & \le  2\lambda C_{\ve}\int_{Q_T} a [(\xi e^{s\varphi})_x]^2 v^2 dxdt\\
    &+  2\lambda\ve\int_{Q_T} \xi^2e^{2s\varphi} a(v_x)^2 dxdt\\
    &\le  C\int_0^T\int_\omega v^2 dxdt+ 2\lambda\ve\int_{Q_T} \xi^2e^{2s\varphi} a(v_x)^2 dxdt,
    \end{aligned}
    \end{equation}
for a positive constant $C$ depending on $\ve$.
Hence,  differentiating the last term in \eqref{ca} and using \eqref{ca1}, we get
\[
\begin{aligned}
2\int_{Q_T}\xi^2e^{2s\vp}a(v_x)^2dxdt&=2\lambda\int_{Q_T} \xi^2 e^{2s \varphi}\frac{v^2}{b} dx dt-2 \int_{Q_T}\xi ^2 s\varphi _t e^{2s\varphi} v^2 dxdt\\
&-2\int_{Q_T}(\xi^2e^{2s\vp})_xavv_xdxdt\\
&\leq  C\int_0^T\int_\omega  v^2 dxdt+ 2\lambda\ve\int_{Q_T} \xi^2e^{2s\varphi} a(v_x)^2 dxdt\\
&-2 \int_{Q_T}\xi ^2 s\varphi _t e^{2s\varphi} v^2 dxdt-2\int_{Q_T}(\xi^2e^{2s\vp})_xavv_xdxdt.
\end{aligned}
\] 
Thus, applying again the Cauchy-Schwarz inequality, we get
\[
\begin{aligned}
&(2-  2\lambda\ve)\int_{Q_T}\xi^2e^{2s\vp}a(v_x)^2dxdt\le C\int_0^T\!\!\!\int_\omega v^2 dxdt-2 \int_{Q_T}\!\!\!\!\xi ^2 s\varphi _t e^{2s\varphi} v^2 dxdt\\
&-2\int_{Q_T}(\xi^2e^{2s\vp})_xavv_xdxdt\\
&
 \le C\int_0^T\int_\omega v^2 dxdt- 2 \int_0^T \int_\omega \xi ^2 s\varphi _t e^{2s\varphi} v^2dxdt
    +2\ve \int_0^T \int_\omega\left( \sqrt{a} \xi e^{s\varphi} v_x \right) ^2dxdt
 \\
 &+ D_\ve\int_0^T \int_\omega\left( \sqrt{a} \frac{( \xi ^2e^{2s\varphi} )_x}{\xi
    e^{s\varphi} }v \right)^2dxdt
    \\
   & = C\int_0^T \int_\omega v^2 dxdt- 2 \int_0^T \int_\omega \xi ^2 s\varphi _t e^{2s\varphi} v^2dxdt+ 2\ve\int_0^T \int_\omega \xi ^2 e^{2s\varphi} a (v_x)^2 dxdt\\
   &
    +D_\ve \int_0^T \int_\omega  \frac{[( \xi ^2e^{2s\varphi} )_x]^2}{\xi^2
    e^{2s\varphi} }av^2 dxdt
    \end{aligned}
    \]
for some $D_\ve>0$. Hence,
    \[\begin{aligned}
&2(1-\ve-\lambda \ve)\int_0^T \int_\omega\xi ^2 e^{2s\varphi} a (v_x)^2dxdt \le C\int_0^T\int_\omega v^2 dxdt\\
 &- 2 \int_0^T \int_\omega \xi ^2 s\varphi _t
e^{2s\varphi} v^2dxdt
 +D_\ve \int_0^T \int_\omega  \frac{[( \xi ^2e^{2s\varphi} )_x]^2}{\xi^2
    e^{2s\varphi} }av^2 dxdt.
    \end{aligned}  \]
Since $x_0 \not \in \bar \omega ' $, then
    \[ \begin{aligned}
&2(1-\ve-\lambda \ve)\inf_{\omega '}a(x)\int_0^{T}\int _{\omega '}
e^{2s\varphi} (v_x)^2dxdt\\
& \le 2(1-\ve-\lambda \ve)\int_0^T
\int_{\bar\omega'} \xi ^2 e^{2s\varphi} a (v_x)^2dxdt\\&\le
2(1-\ve-\lambda \ve)\int_0^T \int_\omega \xi ^2 e^{2s\varphi} a
(v_x)^2dxdt\\&\le C\int_0^T\int_\omega v^2 dxdt- 2 \int_0^T \int_\omega \xi ^2 s\varphi_t
e^{2s\varphi} v^2dxdt
   + D_\ve\int_0^T \int_\omega  \frac{[( \xi ^2e^{2s\varphi} )_x]^2}{\xi^2
    e^{2s\varphi} }av^2 dxdt.
    \end{aligned}
\]

Finally, we show that there exists  a positive constant $C$ (still depending on $\ve$) such
that
\[
\begin{aligned}
- 2& \int_0^T \int_\omega \xi ^2 s\varphi_t e^{2s\varphi} v^2dxdt
   +D_\ve\int_0^T \int_\omega  \frac{[( \xi ^2e^{2s\varphi} )_x]^2}{\xi^2
    e^{2s\varphi} }av^2 dxdt\\
 &  \le C\int  _0 ^T \int _{\omega} v^2 dxdt,
   \end{aligned}
\]
so that the claim will follow. Indeed,
\[
|s\vp_te^{2s\varphi}|\leq c\frac{1}{s_0^{1/4}(-\max
\Upsilon)^{1/4}},
\]
$|\dot \Theta|\leq c \Theta^{5/4}$ and
\[
|s\vp_te^{2s\varphi}|\leq cs(-\Upsilon)\Theta^{5/4}e^{2s\vp}\leq
\frac{c}{\big(s(-\Upsilon)\big)^{5/4}}
\]
for some constants $c>0$ which may vary at every step.

On the other hand, $\displaystyle\frac{[( \xi ^2e^{2s\varphi}
)_x]^2}{\xi^2 e^{2s\varphi} }$ can be estimated by
\[
C\big(e^{2s\vp}+s^2(\vp_x)^2e^{2s\vp}\big)\chi_\omega,
\]
and proceeding as before, we get the claim, choosing $\ve$ small enough, namely $\ve<(1+\lambda)^{-1}$.
\end{proof}

We shall also use the following
\begin{Lemma}\label{lemma3}
Assume Hypotheses $\ref{ipotesiomega}$ and $\ref{Ass04}$. Then there exist two positive
constants $C$ and $s_0$ such that every solution $v \in \cal W$ of
\eqref{h=01} satisfies, for all $s \ge s_0$,
\[
\int_{Q_T}\left( s \Theta a (v_x)^{2} + s^3 \Theta ^3
\frac{(x-x_0)^2}{a} v^{2}\right) e^{{2s\varphi}}  dxdt\le C
\int_0^T\int_\omega v^{2} dxdt.
\]
Here $\Theta$ and $\varphi$ are as in \eqref{c_1} with $c_1$ sufficiently large.
    \end{Lemma}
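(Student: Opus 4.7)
The plan is to apply the degenerate/singular Carleman estimate of Theorem \ref{Cor1} directly to $v$ and then control the resulting boundary terms at $x=0$ and $x=1$ via a localization procedure: cut-off functions reduce the problem to the non-degenerate non-singular classical Carleman estimate of Proposition \ref{classical Carleman} on sub-intervals of $(0,1)$ away from $x_0$, together with the Caccioppoli inequality of Proposition \ref{caccio}. Since $v\in\mathcal{W}\subset\mathcal{V}$ solves \eqref{h=01}, Theorem \ref{Cor1} applied with $h=0$ gives, for every $s\ge s_0$,
\[
\int_{Q_T}\!\!\left(s\Theta a(v_x)^2 + s^3 \Theta^3 \frac{(x-x_0)^2}{a} v^2\right)\! e^{2s\varphi}dxdt \;\le\; C\, sc_1 \!\int_0^T\!\! \Bigl[a\Theta (x-x_0) e^{2s\varphi}(v_x)^2\Bigr]_{x=0}^{x=1}\!dt,
\]
and both boundary traces enter the right-hand side with a positive coefficient ($x_0>0$ at $x=0$ and $1-x_0>0$ at $x=1$). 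It thus suffices to bound each of them by $C\int_0^T\int_\omega v^2 dxdt$.

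To treat, say, the trace at $x=1$, I would fix a smooth cut-off $\zeta:[0,1]\to[0,1]$ equal to $1$ in a neighborhood of $x=1$, identically zero in a neighborhood of $x_0$, and with $\supp(\zeta_x)$ contained in $\omega$ (in Case (i) of Hypothesis \ref{ipotesiomega}). Then $w:=\zeta v$ solves, on $\supp(\zeta)\subset(x_0,1]$ where $a$ and $1/b$ are bounded, a uniformly parabolic, non-singular equation of the form $w_t+(aw_x)_x+(\lambda/b)w = (a\zeta_x)_x v + 2a\zeta_x v_x$, whose right-hand side is supported in $\supp(\zeta_x)\subset\omega$, and satisfies Dirichlet conditions at both endpoints of $\supp(\zeta)$. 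Applying Proposition \ref{classical Carleman} to $w$ with a weight $\tilde\varphi$ chosen to match $\varphi$ at $x=1$ shows that the boundary trace $\int_0^T a\Theta e^{2s\varphi(t,1)}(v_x(t,1))^2 dt$ is dominated by $C\int_0^T\int_{\supp(\zeta_x)}(v^2+v_x^2)dxdt$. A symmetric cut-off near $x=0$ handles the other boundary trace. Finally, the Caccioppoli inequality of Proposition \ref{caccio}, applied on a slightly larger subinterval of $\omega$ (still away from $x_0$), allows one to replace the $v_x^2$ term by $v^2$, concluding the desired bound by $C\int_0^T\int_\omega v^2 dxdt$.

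The main obstacle is Case (ii) of Hypothesis \ref{ipotesiomega}, in which $\omega$ sits on only one side of $x_0$: the boundary trace on the opposite side of $x_0$ from $\omega$ cannot be controlled by a cut-off whose derivative support is contained in $\omega$. The natural workaround is to let $\supp(\zeta_x)$ extend into a small one-sided neighborhood of $x_0$, which generates an additional source term supported near $x_0$ rather than in $\omega$; this extra contribution is then absorbed into the left-hand side of Theorem \ref{Cor1}. Such an absorption succeeds precisely because the freedom in choosing $c_1$ large in \eqref{c_1} amplifies the factors in front of $s\Theta a (v_x)^2$ and $s^3\Theta^3(x-x_0)^2 v^2/a$ in the Carleman estimate (via $\varphi = \Theta \psi$ with $\psi \propto c_1$), so that for $c_1$ sufficiently large the near-$x_0$ perturbative terms are dominated by the left-hand side. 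This is the origin of the condition ``$c_1$ sufficiently large'' in the statement.
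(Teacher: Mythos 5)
Your handling of case (i) of Hypothesis \ref{ipotesiomega} is broadly viable and uses the same ingredients as the paper (Proposition \ref{classical Carleman} away from $x_0$, Proposition \ref{caccio}, and a comparison of the two weights obtained by taking $c_1$ large), but it is organized differently from the proof the paper invokes (\cite[Lemmas 5.1 and 5.2]{fm1}): there one applies Theorem \ref{Cor1} to a cut-off of $v$ whose support is a compact subset of $(0,1)$, so that the boundary terms vanish identically, and treats the pieces near $x=0,1$ with the nondegenerate estimate, rather than estimating the traces of $(v_x)^2$ at $x=0,1$. Two points in your case (i) scheme need repair, though both are fixable: Proposition \ref{classical Carleman} as stated produces a favorably signed boundary term only at the right endpoint $x=B$, so the trace at $x=0$ requires the mirrored weight (with $\int_A^x$ in place of $\int_x^B$ in \eqref{c_1nd}); and the commutator terms must be kept with their exponential weight, since Proposition \ref{caccio} controls only $\int_0^T\int_{\omega'}(v_x)^2e^{2s\varphi}\,dxdt$ and not the unweighted gradient integral you wrote.

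The genuine gap is case (ii). Your absorption mechanism cannot work, and the role you assign to $c_1$ is backwards: $c_1$ enters the left-hand side of Theorem \ref{Cor1} only through $\varphi=\Theta\psi$ with $\psi<0$ proportional to $c_1$, so enlarging $c_1$ makes $e^{2s\varphi}$, hence the left-hand side, exponentially \emph{smaller} near $x_0$; it does not amplify the coefficients of $s\Theta a(v_x)^2$ and $s^3\Theta^3(x-x_0)^2v^2/a$. Moreover, the extra commutator terms created by letting $\supp\zeta_x$ reach a one-sided neighborhood of $x_0$ carry no degenerate factor and are weighted by the nondegenerate weight $e^{2s\Phi}$, whereas on that region the left-hand side only offers the factors $s\Theta a\,e^{2s\varphi}$ and $s^3\Theta^3(x-x_0)^2e^{2s\varphi}/a$; once $c_1$ is large one has $\Phi-\varphi=\Theta(\rho-\psi)>0$ there, so $e^{2s(\Phi-\varphi)}\to+\infty$ as $t\to0^+,T^-$, and no choice of $s$ or $c_1$ permits the absorption. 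The actual reason $c_1$ must be large is only the comparison $\psi\le\rho$, i.e. $e^{2s\varphi}\le e^{2s\Phi}$, needed to convert the estimates with the nondegenerate weight into estimates with the weight $\varphi$ of the statement. The one-sided case is handled in the cited proof by a reflection of the problem across $x_0$ (working with the even reflections of $a$, $b$ and $v$), so that the transition region of the cut-off on the far side of $x_0$ falls in the mirror image of $\omega$ and, after the change of variables $x\mapsto 2x_0-x$, its contribution is again $\int_0^T\int_\omega v^2\,dxdt$. Without this step, or a substitute for it, your argument does not prove the lemma when $x_0\notin\bar\omega$.
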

Using the following non degenerate classical Carleman estimate, one has that the proof of the previous lemma is a simple adaptation of the proof of \cite[Lemma
5.1 and 5.2]{fm1}, to which we refer, also to explain why $c_1$ must be large.
\begin{Proposition}[\bf Nondegenerate nonsingular Carleman estimate]\label{classical
Carleman}
Let $z$ be the solution of
\begin{equation}
    \label{eq-z*}
    \begin{cases}
      z_t + (a  z_x) _x + \lambda \displaystyle \frac{z}{b}= h\in L^2\big((0, T)\times (A,B)\big),
       \\
    z(t,A)= z(t,B)=0, \; t \in (0,T),
    \end{cases}
    \end{equation}
where $b \in C\big([A,B]\big)$ is such that $b \ge b_0 >0$ in $[A, B]$
 and $a$ satisfies
\begin{itemize}
\item[$(a_1)$] $a\in W^{1,1}(A,B)$, $a\geq
a_0>0$ in $(A,B)$ and there exist two functions $\fg \in L^1(A,B)$,
$\fh \in W^{1,\infty}(A,B)$ and two strictly positive constants
$\fg_0$, $\fh_0$ such that $\fg(x) \ge \fg_0$ for a.e. $x$ in $[A,B]$ and
\[-\frac{a'(x)}{2\sqrt{a(x)}}\left(\int_x^B\fg(t) dt + \fh_0 \right)+ \sqrt{a(x)}\fg(x) =\fh(x)\quad \text{for a.e.} \; x \in [A,B];\]
or
\item[$(a_2)$] $a\in W^{1,\infty}(A,B)$ and $a\geq
a_0>0$ in $(A,B)$.
\end{itemize}

Then, for all $\lambda\in \R$, there exist three positive constants $C$, $r$ and $s_0$
such that for any $s>s_0$
\begin{equation}\label{carcorretta}
\begin{aligned}
\int_0^T\int_A^B \left(s\Theta (z_x)^2 + s^3 \Theta^3
 z^2\right)e^{2s\Phi}dxdt\le C\left(\int_0^T\int_A^B h^{2}e^{2s\Phi}dxdt -
(\mbox{B.T.})
\right),
\end{aligned}
\end{equation}
where
\[
(\mbox{B.T.}) = \begin{cases} \ds sr \int_0^{T}
\left[a^{3/2}e^{2s\Phi}\Theta \left(\int_x^B \fg(\tau) d\tau + \fh_0
\right) (z_x)^2\right]^{x=B}_{x=A}dt,  & \text{if (a$_1$) holds,}\\
\ds sr\int_0^T\left[ae^{2s\Phi}\Theta e^{r\zeta}(v_x)^2
\right]_{x=A}^{x=B}dt,  & \text{if (a$_2$) holds.}
\end{cases}
\]
Here the function $\Phi$ is defined as $\Phi(t,x):
=\Theta(t)\rho(x)$, where
$
\displaystyle \Theta$ is as in \eqref{c_1},
\begin{equation}\label{c_1nd}
\rho(x):=
\displaystyle\begin{cases} \displaystyle - r\left[\int_A^x
\frac{1}{\sqrt{a(t)}} \int_t^B
\fg(s) dsdt + \int_A^x \frac{\fh_0}{\sqrt{a(t)}}dt\right] -\mathfrak{c}, &\text{if (a$_1$) holds,}\\
\displaystyle  e^{r\zeta(x)}-\mathfrak{c}, &\text{if (a$_2$) holds,}\end{cases}
\end{equation}
and
 \[
\zeta(x)=\mathfrak{d}\int_x^B\frac{1}{a(t)}dt.
\]
Here $\fd=\|a'\|_{L^\infty(A,B)}$ and $\mathfrak{c}>0$ is
chosen in the second case in such a way that $\displaystyle
\max_{[A,B]} \rho<0$.
\end{Proposition}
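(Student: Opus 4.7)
The plan is to absorb the zero-order term $\lambda z/b$ as a right-hand side perturbation (using $b\ge b_0>0$) and then establish the Carleman estimate for the non-degenerate parabolic operator $z_t+(az_x)_x$ via the Fursikov--Imanuvilov conjugation method, tailored to the low regularity of $a$. Concretely, set $\tilde h := h - \lambda z/b$, so that $\|e^{s\Phi}\tilde h\|_{L^2}^2 \le 2\|e^{s\Phi}h\|^2_{L^2} + 2(\lambda/b_0)^2\|e^{s\Phi}z\|^2_{L^2}$; once the target inequality is proved with $\tilde h$ in place of $h$, the extra $\|e^{s\Phi}z\|^2$ term is absorbed by the $s^3\int\Theta^3 z^2 e^{2s\Phi}$ summand on the left, since $\Theta$ is bounded below on compact subintervals of $(0,T)$ and $s_0$ can be chosen large.

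Next, set $w:=e^{s\Phi}z$ and split the conjugated operator $e^{s\Phi}L(e^{-s\Phi}w)=L_s^+ w+L_s^- w$ with
\[
L_s^+ w := (aw_x)_x - s\Phi_t w + s^2 a\Phi_x^2 w, \qquad L_s^- w := w_t - 2sa\Phi_x w_x - s(a\Phi_x)_x w,
\]
i.e., exactly the splitting of Section \ref{sec3} without the singular-potential contribution. The identity $2\langle L_s^+ w, L_s^- w\rangle + \|L_s^+ w\|^2 + \|L_s^- w\|^2 = \|e^{s\Phi}\tilde h\|^2$ reduces the problem to bounding $\langle L_s^+ w,L_s^- w\rangle$ from below. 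Integration by parts in $t$ and $x$, mimicking Lemma \ref{lemma1}, produces dominant distributed terms of the shape
\[
s\int\bigl(2a^2\Phi_{xx}+aa'\Phi_x\bigr)(w_x)^2\,dxdt + s^3\int a\Phi_x^2\bigl(2a\Phi_{xx}+a'\Phi_x\bigr)w^2\,dxdt,
\]
plus lower-order $\Phi_{tt}$, $\Phi_{tx}$ contributions and boundary pieces. The weight $\rho$ is chosen precisely to make both coefficients positive and of the right order. In case $(a_2)$, with $\rho=e^{r\zeta}-\fc$ and $\zeta'=-\fd/a$, the classical two-parameter argument (fix $r$ large, then take $s$ large) yields the desired lower bound $cs\int\Theta a(w_x)^2 + cs^3\int\Theta^3 w^2$. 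In case $(a_1)$, the identity \eqref{5.3'} is invoked to express $(a\rho')'$ through $\sqrt a\,\fg-\fh$, which lies in $L^\infty_{\rm loc}$; the lower bound $\fg\ge\fg_0>0$ then secures the positivity of both coefficients.

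The chief obstacle is the non-smoothness of $a$ in case $(a_1)$: since $a'\in L^1$ only, the integration-by-parts scheme must be arranged so that no expression requiring $a''$ ever appears, and the specific ansatz $a\rho' = -r\sqrt a\bigl(\int_x^B\fg + \fh_0\bigr)$, combined with \eqref{5.3'}, is exactly what allows this — this is why $\fg$ and $\fh$ are introduced. After the lower bound is established, the cross-product boundary contributions vanish at $t=0,T$ thanks to $w(0,\cdot)=w(T,\cdot)=0$, and those in $x$ collapse (using $w(\cdot,A)=w(\cdot,B)=0$) to the single surviving term $\int_0^T[-s\Phi_x(aw_x)^2]^{x=B}_{x=A}dt$; substituting the explicit $\Phi_x$ in each case gives exactly the boundary term (B.T.) stated in \eqref{carcorretta}. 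Finally, returning to $z$ via $z_x = e^{-s\Phi}(w_x - s\Phi_x w)$ and absorbing the lower-order pointwise pieces into the $s^3\Theta^3 z^2 e^{2s\Phi}$ summand completes the proof.
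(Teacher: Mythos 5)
Your overall strategy is sound and, in its first step, coincides with the paper's: the paper also rewrites the equation as $z_t+(az_x)_x=\bar h$ with $\bar h:=h-\lambda z/b$ and treats the potential as a bounded perturbation, which is legitimate precisely because $b\ge b_0>0$. The difference lies in how the Carleman estimate for $z_t+(az_x)_x$ with the non-smooth coefficient $a$ is obtained: the paper does not redo the conjugation machinery at all, but simply invokes \cite[Theorem 3.1]{fm1}, which is exactly \eqref{carcorretta} with $\bar h$ in place of $h$, including the weights \eqref{c_1nd} and the boundary terms, whereas you sketch a re-derivation of that result via the Fursikov--Imanuvilov splitting $L_s^{+}$, $L_s^{-}$. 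Your sketch does identify the right structural points (in case $(a_1)$ the ansatz $a\rho'=-r\sqrt{a}\,\big(\int_x^B\fg\,d\tau+\fh_0\big)$ combined with the identity in $(a_1)$, cf.\ \eqref{5.3'}, gives $(a\rho')'=r\fh\in L^\infty$, so neither $a''$ nor $a'\in L^\infty$ is ever needed; in case $(a_2)$ the two-parameter choice of $r$ and then $s$), but it remains an outline: the low-regularity integrations by parts and the verification that the coefficients of $(w_x)^2$ and $w^2$ are positive and of the stated order --- which is precisely the content of the cited theorem --- are asserted rather than carried out, so your argument is complete only modulo that known result, which the paper handles by citation. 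On the perturbation step the two treatments differ slightly: the paper bounds $\lambda^2\int z^2b^{-2}e^{2s\Phi}$ via the classical Poincar\'e inequality applied to $w=e^{s\Phi}z$ and then absorbs $\tfrac{s}{2}\Theta(z_x)^2+\tfrac{s^3}{2}\Theta^3z^2$ into the left-hand side, while you absorb $\int z^2e^{2s\Phi}$ directly into $s^3\int\Theta^3z^2e^{2s\Phi}$; your shortcut is fine, but the correct justification is that $\Theta=[t(T-t)]^{-4}\ge (4/T^2)^4>0$ on \emph{all} of $(0,T)$ --- boundedness below only on compact subintervals, as you wrote, would not suffice since the integral runs over the whole cylinder $Q_T$ --- so that for $s\ge s_0$ large the perturbation is dominated everywhere and can be absorbed.
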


\begin{proof}
     Rewrite the equation of
    \eqref{eq-z*} as $ z_t + (az_x)_x = \bar{h}, $ where $\bar{h}
    := h - \lambda \displaystyle \frac{z}{b} $. Then, applying \cite[Theorem 3.1]{fm1}, there exist
two positive constants $C$ and $s_0 >0$, such that
    \begin{equation}\label{fati1_c}
    \begin{aligned}
 \int_0^T\int_A^B \left(s\Theta (z_x)^2 + s^3 \Theta^3
 z^2\right)e^{2s\Phi}dxdt\le C\left(\int_0^T\int_A^B {\bar h}^{2}e^{2s\Phi}dxdt -
\text{(B.T.)}\right),
    \end{aligned}
    \end{equation}
    for all $s \ge s_0$.
    Using the definition of $\bar{h}$,
    the term $\ds\int_0^T\int_A^B e^{2s\Phi}\bar{h}^2
    dxdt$ can be estimated in the following way:
    \begin{equation}\label{4_c}
    \begin{aligned}
    \int_0^T\int_A^B  \bar{h}^2e^{2s\Phi}dxdt \le
    2\int_0^T\int_A^B h^2 e^{2s\Phi}dxdt
   +2\lambda^2\int_0^T\int_A^B
    \frac{z^2}{b^2} e^{2s\Phi}dxdt.
    \end{aligned}
    \end{equation}
Applying the classical Poincar\'{e} inequality to $w(t,x) :=
e^{s\Phi} z(t,x)$ and observing that
$0<\inf \Theta\leq \Theta\leq c \Theta^2$, one has
    \[
    \begin{aligned}
 2\lambda^2 \int_0^T  \int_A^B \frac{z^2}{b^2}  e^{2s\Phi}dxdt &= 2\lambda^2\int_0^T \int_A^B
\frac{w^2}{b^2} dxdt\le 2\frac{\lambda^2}{b_0^2}C \int_0^T\int_A^B  (w_x)^2 dxdt
\\&\leq C \int_0^T\int_A^B  (s^2\Theta^2  z^2+(z_x)^2 )e^{2s\Phi} dxdt \\
&\le \int_0^T\int_A^B \frac{s}{2} \Theta (z_x)^2e^{2s\Phi} dxdt+\int_0^T\int_A^B \frac{s^3}{2}\Theta^3 z^2e^{2s\Phi} dxdt,
 \end{aligned}\]
for $s$ large enough.
    Using this last inequality in (\ref{4_c}), we have
    \begin{equation}\label{fati2_c}
    \begin{aligned}
   \int_0^T\int_A^B  \bar{h}^2e^{2s\Phi}dxdt&\le
2\int_0^T\int_A^B e^{2s\Phi} h^2dxdt
   +
  \int_0^T\int_A^B \frac{s}{2} \Theta (z_x)^2e^{2s\Phi}
dxdt\\&+\int_0^T\int_A^B \frac{s^3}{2}\Theta^3 z^2e^{2s\Phi} dxdt.
    \end{aligned}
    \end{equation}
    Using this inequality
    in (\ref{fati1_c}), \eqref{carcorretta} follows immediately.
\end{proof}

In order to prove Proposition \ref{obser.}, the last result that we need is the following:
\begin{Lemma}\label{obser.regular}
Assume Hypotheses $\ref{ipotesiomega}$ and $\ref{Ass04}$. Then there exists a
positive constant $C_T$ such that every solution $v \in \cal W$ of
\eqref{h=01} satisfies
\[
\int_0^1v^2(0,x) dx \le C_T\int_0^T \int_{\omega}v^2(t,x)dxdt.
\]
  \end{Lemma}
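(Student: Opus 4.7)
The plan is to combine a monotonicity property in time for $\|v(t,\cdot)\|_{L^2(0,1)}$ with the global estimate of Lemma \ref{lemma3}, bridging the two via the Hardy--Poincar\'e--type inequality \eqref{1}.

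First I would establish the time monotonicity. Since $v\in{\cal W}\subset C^1\big([0,T];H^2_{a,b}(0,1)\big)$, every section $v(t,\cdot)$ lies in $D(A)\subset {\cal H}$ (see Remark \ref{aul2}), so testing $v_t+Av=0$ against $v$ in $L^2(0,1)$ is legitimate. Lemma \ref{green} and Proposition \ref{eq} would then yield
\[
\frac{1}{2}\frac{d}{dt}\int_0^1 v^2(t,x)\,dx=-\langle Av(t,\cdot),v(t,\cdot)\rangle_{L^2(0,1)}\ge \Lambda\int_0^1 a(x)(v_x)^2(t,x)\,dx\ge 0,
\]
so that $t\mapsto \|v(t,\cdot)\|_{L^2(0,1)}^2$ is non decreasing. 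Integrating the resulting inequality $\|v(0,\cdot)\|_{L^2(0,1)}^2\le \|v(t,\cdot)\|_{L^2(0,1)}^2$ on $[T/4,3T/4]$ produces
\[
\frac{T}{2}\int_0^1 v^2(0,x)\,dx\le \int_{T/4}^{3T/4}\int_0^1 v^2(t,x)\,dx\,dt.
\]

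Next I would pass from the $L^2$-norm on this middle time window to the weighted energy controlled by Lemma \ref{lemma3}. Since $v(t,\cdot)\in{\cal H}$ and, under each of Hypotheses \ref{Ass0}, \ref{Ass0_1} and \ref{Ass01_1}, the function $b$ is continuous on $[0,1]$ (hence bounded), Proposition \ref{PropH} gives
\[
\int_0^1 v^2(t,x)\,dx\le \|b\|_{L^\infty(0,1)}\int_0^1 \frac{v^2(t,x)}{b(x)}\,dx\le C\int_0^1 a(x)(v_x)^2(t,x)\,dx.
\]
Moreover, on $[T/4,3T/4]$ the function $\Theta$ is pinched between two positive constants depending only on $T$, and since $-c_1c_2\le \psi<0$, the weight $e^{2s\vp}=e^{2s\Theta\psi}$ is bounded below by a positive constant depending only on $s$, $c_1$, $c_2$ and $T$. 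Hence
\[
\int_{T/4}^{3T/4}\int_0^1 a(x)(v_x)^2(t,x)\,dx\,dt\le C\int_{Q_T} s\Theta\,a(v_x)^2 e^{2s\vp}\,dx\,dt,
\]
and Lemma \ref{lemma3} lets me majorise the right--hand side by $C\int_0^T\int_\omega v^2\,dx\,dt$. Chaining the three inequalities would give the observability estimate with $C_T=2C/T$.

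The main obstacle I expect is not in the present argument, which is a short chain of inequalities, but rather in the fact that Lemma \ref{lemma3} already had to absorb the boundary term appearing in Theorem \ref{Cor1} by means of a cut--off procedure that relies on the non degenerate Carleman estimate of Proposition \ref{classical Carleman} and on the Caccioppoli--type inequality of Proposition \ref{caccio}. Once that machinery is in place, the only delicate point is to justify that both the Hardy--Poincar\'e inequality \eqref{1} and the energy identity apply to $v(t,\cdot)$ pointwise in $t$, which is ensured by $v\in{\cal W}\subset C^1([0,T];D(A))\subset C^1([0,T];{\cal H})$.
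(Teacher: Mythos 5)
Your proof is correct, but it takes a genuinely different route from the paper's. The paper multiplies the equation by $v_t$, so its monotone quantity is the energy $t\mapsto\int_0^1 a(v_x)^2\,dx-\lambda\int_0^1 v^2/b\,dx$; after integrating over $[T/4,3T/4]$ and invoking Lemma \ref{lemma3} it first bounds $\int_0^1 a(v_x)^2(0,x)\,dx$ (with a case distinction on the sign of $\lambda$: for $\lambda>0$ the term $\lambda\int_0^1 v^2(0,x)/b\,dx$ is absorbed via \eqref{1}, which is where $\lambda C^*<1$ is used), and only then recovers the $L^2$ norm of $v(0,\cdot)$ through the weighted Hardy--Poincar\'e inequality of Proposition \ref{HP} with the ad hoc weight $p=(a|x-x_0|^4)^{1/3}$ (or $p=|x-x_0|^{4/3}\max_{[0,1]}a^{1/3}$) together with the lower bound on $\left(a/(x-x_0)^2\right)^{1/3}$ coming from Lemma \ref{Lemma 2.1}. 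You instead test with $v$, so your monotone quantity is $\|v(t,\cdot)\|_{L^2(0,1)}^2$ (non decreasing thanks to Proposition \ref{eq}, which is where $\lambda<1/C^*$ enters for you), and on the middle time window you convert $\int_0^1 v^2(t,x)\,dx$ into $\int_0^1 a(v_x)^2(t,x)\,dx$ using the boundedness of $b$ and inequality \eqref{1} as in Remark \ref{rem1}, before applying Lemma \ref{lemma3}; the pointwise-in-time manipulations are legitimate exactly as you indicate, since ${\cal W}\subset C^1\big([0,T];H^2_{a,b}(0,1)\big)$ and $D(A)\subset{\cal H}$ by Remark \ref{aul2}. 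Both arguments rest on the same pillars (Lemma \ref{lemma3}, hence the Carleman and Caccioppoli machinery, and the condition $\lambda<1/C^*$), but yours is shorter: it avoids the case splitting on the sign of $\lambda$ and the final application of Proposition \ref{HP}. What the paper's route buys in exchange is the intermediate estimate $\int_0^1 a(v_x)^2(0,x)\,dx\le C\int_0^T\int_\omega v^2\,dx\,dt$, i.e.\ observability of the weighted energy at the initial time, which is stronger information than the $L^2$ bound your argument delivers.
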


  \begin{proof}
  Multiplying the equation of \eqref{h=01} by $v_t$ and integrating
  by parts over $(0,1)$, one has
\[
\begin{aligned}
&0 = \int_0^1\left(v_t+ (av_x)_x +\lambda \displaystyle
\frac{v}{b}\right)v_t dx= \int_0^1 \left(v_t^2+ (av_x)_xv_t +\lambda
\displaystyle \frac{vv_t}{b}\right)dx\\
& = \int_0^1v_t^2dx +
\left[av_xv_t \right]_{x=0}^{x=1} - \int_0^1av_xv_{tx} dx
+\frac{\lambda}{2}\frac{d}{dt}\int_0^1 \displaystyle
\frac{v^2}{b}dx\\
&= \int_0^1v_t^2dx -
\frac{1}{2}\frac{d}{dt}\int_0^1a(v_x)^2
+\frac{\lambda}{2}\frac{d}{dt}\int_0^1 \displaystyle \frac{v^2}{b}dx
 \\&\ge - \frac{1}{2}
\frac{d}{dt}\int_0^1 a(v_x)^2dx
+\frac{\lambda}{2}\frac{d}{dt}\int_0^1 \displaystyle
\frac{v^2}{b}dx.
\end{aligned}
\]
 Thus, the function $$t \mapsto
\displaystyle \int_0^1 a(v_x)^2 dx - \lambda\int_0^1
\frac{v^2}{b}dx$$ is non decreasing for all $t \in [0,T]$. In particular,
\[
\begin{aligned}
\int_0^1 a(v_x)^2(0,x)dx-\lambda\int_0^1
\frac{v^2(0,x)}{b(x)}dx & \le \int_0^1a(v_x)^2(t,x)dx -\lambda\int_0^1
\frac{v^2(t,x)}{b(x)}dx\\
&\text{(by Proposition \ref{PropH})}\\
& \le (1+ |\lambda| C^*)\int_0^1a(v_x)^2(t,x)dx.
\end{aligned}
\]
Integrating the previous inequality
over $\displaystyle\left[\frac{T}{4}, \frac{3T}{4} \right]$, $\Theta$ being
bounded therein, we find
\begin{equation}\label{stima2}
\begin{aligned}
&\int_0^1a(x)(v_x)^2(0,x) dx - \lambda\int_0^1
\frac{v^2(0,x)}{b(x)}dx\\
&\le \frac{2}{T}(1+ |\lambda|
C^*)\int_{\frac{T}{4}}^{\frac{3T}{4}}\int_0^1a(v_x)^2dxdt
\\&\le C_T
\int_{\frac{T}{4}}^{\frac{3T}{4}}\int_0^1s\Theta
a(v_x)^2e^{2s\varphi}dxdt.
\end{aligned}
\end{equation}
Hence, from the previous inequality and Lemma \ref{lemma3}, if $\lambda <0$
\[
\int_0^1a (v_x)^2(0,x) dx \le \int_0^1a(v_x)^2(0,x) dx -
\lambda\int_0^1 \frac{v^2(0,x)}{b(x)}dx\le C \int_0^T
\int_{\omega}v^2dxdt
\]
for some positive constant $C>0$.

If $\lambda >0$, using again Lemma \ref{lemma3} and
\eqref{stima2}, one has
\begin{equation}\label{chissà}
\int_0^1a(v_x)^2(0,x) dx - \lambda\int_0^1
\frac{v^2(0,x)}{b(x)}dx\le C \int_0^T \int_{\omega}v^2dxdt.
\end{equation}
Hence, by \eqref{1} and \eqref{chissà}, we have
\[
\begin{aligned}
\int_0^1a(v_x)^2(0,x) dx &\le \lambda\int_0^1
\frac{v^2(0,x)}{b(x)}dx + C \int_0^T \int_{\omega}v^2dxdt \\ &\le
\lambda C^* \int_0^1a(v_x)^2(0,x) dx + C \int_0^T
\int_{\omega}v^2dxdt.
\end{aligned}\]
Thus
\[
(1-\lambda C^* ) \int_0^1a(v_x)^2(0,x) dx \le C\int_0^T
\int_{\omega}v^2dxdt,
\]
 for a positive constant $C$. In every case, there exists $C >0$
 such that
 \begin{equation}\label{stum}
\int_0^1a(v_x)^2(0,x) dx \le C\int_0^T \int_{\omega}v^2dxdt.
 \end{equation}
 The Hardy- Poincar\'{e} inequality (see Proposition \ref{HP})
and \eqref{stum} imply that
\[
\begin{aligned}
\int_0^1 \left(\frac{a}{(x-x_0)^2}\right)^{1/3}v^2(0,x)dx &\le
\int_0^1 \frac{p}{(x-x_0)^2} v^2(0,x)dx  \\&\le C_{HP} \int_0^1
p(v_x)^2(0,x) dx \\&\le cC_{HP}  \int_0^1a(v_x)^2(0,x) dx \\
&\le C
\int_0^T\int_{\omega}v^2dxdt,
\end{aligned}
\]
for a positive constant $C$.  Here $p(x) = (a(x)|x-x_0|^4)^{1/3}$ if $K_1 > \displaystyle\frac{4}{3}$, while $p(x) =\ds |x-x_0|^{4/3}\max_{[0,1]}a^{1/3}$ otherwise, and $c, C$ are obtained by Lemma \ref{Lemma 2.1}.

Again by Lemma \ref{Lemma 2.1}, we have
\[\left(\frac{a(x)}{(x-x_0)^2}\right)^{1/3}\ge C_3:=\min\left\{\left(\frac{a(1)}{(1-x_0)^2}\right)^{1/3},
\left(\frac{a(0)}{x_0^2}\right)^{1/3}\right\} >0.
\]
Hence
\[
C_3\int_0^1v(0,x)^2dx \le C \int_0^T\int_{\omega}v^2dxdt\] and the
claim follows.
\end{proof} 

\begin{proof}[Proof of Proposition $\ref{obser.}$] It follows by a density argument as for the proof of
\cite[Proposition 4.1]{fm}.
\end{proof}

\end{document}